\newcommand{\pg}{\mathfrak{p}}
\newcommand{\R}{\mathbb{R}}
\def\comment#1 {{\color{red}(Comment: #1) }}
\def\smi{\hspace{-3pt}-\hspace{-3pt}}
\def\p{\operatorname{p}}
\def\H{\vec{H}}
\def\A{\vec{A}}
\def\v{\operatorname{v}}
\def\real     #1{{\mathbb R^{#1}}}
\def\natural  #1{{\mathbb N^{#1}}}
\def\mreg    {M_{\operatorname{reg}}}
\def\gind {\operatorname{g}}
\def\dt       {\frac{d}{dt}\,}
\newtheorem{theorem}{Theorem}[section]
\newtheorem{mythm}{Theorem}
\newtheorem{lemma}[theorem]{Lemma}
\newtheorem*{thma}{Theorem A}
\newtheorem*{thmb}{Theorem B}
\newtheorem*{thmc}{Theorem C}
\newtheorem*{thmd}{Theorem D}
\newtheorem{corollary}[theorem]{Corollary}
\newtheorem{definition}[theorem]{Definition}
\theoremstyle{definition}
\newtheorem{remark}[theorem]{Remark}
\def\pproof#1{\@ifnextchar[\opargproof
{\opargproof[\it Proof of #1.]}}
\def\opargproof[#1]{\par\noindent {\bf #1 }}
\numberwithin{equation}{section}
\begin{document}

\title[Translating solitons]{On the topology of translating solitons of the mean curvature flow}
\author[F. Mart\'in]{\textsc{F. Mart\'in}}
\author[A. Savas-Halilaj]{\textsc{A. Savas-Halilaj}}
\author[K. Smoczyk]{\textsc{K. Smoczyk}}
\address{Francisco Mart\'in\newline
Departmento de Geometr\'ia y Topolog\'ia\newline
Universidad de Granada\newline
18071 Granada, Spain\newline
{\sl E-mail address:} {\bf fmartin@ugr.es}
}
\address{Andreas Savas-Halilaj\newline
Institut f\"ur Differentialgeometrie\newline
Leibniz Universit\"at Hannover\newline
Welfengarten 1\newline
30167 Hannover, Germany\newline
{\sl E-mail address:} {\bf savasha@math.uni-hannover.de}
}
\address{Knut Smoczyk\newline
Institut f\"ur Differentialgeometrie\newline
and Riemann Center for Geometry and Physics\newline
Leibniz Universit\"at Hannover\newline
Welfengarten 1\newline
30167 Hannover, Germany\newline
{\sl E-mail address:} {\bf smoczyk@math.uni-hannover.de}
}

\date{}
\subjclass[2010]{Primary 53C44}
\keywords{Mean curvature flow, translating solitons, Gau\ss\ map}
\thanks{The first author is partially supported by MICINN-FEDER grant no. MTM2011-22547}
\thanks{The second author is is supported by the grant of E$\Sigma\Pi$A : PE1-417}

\begin{abstract}
In the present article we obtain classification results and topological obstructions for the existence of
translating solitons of the mean curvature flow in euclidean space.
\end{abstract}

\maketitle
%%%%%%%%%%%%%%%%%%%%%%%%%%%%%%%%%%%%%%%%%%%%%%%%%%%%%%%%%%%%%%%%%%%%%%%%%

\section{Introduction}
An oriented smooth hypersurface $f:M^m\to\real{m+1}$  is called {\it translating soliton}
(or a {\it translator} for short) of the
mean curvature flow if its mean curvature vector
field $\H$ satisfies
\begin{equation}\label{trans 1}
\H={\v}^\perp,
\end{equation}
where ${\v}\in\real{m+1}$ is a fixed unit length vector and $\v^\perp$ stands for the orthogonal
projection of ${\v}$ onto the normal bundle of the immersion $f$.
Translating solitons are important in the singularity theory of the mean curvature flow since they often occur
as Type-II singularities. On the other hand they also form interesting examples of precise solutions of the flow
since the smooth family of immersions $F:M^m\times\real{}\to\real{m+1}$,
$F(x,t):=f(x)+t{\v}$,
evolves, up to some tangential diffeomorphisms, by its mean curvature.
If one chooses a smooth unit length normal vector field $\xi$ along $f$, then equation
(\ref{trans 1}) may be expressed in terms of scalar quantities. More precisely, equation
(\ref{trans 1}) is equivalent to
\begin{equation}\label{trans 1b}
H:=-\langle\H,\xi\rangle=-\langle {\v},\xi\rangle,
\end{equation}
where here $H$ is the {\it scalar mean curvature} of $f$.
Since equation (\ref{trans 1}) is invariant under isometries one may, without loss of generality,
always assume that the velocity vector ${\v}$ is given by
${\v}=\operatorname{e}_{m+1},$
where $\{\operatorname{e}_1,\dots,\operatorname{e}_{m+1}\}$ denotes the standard orthonormal basis of $\real{m+1}$.

Translating solitons of the euclidean space $\real{m+1}$ are closely related to minimal hypersurfaces. In fact, translators can be regarded
as minimal hypersurfaces of $(\real{m+1},\operatorname{G})$ where $\operatorname{G}$ is a Riemannian metric conformal
to the usual inner product of $\real{m+1}$ (for more details see Section \ref{alexandrov}). However, at present there is no general method
to construct examples of translating solitons. Even in the $2$-dimensional case there is no Weierstra{\ss} type representation known to exist
for translators, like there is for minimal surfaces in $\real{3}$. Moreover, although it is
believed that there exists an abundance of translators, there are only a very few available examples of complete translating solitons in the euclidean space
$\real{m+1}.$ For instance, any minimal hypersurface of $\real{m+1}$ tangent to the translating direction $\v$ is a translating soliton (however, in this case $H\equiv 0$
implies that the translator actually does not move at all). The euclidean
product $\Gamma\times\real{m-1}$, where $\Gamma$ is the {\it grim reaper} in $\real{2}$ represented by the
immersion $f:(-\pi/2,\pi/2)\to\real{2}$ given by
$$f(x)=-\log\cos x,$$
gives rise again to a translating soliton. Any translating soliton that up to a rigid motion coincides with $\Gamma\times\real{m-1}$ will be called a {\it grim hyperplane}.
Another
way to construct complete translating solitons is by rotating special curves around the translating axis.
From this procedure one gets the
rotational symmetric {\it translating paraboloid} and the {\it translating catenoids} which are unique
up to rigid motions (see the examples in subsection \ref{examples}d). We would like to point out that
it is not known if there are examples of complete translating solitons with finite non-zero genus.

Our goal is to classify, under suitable conditions, translating solitons and to
obtain topological obstructions for the existence of translating solitons with finite non-zero genus in the euclidean space $\real{3}$.
In the second section, after setting up the notation and computing the basic equations that translators satisfy, we give the following
characterization of the grim hyperplanes.

\begin{thma}
Let $f:M^m\to\real{m+1}$ be a translating soliton which is not a minimal hypersurface.
Then $f(M^m)$ is a grim hyperplane if and only if the function $|A|^2H^{-2}$ attains a local maximum
on the open set $M^m-\{H=0\}$.
\end{thma}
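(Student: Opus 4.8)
The two implications will be handled separately. The forward one is immediate: if $f(M^m)$ is a grim hyperplane then, up to a rigid motion, $f$ is the product of the grim reaper $\Gamma$ with $\real{m-1}$, so the shape operator has a single non-zero principal curvature, equal to the (everywhere positive) scalar mean curvature of $\Gamma$; hence $|A|^2=H^2$ throughout $M^m$, the function $|A|^2H^{-2}$ is identically equal to $1$, and it attains a global, hence local, maximum. The content is the converse, which I would prove via a maximum principle for a drift Laplacian, followed by a rigidity argument.

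The first step is to derive, by differentiating the scalar soliton equation $H=-\langle\v,\xi\rangle$ and the relation $\v^{\top}=\v+H\xi$ and invoking the Gauss--Weingarten and Codazzi equations, the two first-order identities
\[
\nabla_i H=-A_{ij}\,\v^{\top}_j,\qquad \nabla_i\v^{\top}_j=H\,A_{ij},
\]
and then, using these together with the Simons identity for hypersurfaces of $\real{m+1}$, the two second-order relations
\[
\lap H+\langle\v^{\top},\nabla H\rangle=-|A|^2H,\qquad \lap A_{ij}+\langle\v^{\top},\nabla A_{ij}\rangle=-|A|^2A_{ij}.
\]
Setting $\mathcal L:=\lap+\langle\v^{\top},\nabla\,\cdot\,\rangle$, this gives $\mathcal L|A|^2=2|\nabla A|^2-2|A|^4$ and $\mathcal L(H^2)=2|\nabla H|^2-2|A|^2H^2$, and on the open set $\Omega:=M^m-\{H=0\}$ the quotient rule for $\mathcal L$ yields, after the $|A|^4$-terms cancel,
\[
\mathcal L q+\frac{2}{H}\langle\nabla H,\nabla q\rangle=\frac{2}{H^{4}}\bigl|H\nabla A-\nabla H\otimes A\bigr|^{2}=2\,\bigl|\nabla(A/H)\bigr|^{2}\ \geq\ 0,\qquad q:=|A|^2H^{-2}.
\]
Thus $q$ is a subsolution on $\Omega$ of the elliptic operator $\widetilde{\mathcal L}:=\lap+\langle\v^{\top}+2H^{-1}\nabla H,\nabla\,\cdot\,\rangle$, which has no zeroth-order term. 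Since $q$ attains an interior local maximum in $\Omega$, the strong maximum principle forces $q$ to be constant on the connected component of $\Omega$ containing that point and forces $\nabla(A/H)\equiv0$ there; i.e.\ $\hat A:=A/H$ is a parallel symmetric endomorphism with $\operatorname{trace}\hat A=1$.

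It remains to classify translators with $A/H$ parallel. The eigenvalues of $\hat A$ are constant and its eigendistributions are parallel, so $M^m$ splits locally as a Riemannian product; combining the Codazzi equation with this product structure and with the two first-order identities above---and using $H=-\langle\v,\xi\rangle$ to rule out any round-sphere factor, along which $\langle\v,\xi\rangle$ cannot be constant while $\H=\v^{\perp}$---one concludes that $\hat A$ has a single non-zero eigenvalue, necessarily equal to $1$ and of multiplicity one, so that $A=H\,E\otimes E$ for a parallel unit vector field $E$. Then $E^{\perp}$ is a parallel, totally geodesic distribution, whence $f(M^m)$ is a cylinder over an integral curve $\gamma$ of $E$ together with a flat $\real{m-1}$, and along $\gamma$ the equation $\H=\v^{\perp}$ reduces exactly to the ODE characterising the grim reaper. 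Finally, translators are real-analytic (the graphical equation is elliptic with analytic coefficients) and $q$ is real-analytic on $\Omega$; since $q$ is constant on a non-empty open set, unique continuation propagates the product structure to all of $M^m$, so $f(M^m)$ is a grim hyperplane.

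The main obstacle is the last step, not the maximum-principle mechanism: one must extract the full product decomposition from ``$A/H$ parallel'', eliminate the spherical factors using the soliton equation, integrate the first-order system along $\gamma$ to identify it as a grim reaper, and handle with some care the locus $\{H=0\}$ and the possible disconnectedness of $\Omega$---this last point being exactly where the real-analyticity of translating solitons is used.
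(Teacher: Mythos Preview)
Your proposal is correct and follows essentially the same route as the paper. You derive the identical drift-Laplacian identity
\[
\Delta q+\langle\nabla u,\nabla q\rangle+\tfrac{2}{H}\langle\nabla H,\nabla q\rangle=\tfrac{2}{H^{4}}\bigl|H\nabla A-\nabla H\otimes A\bigr|^{2}\ge 0,
\]
apply the strong maximum principle to force $H\nabla A=\nabla H\otimes A$, and then pass to the splitting via Codazzi, parallel distributions and de~Rham, finishing by real analyticity; the paper does the same, only organising the rigidity step as an explicit dichotomy ($H$ constant, where Lawson's theorem is invoked, versus $\nabla H\neq 0$, where Codazzi gives a single nonzero principal curvature) rather than phrasing it as ``$A/H$ parallel''.
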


As an immediate consequence of the above theorem we prove that a translating soliton with zero scalar
curvature either coincides with a grim hyperplane or with a minimal hypersurface tangential to the translating direction.

In the third section, we prove a uniqueness theorem for complete embedded translating solitons with a single end that are
asymptotic to a translating paraboloid. Our proof relies heavily on the Alexandrov's reflection principle. More precisely
we show the following.

\begin{thmb}\label{reflection}
Let $f:M^m\to\real{m+1}$ be a complete embedded translating soliton of the mean curvature flow
with finite genus and a single end that is smoothly asymptotic to a translating paraboloid. Then
$M=f(M^m)$ is a translating paraboloid.
\end{thmb}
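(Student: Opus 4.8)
The plan is to run Alexandrov's moving-plane method with horizontal hyperplanes (i.e., hyperplanes containing the translating direction $\v=\operatorname{e}_{m+1}$), exactly as in the classical proof that an embedded constant-mean-curvature sphere is round, but adapted to the quasilinear translator equation. First I would observe that the translator equation \eqref{trans 1b} is invariant under the reflection $R_P$ across any hyperplane $P$ parallel to $\operatorname{e}_{m+1}$: such a reflection fixes $\v$, so if $f$ is a translator then so is $R_P\circ f$. Hence on any such $P$ one may compare $M=f(M^m)$ with its reflected image and invoke the geometric maximum principle, since two translators touching at a point (interior or boundary, with the same tangent hyperplane) and lying locally on one side of each other must coincide near that point — this is the strong maximum principle / Hopf boundary-point lemma for the uniformly elliptic (after the conformal change described in Section~\ref{alexandrov}) minimal-surface-type operator governing translators.

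The second step is to set up the sweep. Fix a unit direction $w\perp\operatorname{e}_{m+1}$, and for $\tau\in\R$ let $P_\tau=\{x:\langle x,w\rangle=\tau\}$ and $M_\tau^+=M\cap\{\langle x,w\rangle\ge\tau\}$. The key analytic input is the hypothesis that $M$ has a single end smoothly asymptotic to a translating paraboloid $\mathcal P$. The translating paraboloid is rotationally symmetric about the vertical axis $\operatorname{e}_{m+1}\R$ and is a graph over a hyperplane $\{x_{m+1}=\text{const}\}$; in particular, outside a large vertical cylinder, $M$ is a small graphical perturbation of $\mathcal P$, so it is well-controlled at infinity and in particular the part of $M$ outside a large compact set already enjoys the reflection symmetry of the asymptotic model up to lower-order terms. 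Using this asymptotic control together with the finite genus hypothesis (which guarantees $M$ is, outside a compact set, an annular graphical end and prevents pathological topology from obstructing the reflection), I would show: for $\tau$ sufficiently large, $M_\tau^+$ is a graph over $P_\tau$ and $R_{P_\tau}(M_\tau^+)$ lies on the $\tau$-side of $M$; hence the moving-plane procedure can be started. Then decrease $\tau$ and let $\tau_0$ be the first value at which the reflected cap $R_{P_{\tau_0}}(M^+_{\tau_0})$ either becomes tangent to $M\cap\{\langle x,w\rangle\le\tau_0\}$ at an interior point, or develops a point where $M$ meets $P_{\tau_0}$ orthogonally. In either case the maximum principle from Step~1 forces $M$ to be symmetric with respect to $P_{\tau_0}$.

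The third step is to combine the symmetries. Since $w\perp\operatorname{e}_{m+1}$ was arbitrary and the asymptotic paraboloid is centered on the axis $\operatorname{e}_{m+1}\R$, the plane of symmetry $P_{\tau_0}=P_{\tau_0}(w)$ must in each case pass through this axis (otherwise the reflected end would not match the asymptotic end, which is centered on the axis — here one uses the asymptotics quantitatively to locate $\tau_0$). Thus $M$ is invariant under reflection across every vertical hyperplane containing $\operatorname{e}_{m+1}\R$, hence $M$ is rotationally symmetric about the vertical axis. A rotationally symmetric translator is obtained by rotating a profile curve, and the ODE analysis recalled in subsection~\ref{examples}d shows the only complete embedded one with a single end is the translating paraboloid; alternatively, among rotationally symmetric complete embedded translators the catenoids have two ends, so the single-end and asymptotic hypotheses single out the paraboloid. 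This completes the proof.

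I expect the main obstacle to be Step~2: making the start of the moving-plane argument and the identification of $\tau_0$ rigorous requires turning "smoothly asymptotic to a translating paraboloid" into effective $C^1$ (better, $C^2$) estimates on the end — enough to know the end is a graph with controlled gradient decay so that reflected caps are themselves graphs and the planes of symmetry are forced through the axis. The finite-genus hypothesis enters precisely here, to ensure the end is connected and annular so that a single moving plane controls the whole end; without it one could not exclude, a priori, several ends interfering with the reflection. The interior maximum principle (Step~1) and the final rigidity (Step~3) are comparatively standard once the translator operator is written in the conformal/minimal-surface form of Section~\ref{alexandrov}.
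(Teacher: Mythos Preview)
Your proposal is correct and follows essentially the same route as the paper: Alexandrov reflection across hyperplanes containing $\v$, started at infinity using the explicit paraboloid asymptotics, driven by the interior/boundary tangency principle for translators (viewed as minimal hypersurfaces in the conformal metric), and concluded by assembling the symmetries over all horizontal directions to obtain rotational symmetry. The paper organizes the sweep via an open--closed argument on the set $\mathcal A=\{t\ge 0: M_+(t)\text{ is a graph over }\Pi\text{ and }M^*_+(t)\ge M_-(t)\}$ rather than your ``first contact'' formulation, and it uses the quantitative expansion $g(x)=\tfrac12|x|^2-\tfrac12\log|x|^2+O(|x|^{-1})$ to rule out contact at infinity when $t_0>0$; but this is exactly the effective $C^1$ control on the end that you flag as the main obstacle, so your diagnosis of where the work lies matches the paper's execution.
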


The last section is devoted to translating solitons in $\real{3}$. We focus on the study of the
distribution of the Gau{\ss} map of a complete translating soliton. In particular we investigate
how the Gau{\ss} image affects the genus of a complete translating soliton in $\real{3}$.
We prove that within the class of translating solitons for which the Gau{\ss} map omits the north
pole (or more generally the direction of translation) it holds that a translator is mean convex, if
it is mean convex outside a compact subset. More precisely, we show the following.

\begin{thmc}
Let $f:M^2\to\real{3}$ be a translating soliton whose mean curvature satisfies $H>-1$.
Suppose that $H\ge 0$ outside a compact subset of $M$. Then either $M=f(M^2)$ is part of
a flat plane or $H>0$ on all of $M^2$. If, in addition, $M$ is
properly embedded, then it is a graph and so it has genus zero.
\end{thmc}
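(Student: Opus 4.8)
The plan is to exploit the well-known fact that for a translating soliton the scalar mean curvature $H$ satisfies a nice linear elliptic equation, so that the strong maximum principle applies. Recall that for a translator $f:M^m\to\real{m+1}$ one has $H=-\langle\v,\xi\rangle$, and differentiating this twice together with the Codazzi equations yields, on $M^2$,
\begin{equation*}
\lap H+\langle\v^\top,\nabla H\rangle+|A|^2H=0,
\end{equation*}
where $\v^\top$ denotes the tangential part of $\v$. The hypothesis $H>-1$ enters through the ambient conformal-metric picture from Section \ref{alexandrov}: in the metric $\operatorname{G}=e^{2\langle\v,\cdot\rangle/m}\langle\cdot,\cdot\rangle$ (or rather its two-dimensional analogue) a translator is minimal, and $1+H$ is, up to a positive conformal factor, the mean curvature of $f$ with respect to a natural normal; we only need that $H>-1$ guarantees a sign somewhere so that the Gau\ss\ map picture is controlled. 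First I would establish the dichotomy $H\equiv 0$ or $H>0$: since $H\ge 0$ outside a compact set $K$, the function $H$ attains its minimum over $K$; if that minimum is negative it is an interior local minimum of a solution of the above elliptic equation, and the strong maximum principle (Hopf's lemma) forces $H$ to be constant, hence $H\equiv$ (negative constant), contradicting $H\ge0$ off $K$ — unless that constant is $\le 0$ consistent only with $H\equiv 0$. Thus $H\ge 0$ everywhere, and a second application of the strong maximum principle to the nonnegative solution $H$ gives: either $H\equiv 0$, in which case $f$ is a minimal surface with $\langle\v,\xi\rangle\equiv 0$, i.e. a surface invariant under translation in the $\v$-direction that is a plane once we also use completeness/the structure (a complete minimal surface in $\real3$ that is a cylinder over a curve of zero curvature is a plane); or $H>0$ on all of $M^2$.

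Next, assuming $M=f(M^2)$ is properly embedded and $H>0$, I would show $M$ is a graph over a plane domain. The key geometric input is that $H=-\langle\v,\xi\rangle>0$ means $\langle\v,\xi\rangle<0$ everywhere, so the Gau\ss\ map $\xi:M^2\to S^2$ never takes the value $\v=\operatorname{e}_3$ (north pole) — indeed its image lies in the open southern hemisphere $\{\langle\cdot,\v\rangle<0\}$. In particular the last coordinate $\langle\xi,\operatorname{e}_3\rangle$ is strictly negative, so every tangent plane to $M$ is transverse to the $\operatorname{e}_3$-direction; hence $M$ is locally a graph $x_3=u(x_1,x_2)$. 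To upgrade this to a global graph I would use properness and embeddedness: the vertical projection $\pi:M\to\real2$ is a local diffeomorphism, and one shows it is a proper covering map onto a domain $\Omega\subset\real2$; connectedness of $M$ and simple-connectedness arguments (or a monotonicity/separation argument using that the vertical lines $\{(x_1,x_2)\}\times\real{}$ meet the properly embedded $M$ transversally and, by the maximum principle comparison with grim hyperplanes or paraboloids, in exactly one point) give that $\pi$ is injective, so $M$ is an entire-type graph over $\Omega$. A surface diffeomorphic to a planar domain has genus zero, which is the final assertion.

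The main obstacle, I expect, is the step from "locally a graph / $\pi$ is a local diffeomorphism" to "globally a graph." Transversality of the tangent planes to $\operatorname{e}_3$ only gives an immersed multigraph a priori; ruling out that a vertical line hits $M$ more than once requires genuinely global input. The cleanest route is an Alexandrov-type reflection in horizontal planes $\{x_3=c\}$ combined with properness — very much in the spirit of Theorem \ref{reflection} — or alternatively a separation argument: a properly embedded surface in $\real3$ whose Gau\ss\ image avoids a hemisphere must be a graph, because it separates $\real3$ into two components and the $\operatorname{e}_3$-coordinate has no interior critical points on $M$, so it has none on either side barring a vertical line tangency that transversality already excludes. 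I would also need to handle carefully the degenerate branch $H\equiv0$: here properness plus $\langle\v,\xi\rangle\equiv0$ forces $M$ to be a complete embedded minimal surface in $\real3$ that is a Riemannian product with an $\real{}$-factor in the $\v$-direction, and such a cylinder is embedded only if the profile curve is a line, giving a plane; this is consistent with the "part of a flat plane" alternative in the statement.
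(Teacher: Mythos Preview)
Your argument has a genuine gap at the very first step. You apply the strong maximum principle to
\[
\Delta H+\langle\nabla u,\nabla H\rangle+|A|^2H=0
\]
to conclude that a negative interior minimum of $H$ forces $H$ to be constant. But the zeroth--order coefficient here is $c=|A|^2\ge 0$, which is the \emph{wrong} sign for that conclusion. The Hopf strong maximum principle needs $c\le 0$ to rule out a nonpositive interior minimum; with $c\ge 0$ the equation behaves like a Jacobi equation, and solutions can perfectly well change sign and attain strict negative interior minima without being constant (think of eigenfunctions of $\Delta+\lambda$, or Jacobi fields on a minimal surface). Concretely, at a negative minimum $x_0$ one only gets $\Delta H(x_0)=-|A|^2(x_0)H(x_0)\ge 0$, which is consistent with a minimum and yields no contradiction. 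So as written the proof does not establish $H\ge 0$, and everything downstream collapses.

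The paper gets around exactly this obstacle by testing not $H$ but the weighted function $g:=He^{\lambda u}$ for a parameter $\lambda\in[1/2,1)$. At a negative interior minimum of $g$ one has $\nabla H=-\lambda H\nabla u$; feeding this into the Ricci identity $K|\nabla u|^2=-|\nabla H|^2-H\langle\nabla H,\nabla u\rangle$ (Lemma~\ref{lemm rel}(g)) gives $K=\lambda(1-\lambda)H^2$ at $x_0$ whenever $|\nabla u|(x_0)\neq 0$, and substituting this back into the inequality coming from $\Delta g(x_0)\ge 0$ produces a genuine contradiction. The hypothesis $H>-1$ is what disposes of the remaining case $|\nabla u|(x_0)=0$: since $|\nabla u|^2+H^2=1$, that case would force $H(x_0)=-1$, which is excluded. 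This is the actual role of $H>-1$ in the proof --- not the conformal picture you allude to. Once $H\ge 0$ is established, your second application of the strong maximum principle (now with $H\ge 0$, so the equation reads $\Delta H+\langle\nabla u,\nabla H\rangle=-|A|^2H\le 0$ and the zeroth--order term has been absorbed) is correct and matches the paper.
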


We would like to point out here that from the equation (\ref{trans 1b}) the scalar mean curvature $H$
of a translating soliton always satisfies the inequality
$$-1\le H=-\langle\v,\xi\rangle\le 1.$$
Hence, the assumption $H>-1$ means that the Gau{\ss} map $\xi$ is omitting the north pole 
of 
$\mathbb{S}^2$. 

At this point we would like to mention that it is perhaps true that {\it any
complete translating soliton in $\real{3}$ whose Gau{\ss} map omits the north pole
must have genus zero}. In the following theorem we give a partial answer to this
question.

\begin{thmd}
Suppose that $\Sigma_{g}$ is a compact Riemann surface of genus $g$ and let $\{p_1,\dots,p_k\}$ be
distinct points on $\Sigma_g$. Suppose further that $f:\Sigma_{g,k}:=\Sigma_g\smi\,\{p_1,\dots,p_k\}\to\real{3}$
is a complete translating soliton that satisfies the following two conditions:
\begin{enumerate}[\rm(a)]
\item each end is either bounded from above or from below,
\smallskip
\item the mean curvature satisfies $H>-1$ everywhere and
$$\limsup_{x\to p_j}H^2(x)\le 1-\varepsilon$$
for some positive constant number $\varepsilon$ and for
any of the punctures $p_j\in\{p_1,\dots,p_k\}$. 
\end{enumerate}
Then $g\le 1$. If $f$ is an embedding, then $g=0$ and thus the Riemann surface $\Sigma_{g,k}$ is a planar domain.
\end{thmd}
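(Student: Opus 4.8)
The plan is to translate the geometric hypotheses into a statement about the Gauß map $\xi:\Sigma_{g,k}\to\mathbb{S}^2$ and then use a completeness/properness argument at the punctures to conclude that $\xi$ extends holomorphically (or at least continuously) to the compact surface $\Sigma_g$, missing the north pole. First I would recall from the discussion after Theorem C that $H=-\langle\v,\xi\rangle$, so $H>-1$ everywhere exactly says $\xi$ omits the north pole $N=\operatorname{e}_3$. Stereographic projection from $N$ then turns $\xi$ into a genuine map $\Sigma_{g,k}\to\mathbb{C}$. On a translating soliton in $\real{3}$ (equivalently, a minimal surface in the conformally changed metric $\operatorname{G}$ described in Section~\ref{alexandrov}), the Gauß map is harmonic, hence, after choosing the conformal structure induced by $f$, the composition with stereographic projection is, up to the usual considerations, (anti)holomorphic away from the zeros of $H-$type quantities; the key point to extract is that $\xi$ is a nonconstant (anti)holomorphic map of finite degree unless $M$ is flat. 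The excerpt already proves (in the ``immediate consequence'' of Theorem~A) that a translator with $H\equiv 0$ is a minimal hypersurface tangent to $\v$, which in $\real 3$ forces a plane; so we may assume $H\not\equiv 0$ and $\xi$ is nonconstant.

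The second step is to control the behavior of $\xi$ at each puncture $p_j$. Hypothesis (b), $\limsup_{x\to p_j}H^2(x)\le 1-\varepsilon$, means that near $p_j$ the Gauß image stays in a fixed compact subset $K_\varepsilon$ of $\mathbb{S}^2\setminus\{N,S\}$ bounded away from both poles --- in stereographic coordinates, $\xi$ is bounded and bounded away from $0$ in a punctured neighborhood of $p_j$. Combined with hypothesis (a) --- each end lies in a half-space $\{x_3\le c\}$ or $\{x_3\ge c\}$ --- and completeness of $f$, one argues that the end cannot have finite total curvature collapsing onto a point of $\mathbb{S}^2$ in a way that produces a genuine essential singularity: a bounded (anti)holomorphic function on a punctured disc extends across the puncture by the removable singularity theorem. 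Hence $\xi$ extends to a continuous, indeed (anti)holomorphic, map $\widehat{\xi}:\Sigma_g\to\mathbb{S}^2$ still avoiding $N$ (the extended values lie in $K_\varepsilon$, so in particular $\widehat\xi(p_j)\ne N$), and away from the punctures $\widehat\xi=\xi$ omits $N$ as well. Thus $\widehat\xi:\Sigma_g\to\mathbb{S}^2\setminus\{N\}\cong\mathbb{C}$ is a holomorphic map from a compact Riemann surface to $\mathbb{C}$, therefore constant --- contradicting nonconstancy of $\xi$, \emph{unless} the surface on which $\widehat\xi$ lives is not all of $\Sigma_g$. This forces the conclusion that the conformal type, together with the harmonic/holomorphic structure, is consistent only when $g\le 1$: concretely, a nonconstant harmonic map to $\mathbb{S}^2$ missing a point, on a finitely-punctured compact surface with removable punctures, has degree bounded by a Riemann–Hurwitz count that, against the Gauß equation for the soliton, caps the genus at $1$ (the torus case being the borderline where the relevant line bundle has degree zero).

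For the embedded case I would upgrade this: if $f$ is an embedding, then by a separating/barrier argument (using hypothesis (a) so that each end is trapped in a half-space, and the avoidance of $N$ so that there are no vertical tangent planes) the surface is transverse to the vertical lines and is in fact a multigraph, hence a graph over a planar domain; an embedded graph has genus zero, so $g=0$ and $\Sigma_{g,k}$ is a planar domain. The main obstacle I anticipate is the middle step: rigorously promoting hypotheses (a) and (b) into the removability of the punctures for $\xi$ and, simultaneously, getting a clean Riemann–Hurwitz/degree inequality tying the genus to the omitted value --- the $\varepsilon$ in (b) is surely there precisely to prevent the Gauß image from limiting onto $N$ or $S$ at an end and to give uniform bounds enabling the removable-singularity and finite-total-curvature estimates, and making that quantitative (rather than just ``bounded'') is where the real work lies.
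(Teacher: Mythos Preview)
Your approach rests on a false premise: the Gau\ss\ map of a translating soliton in $\real{3}$ is \emph{not} holomorphic (nor harmonic). The Gau\ss\ map of a surface in $\real{3}$ is conformal precisely when the surface is minimal or totally umbilic in the \emph{Euclidean} metric; being minimal in the conformal metric $\operatorname{G}$ does not help, because the Gau\ss\ map still takes values in the Euclidean unit sphere. Likewise, by Ruh--Vilms the Gau\ss\ map is harmonic only for parallel mean curvature, and $H=-\langle\xi,\v\rangle$ is nonconstant on any nontrivial translator. So the removable-singularity theorem for holomorphic maps and the Riemann--Hurwitz machinery simply do not apply. There is also a logical slip: if your extension argument worked, a nonconstant holomorphic map $\Sigma_g\to\mathbb{C}$ would be constant for \emph{every} $g$, giving a contradiction regardless of genus --- nothing in that line singles out $g\le 1$. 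Finally, in the embedded case you conflate ``$\xi$ omits $N$'' (tangent plane not \emph{horizontal}) with ``no vertical tangent planes'' (which would require $H\neq 0$); the translating catenoid already shows that $H>-1$ does not make the surface a graph.

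The paper's argument is purely topological and uses no complex analysis. One first converts the hypotheses via $|\nabla u|^2+H^2=1$: condition~(b) gives $\liminf_{x\to p_j}|\nabla u|\ge\sqrt{\varepsilon}>0$, and condition~(a) says the height $u$ is bounded on one side at each end. These are exactly the hypotheses of a general statement (Theorem~\ref{gauss map}) about immersed punctured Riemann surfaces. Its proof runs as follows: using the flow of $\nabla u/|\nabla u|$ and completeness, one finds near each puncture a closed embedded level curve of $u$; one then truncates each end along such a curve and glues in a smooth ``spherical cap'' (Lemma~\ref{spherical}) whose only critical point of $u$ is a nondegenerate maximum with $K>0$. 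This produces a smooth closed surface $\Sigma_g$ with a new Gau\ss\ map $\tilde\xi$. Since the original $\xi$ omits $\v$, the preimage $\tilde\xi^{-1}(\v)$ lies entirely among the cap poles, where $K>0$, so $\v$ is a regular value and every local degree is $+1$. The classical identity $\deg\tilde\xi=1-g$ then yields
\[
1-g=\#\,\tilde\xi^{-1}(\v)\ge 0,
\]
hence $g\le 1$. If $f$ is an embedding the caps can be attached so that the closed surface is embedded; moving a horizontal plane in from $+\infty$ forces at least one pole with $\tilde\xi=\v$, so $1-g\ge 1$ and $g=0$.
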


As mentioned earlier, it is not known, if there exists any complete translator with genus 1.
\section{Translating solitons}

\subsection{Local formulas}
In this subsection we will derive and summarize the most relevant equations related to translators.
Let $f:M^m\to\real{m+1}$ be an immersion and $\gind=f^*\langle\cdot,\cdot\rangle$ the induced metric. Denote
by $D$ the Levi-Civita
connection of $\real{m+1}$ and by $\nabla$ the Levi-Civita connection of $\gind$. The {\it second fundamental
form} $\A$ of $f$ is
$${\A}(v,w):=D_{\operatorname{d}{\hspace{-2pt}}f(v)}\operatorname{d}{\hspace{-2pt}}f(w)-
\operatorname{d}{\hspace{-4pt}}f\big(\nabla_vw\big),$$
where $v,w$ are tangent vectors on $M$. The {\it mean curvature
vector field} ${\H}$ of $f$ is defined by
$${\H}:=\operatorname{trace}_{\gind}{\A}.$$
Let now $\xi$ be a local unit vector field normal along $f$. The symmetric bilinear form $A$ given by
$$A(v,w):=-\langle {\A}(v,w),\xi\rangle,$$
where $v,w\in TM^m$, is called the {\it scalar second fundamental from} of $f$.
The {\it scalar mean curvature} $H$ is defined as the
trace of $A$ with respect to $\gind$.
Suppose now that $f$ is a translating soliton, that is
$${\H}={\v}^{\perp}$$
where ${\v}=(0,\dots,0,1)\in\real{m+1}$ and where ${\v}^\perp$ is the orthogonal projection of ${\v}$
onto the normal bundle of $f$. The orthogonal projection of ${\v}$ onto the tangent bundle
of $f$ will be denoted by ${\v}^{\top}$.
Let us introduce the height function $u:M^m\to\real{}$, given by
\begin{equation}\label{trans 2}
u:=\langle f,{\v}\rangle.
\end{equation}
In the next lemma we give some important relations between the mean curvature $H$ and the height function $u$.
\begin{lemma}\label{lemm rel}
The following equations hold on any translating hypersurface in $\real{m+1}$.
\begin{enumerate}[\rm(\rm a)]
\item $\nabla u={\v}^{\top},$\\[-10pt]
\item $|\nabla u|^2=1-H^2,$\\[-10pt]
\item $\nabla^2u=HA,$\\[-10pt]
\item $\Delta u +|\nabla u|^2-1=0,$\\[-10pt]
\item $\langle\nabla H,\cdot\,\rangle=-A(\nabla u,\cdot\,),$\\[-10pt]
\item $\Delta H+H|A|^2+\langle\nabla H,\nabla u\rangle=0,$\\[-10pt]
\item $\operatorname{Ric}(\nabla u,\nabla u)=-|\nabla H|^2-H\langle\nabla H,\nabla u\rangle,$\\[-10pt]
\item $\Delta|A|^2-2|\nabla A|^2+\langle\nabla|A|^2,\nabla u\rangle+2|A|^4=0.$
\end{enumerate}
\end{lemma}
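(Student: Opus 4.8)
The plan is to derive everything from the soliton equation $\H = \v^\perp$ together with the standard Gauss--Weingarten relations, computing in a local frame. First I would establish (a): since $u = \langle f, \v\rangle$, differentiating gives $\langle \operatorname{d}f(\cdot), \v\rangle$, so $\nabla u$ is exactly the tangential part $\v^\top$ of the constant vector $\v$. Then (b) follows immediately by decomposing $\v = \v^\top + \v^\perp$ into orthogonal components and using $|\v|^2 = 1$ together with $|\v^\perp|^2 = |\H|^2 = H^2$ (the last equality because $\v^\perp = H\xi$ when $\xi$ is chosen so that $H = -\langle \v,\xi\rangle$, or more invariantly $|\v^\perp| = |\H| = |H|$). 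For (c), I would differentiate the identity $\nabla u = \v^\top$ once more: writing $\v^\top = \v - \langle\v,\xi\rangle\xi = \v + H\xi$ (in the scalar convention $H = -\langle\v,\xi\rangle$), apply $D$ and project onto $TM$, using the Weingarten equation $D_v\xi = \operatorname{d}f(\mathcal{A}v)$ where $\mathcal{A}$ is the shape operator; the normal terms drop out and one is left with $\nabla^2 u = HA$. Tracing (c) with respect to $\gind$ gives $\Delta u = H^2$, and combining with (b) yields (d).

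Next, (e) comes from differentiating $H = -\langle\v,\xi\rangle$: we get $\nabla_v H = -\langle\v, D_v\xi\rangle = -\langle\v^\top, \operatorname{d}f(\mathcal{A}v)\rangle = -A(\nabla u, v)$, using (a) and the Weingarten relation once more. For (f), the key input is Simons-type identity for translators, namely the PDE satisfied by $H$. One route: take the divergence of (e), i.e. compute $\Delta H = \operatorname{div}(\nabla H)$, and use the Codazzi equations $\nabla_i A_{jk} = \nabla_j A_{ik}$ (valid since the ambient space is flat) to commute derivatives; one obtains $\Delta H = -\langle \nabla H, \nabla u\rangle - H|A|^2$ after substituting $\nabla^2 u = HA$ from (c) and $\operatorname{trace} A = H$. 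Alternatively, recall that for the rescaled/weighted setting the drift-Laplacian operator $\mathcal{L} = \Delta + \langle\nabla u, \nabla\cdot\rangle$ satisfies $\mathcal{L} H = -H|A|^2$, which is precisely (f); I would verify this directly from the structure equations.

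For (g), I would use the contracted Gauss equation to express the Ricci curvature, $\operatorname{Ric}(v,w) = H A(v,w) - \langle \mathcal{A}v, \mathcal{A}w\rangle$ (ambient flat), evaluate at $v = w = \nabla u$, and then rewrite both terms using (e): the first term $H A(\nabla u, \nabla u) = -H\langle\nabla H, \nabla u\rangle$ and the second term $\langle \mathcal{A}\nabla u, \mathcal{A}\nabla u\rangle = |A(\nabla u, \cdot)|^2 = |\nabla H|^2$, again by (e). Finally (h) is a Simons-type identity for $|A|^2$ on translators: I would start from the general Simons identity $\Delta A = \nabla^2 H + (\text{curvature terms})$, contract to get $\frac12\Delta|A|^2 = |\nabla A|^2 + \langle A, \nabla^2 H\rangle + (\text{quartic terms})$, then substitute the expression for $\nabla^2 H$ obtained by differentiating (e) twice, and collect terms; the translator-specific first-order term $\langle\nabla|A|^2,\nabla u\rangle$ appears exactly as in (f), reflecting the same drift-Laplacian structure, and the zeroth-order terms combine into $+2|A|^4$. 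The main obstacle is the bookkeeping in (h): one must carefully track the curvature terms in Simons' identity, correctly commute third derivatives using Codazzi and the Gauss equation, and substitute the soliton relations (c) and (e) at the right moments so that all the extraneous terms cancel and only $2|\nabla A|^2$, the drift term, and $2|A|^4$ survive; items (a)--(g) are comparatively short once the Gauss--Weingarten and Codazzi machinery is in place.
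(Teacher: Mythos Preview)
Your plan is correct and matches the paper's proof essentially step for step: (a)--(e) are derived exactly as you describe, (f) and (h) proceed via the Hessian of $H$ (obtained by differentiating (e) once more with Codazzi) and Simons' identity respectively, and (g) uses the Gauss equation for $\operatorname{Ric}$ together with (e) twice. The only cosmetic difference is that the paper computes $\nabla^2 H$ explicitly before tracing to get (f), since that full Hessian formula is reused in (h).
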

\begin{proof}
Let $\{e_1,\dots,e_m\}$ be an orthonormal frame defined on an open neighborhood of $M^m$.
\begin{enumerate}[\rm(a)]
\item
Differentiating $u$ with respect to $e_i$ we get
$$e_iu=\langle\operatorname{d}\hspace{-2pt}f(e_i),{\v}\rangle.$$
Therefore,
\begin{equation*}
\nabla u={\v}^\top.
\end{equation*}
\item
Since ${\v}$ has unit length, we obtain the crucial identity
\begin{equation*}
1=|{\v}|^2=|{\v}^\perp|^2+|{\v}^\top|^2=H^2+|\nabla u|^2.
\end{equation*}
\item
Differentiating $\nabla u$ once more, we deduce that
\begin{eqnarray*}
\nabla^{2}u(e_i,e_j)&=&e_ie_j u-\langle\nabla u,\nabla_{e_i}e_j\rangle\\
&=&e_i\langle\operatorname{d}\hspace{-2pt}f(e_j),{\v}\rangle-\langle\operatorname{d}\hspace{-2pt}f(\nabla_{e_i}e_j),{\v}\rangle\\
&=&\langle {\A}(e_i,e_j),{\v}\rangle\\
&=&HA(e_i,e_j).
\end{eqnarray*}
\item
From the last formula giving the Hessian of $u$, we see that
$$\Delta u=H^2=1-|\nabla u|^2.$$
\item
Differentiating $H$ with respect to the direction $e_i$, we get
\begin{eqnarray*}
\langle\nabla H, e_i\rangle&=&-e_i\langle {\v},\xi\rangle
=-\langle {\v},\operatorname{d}\hspace{-2pt}\xi(e_i)\rangle
=-\langle{\v}^{\top},\operatorname{d}\hspace{-2pt}\xi(e_i)\rangle\\
&=&-A(\nabla u,e_i).
\end{eqnarray*}
\item
Differentiating once more the gradient of $H$ and using the Codazzi equation, we have
\begin{eqnarray*}
\nabla^{2}H(e_i,e_j)&=&-\sum_{k=1}^{m}(\nabla_{e_k}A)(e_i,e_j)e_{k}u\\
&&-\sum_{k=1}^{m}A(e_k,e_j)\nabla^{2}u(e_i,e_k)\\
&=&-\sum_{k=1}^{m}(\nabla_{e_k}A)(e_i,e_j)e_{k}u\\
&&-H\sum_{k=1}^{m}A(e_k,e_j)A(e_i,e_k).
\end{eqnarray*}
Therefore,
$$\Delta H=-\sum_{k=1}^{m}(e_kH)(e_ku)-H|A|^2=-\langle\nabla H,\nabla u\rangle-H|A|^2.$$
\item
From the relation (e), we get
$$|\nabla H|^2=A^{[2]}(\nabla u,\nabla u)$$
where $A^{[2]}$ is the symmetric $2$-tensor given by
$$A^{[2]}(v,w)=\sum_{i=1}^mA(v,e_i)A(e_i,w),$$
for any $v,w\in TM^m$.
Again from the relation (e), we have
$$H\langle\nabla H,\nabla u\rangle=-HA(\nabla u,\nabla u).$$
Consequently,
$$\big(HA-A^{[2]}\big)(\nabla u,\nabla u)=-|\nabla H|^2-H\langle\nabla H,\nabla u\rangle.$$
By Gau\ss' equation, the left hand side of the above equation equals the Ricci curvature of the induced metric applied to $\nabla u$. Therefore,
\begin{equation*}
\operatorname{Ric}(\nabla u,\nabla u)=-|\nabla H|^2-H\langle\nabla H,\nabla u\rangle.
\end{equation*}
\item
From Simons' formula \cite{simons}, we have that
\begin{eqnarray*}
\frac{1}{2}\Delta|A|^2&=&|\nabla A|^2-|A|^4+\sum_{i,j=1}^mA(e_i,e_j)\nabla^2H(e_i,e_j)\\
&&+H\hspace{-4pt}\sum_{i,j,k=1}^mA(e_i,e_j)A(e_j,e_k)A(e_k,e_i).
\end{eqnarray*}
Bearing in mind the formula which relates the Hessian of $H$ with $A$ and $u$, we get the desired formula.
\end{enumerate}
This completes the proof of the lemma.
\end{proof}

\begin{remark}
From Lemma \ref{lemm rel} (d) it follows that the height function $u$ does not admit any local maxima.
In particular the manifold $M^m$ cannot be compact, a fact that intuitively is clear.
Moreover,  Lemma \ref{lemm rel} (b) and (e) imply that
the critical sets $\operatorname{Crit}(H)$, $\operatorname{Crit}(u)$ of the mean curvature and the height
function on a translating soliton satisfy
\begin{eqnarray*}
\operatorname{Crit}(u)&=&\{x\in M:H^2(x)=1\}\\
\operatorname{Crit}(u)&\subset&\operatorname{Crit}(H),\\
\operatorname{Crit}(H)\smi\operatorname{Crit}(u)&\subset&\mreg\cap\{x\in M:\det A=0\},
\end{eqnarray*}
where $\mreg:=M^m\smi\operatorname{Crit}(u)$
denotes the \emph{regular} part of $M^m$. 
\end{remark}

\subsection{Examples}\label{examples}
We will expose here some examples of translators in the euclidean space.
\begin{enumerate}[(a)]
\item
All solutions $f:(-\pi/2,\pi/2)\to\real{2}$ of (\ref{trans 1}) in the plane
$\real{2}$ are of the form
$f(x)=(x,c-\log\cos x),$
\begin{figure}[h]
\includegraphics[scale=.06]{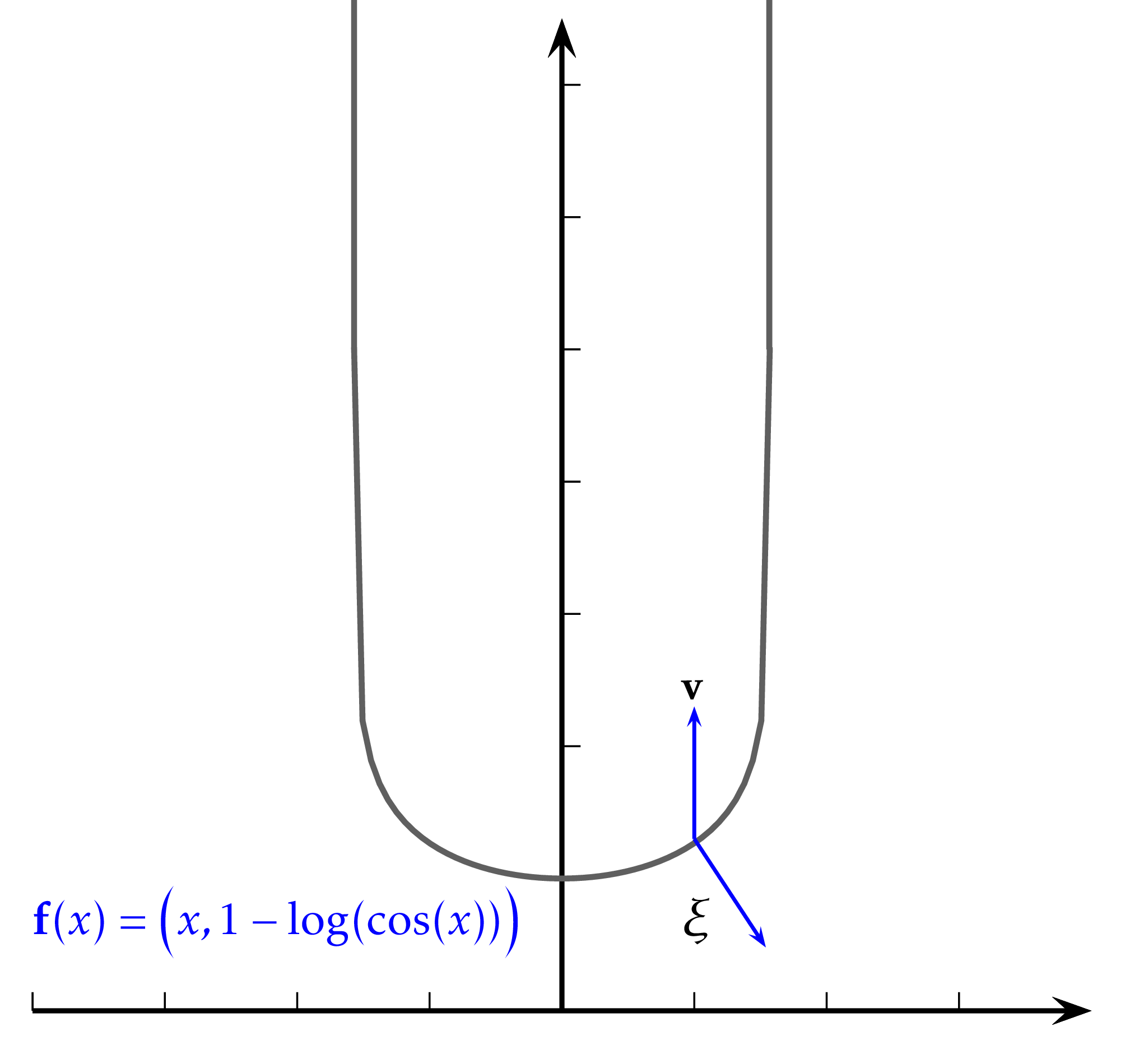}\caption{Grim reaper}\label{grimreaper}
\end{figure}
where $c$ is a real constant. The curve $f$ is known as the {\it grim reaper}.
These curves are geodesics of the Riemannian metric
$$\operatorname{G}_p:=e^{2\langle p,\v\rangle}\langle\cdot\,,\cdot\rangle,$$
on $\real{2}$.
The
\textit{grim hyperplane} in $\real{m+1}$ is the orthogonal product of the grim reaper
\begin{figure}[h]
\includegraphics[scale=.08]{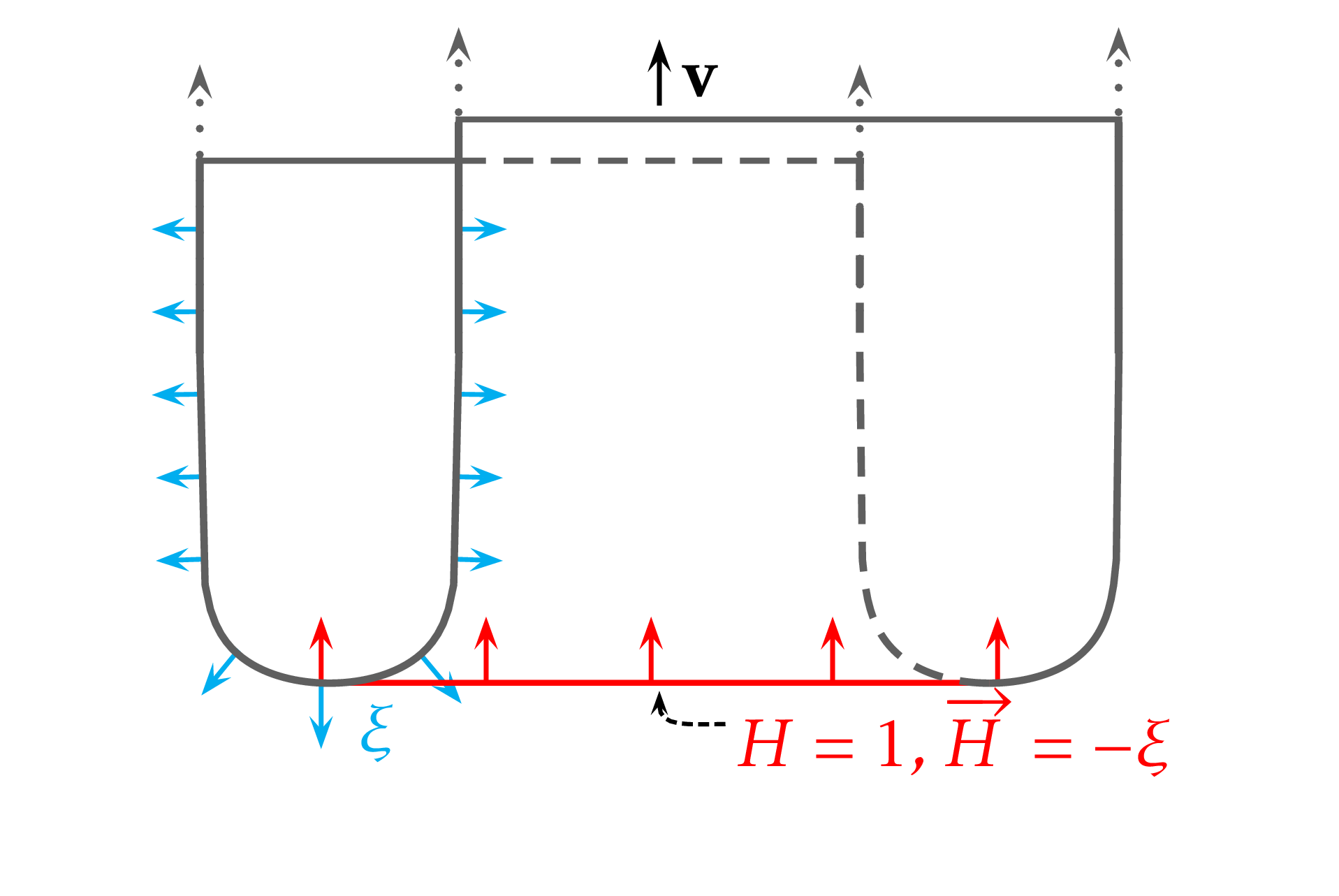}\caption{Grim hyperplane}\label{grimreaperplane}
\end{figure}
with an euclidean factor $\real{m-1}$. These examples are mean convex.
In fact, these examples have only one non-zero principal curvature.\\
\item
Suppose that $f$ is a  translator which is also minimal. Then ${\v}$ must be tangential to the translator.
Consequently, the only 
\begin{figure}[h]\includegraphics[scale=.08]{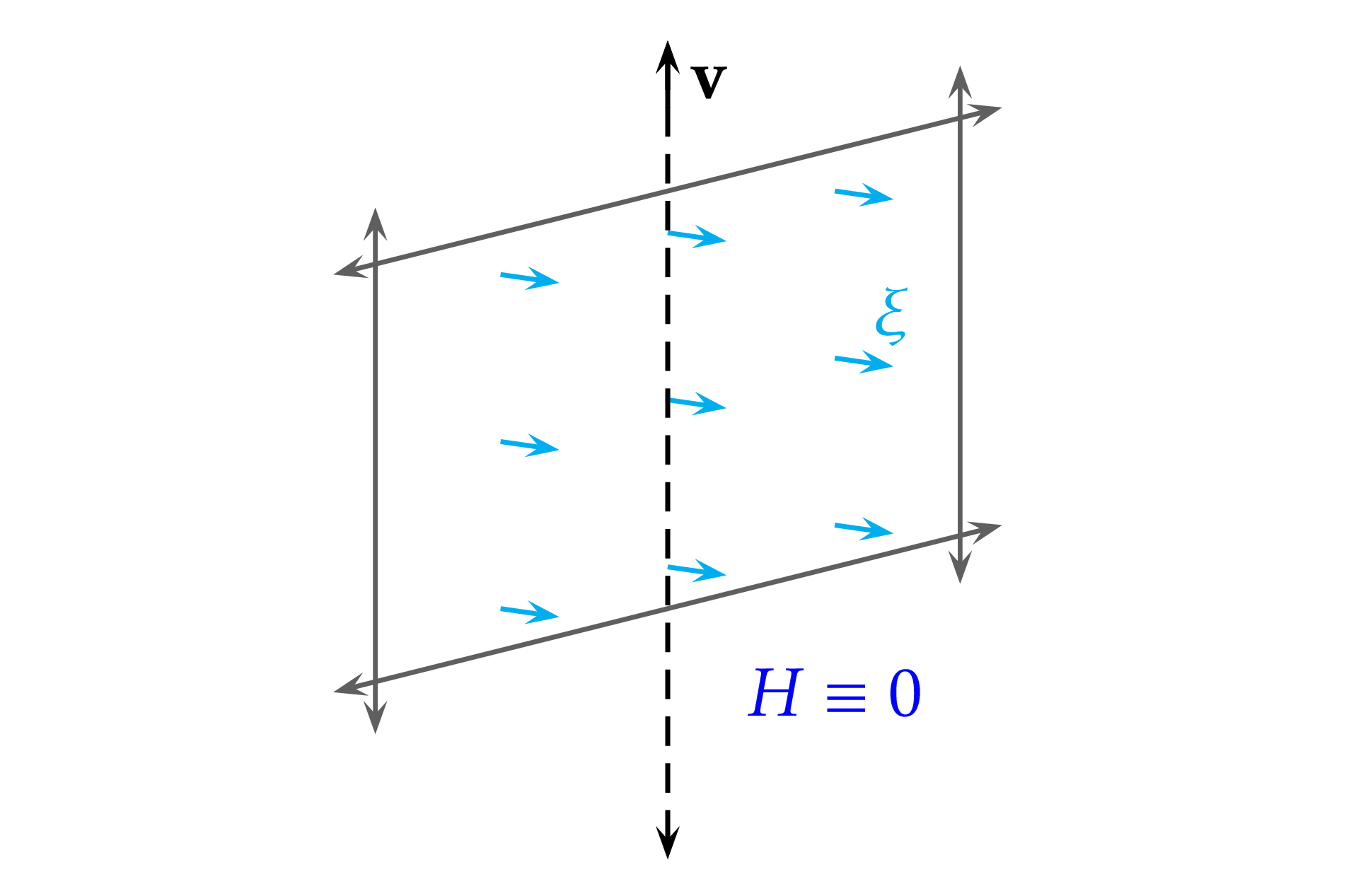}\caption{Flat plane tangent to ${\v}$}\label{plane}\end{figure}
minimal translating surfaces in $\real{3}$ are the flat planes which are tangential to the vector
${\v}$.\\
\item
Altschuler and Wu \cite{aw} evolved graphs by mean curvature flow defined over compact 
convex domains $\Omega$ in $\real{2}$ with prescribed contact angle to the boundary $\partial\Omega$.
They were able to prove that solutions converge to translating solitons that are neither convex nor
rotationally symmetric. Moreover, they showed the existence of complete, rotationally
symmetric translators.\\
\item
If one assumes rotational symmetry around the $\v$ axis, then the equation describing the translators
reduces to an ODE. As was shown by Clutterbuck, Schn\"urer and Schulze \cite{css}, such
complete rotationally symmetric translating solitons coincide (up to translations) either with
\begin{figure}[h]\includegraphics[scale=.08]{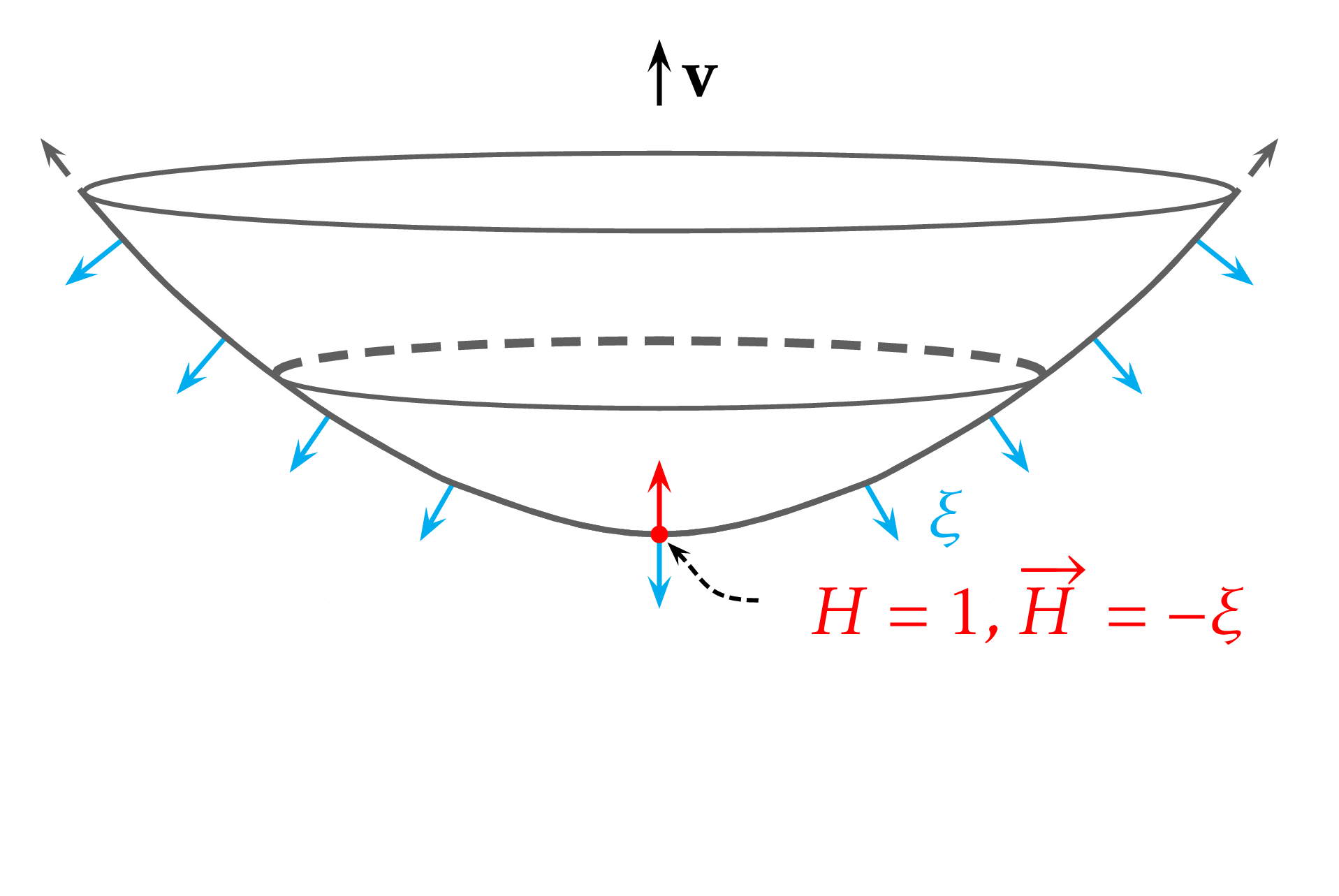}\includegraphics[scale=.08]{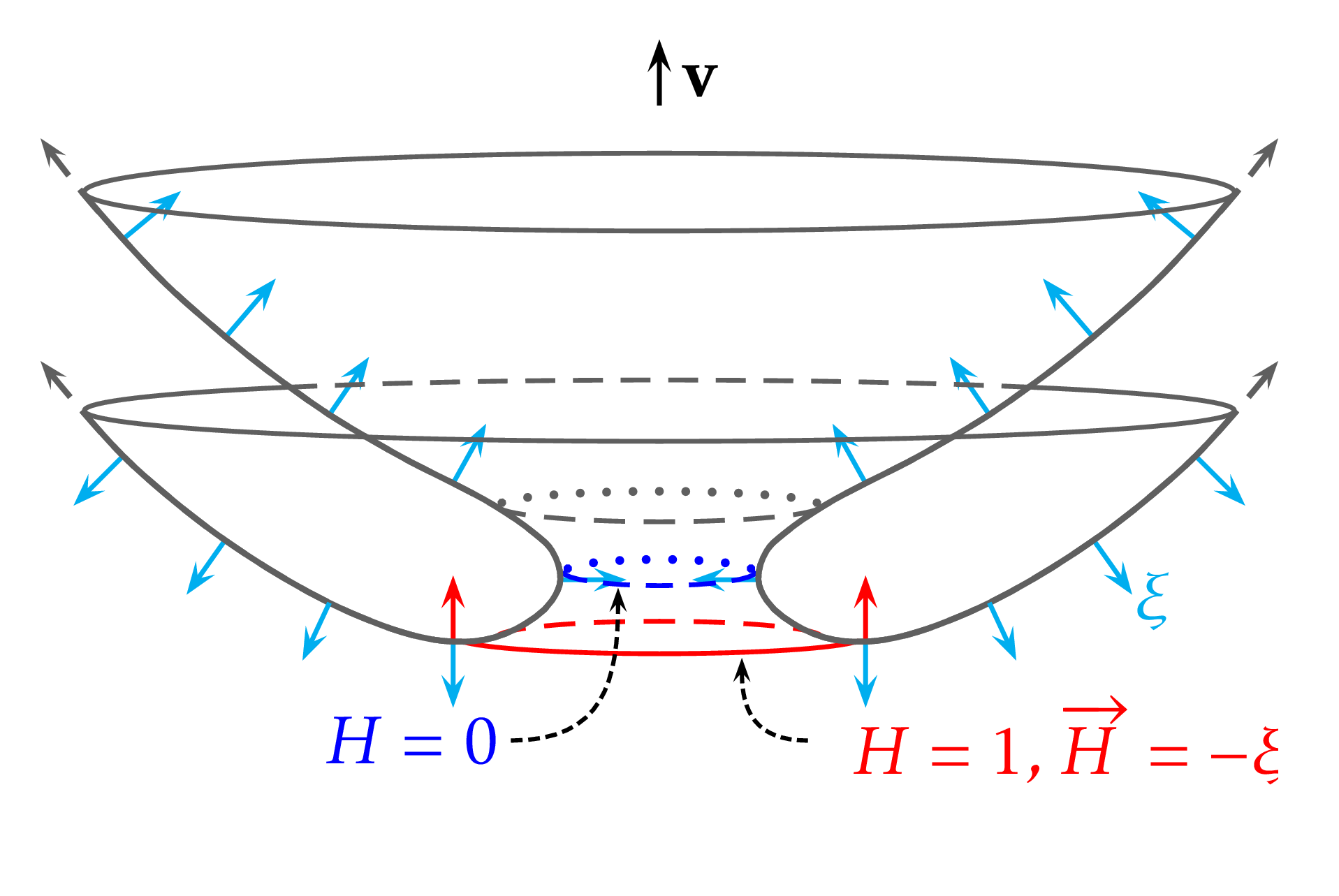}\caption{Translating paraboloid and translating catenoid}\label{translatingparaboloid}\end{figure}
the rotational symmetric {\it translating paraboloid} or with the rotationally symmetric
{\it translating catenoids} which can be seen as the desingularization
of two paraboloids connected by a small neck of some radius.\\
\item
Halldorsson \cite{halldorsson} proved the existence
of helicoidal type translators. Nguyen \cite{nguyen1,nguyen} desingularized  the intersection of a grim reaper
and a plane by a Scherk's minimal surface in $\real{3}$, and obtained a complete embedded translator of infinite genus.
It is not known if there are complete translating solitons with finite non-trivial topology.\\
\item
Wang \cite{w} studied graphical translating solitons in $\real{m+1}$. He proved that for any
natural number $m$ there exist complete convex graphical translating solitons, defined in strip regions, which
are not rotationally symmetric. When the dimension  $m$ is greater than $2$, Wang proved that there are
entire convex graphical translating solitons. On the other hand, Wang proved that any entire convex graphical
translating soliton in $\real{3}$ must be rotationally symmetric in an appropriate coordinate system. It
is still an open problem, if {\it any entire  graphical translating soliton in $\real{3}$ (not necessarily convex)
is rotationally symmetric}.
\end{enumerate}

\subsection{The tangency principle}
A basic tool employed in the proof of one of the main theorems is the tangency principle.  According to this principle two
different translating solitons cannot \textquotedblleft touch" each other at one interior or boundary point. More precisely,
\begin{theorem}\label{tangency}
Let $\Sigma_1$ and $\Sigma_2$ be $m$-dimensional embedded translating solitons with boundaries $\partial\Sigma_1$ and $\partial\Sigma_2$
in the euclidean space $\real{m+1}$.
\begin{enumerate}[(a)]
\item $(${\bf Interior principle}$)$ Suppose that there exists a common point $x$ in the interior of $\Sigma_1$ and $\Sigma_2$ where the corresponding tangent spaces coincide.
Then $\Sigma_1$ coincides with $\Sigma_2$.
\medskip
\item $(${\bf Boundary principle}$)$
Suppose that the boundaries $\partial\Sigma_1$ and $\partial\Sigma_2$ lie in the same hyperspace $\Pi$ of $\real{m+1}$ and that there exists
a common point of $\partial\Sigma_1$ and $\partial\Sigma_2$ where $\Sigma_1$ and $\Sigma_2$ have the same tangent space. Then $\Sigma_1$
coincides with $\Sigma_2$.
\end{enumerate}
\end{theorem}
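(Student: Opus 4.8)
The plan is to reduce the statement to a Hopf-type maximum principle for the quasilinear elliptic equation satisfied by a translator written as a graph over a hyperplane, just as one proves the analogous touching principle for minimal or constant mean curvature hypersurfaces. (Alternatively, one may invoke the classical tangency principle for minimal hypersurfaces of a Riemannian manifold, since, as recalled in Section~\ref{alexandrov}, a translator is a minimal hypersurface of $\real{m+1}$ for a real-analytic metric $\operatorname{G}$ conformal to the euclidean one.) I would prove the interior statement~(a) in detail and then indicate the changes for~(b). One first localizes near the contact point $x$. Let $T$ be the affine hyperplane $x+T_{x}\Sigma_{1}=x+T_{x}\Sigma_{2}$. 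Since the $\Sigma_{i}$ are embedded and tangent at $x$, on a small ball around $x$ each $\Sigma_{i}$ is the graph over $T$ of a function $u_{i}$, defined on a neighborhood $\Omega_{i}$ of $x$ in $T$, with $u_{i}(x)=0$ and $Du_{i}(x)=0$. In case~(b) one records in addition that, since both boundaries lie in $\Pi$, the boundary portion of each graph near $x$ is cut out of $\Omega_{i}$ by a fixed relation determined by $\Pi$ (well defined as long as $T\neq\Pi$, the degenerate configuration $T=\Pi$ being treated separately).

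For such a graph the soliton equation $\H=\v^{\perp}$ is equivalent to a quasilinear equation
\[
\mathcal{Q}[u]:=A^{ij}(Du)\,D_{ij}u+B(Du)=0,\qquad A^{ij}(p)=(1+|p|^{2})\,\delta^{ij}-p_{i}p_{j},
\]
whose leading matrix $(A^{ij})$ is positive definite --- hence uniformly elliptic wherever $|Du|$ stays bounded --- and whose coefficients depend on $Du$ only and \emph{not} on $u$ itself, because $\v^{\perp}$ enters only through the unit normal of the graph. This last point is exactly what makes the principle work: with $w:=u_{1}-u_{2}$, subtracting the two equations $\mathcal{Q}[u_{1}]=\mathcal{Q}[u_{2}]=0$ and applying the fundamental theorem of calculus along the segment joining the $1$-jets of $u_{2}$ and $u_{1}$ yields a homogeneous linear, uniformly elliptic equation
\[
Lw:=a^{ij}(y)\,D_{ij}w+b^{i}(y)\,D_{i}w=0
\]
near $x$, with \emph{no} zeroth-order term, $a^{ij}(y)=A^{ij}(Du_{1}(y))$ uniformly elliptic and $b^{i}$ bounded. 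By the meaning of \emph{touching} recalled before the statement --- a tangential contact with one surface locally on one side of the other, which is what arises in the applications (e.g.\ the Alexandrov reflection argument for Theorem~B) --- we may assume, after relabeling, that $\Sigma_{1}$ lies locally weakly on one side of $\Sigma_{2}$, i.e.\ $w\ge 0$; moreover $w(x)=0$ and $Dw(x)=0$.

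The maximum principle now closes the local step. In case~(a), the nonnegative solution $w$ of $Lw=0$ attains its minimum value $0$ at the interior point $x$, so E.~Hopf's strong maximum principle forces $w\equiv 0$ near $x$, i.e.\ $\Sigma_{1}$ and $\Sigma_{2}$ coincide in a neighborhood of $x$. In case~(b), the hypothesis $\partial\Sigma_{i}\subset\Pi$ together with the one-sided contact forces the two graphs to be defined, near $x$, over half-ball-type domains whose boundary through $x$ is a smooth hypersurface satisfying an interior ball condition; since $w\ge 0$ solves $Lw=0$ there and vanishes at the boundary point $x$ together with its gradient, the Hopf boundary point lemma forces $w\equiv 0$, so again $\Sigma_{1}=\Sigma_{2}$ near $x$. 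Finally, to pass from local to global, one uses that translators are real-analytic (being minimal for the analytic metric $\operatorname{G}$, or directly by elliptic regularity, since $\mathcal{Q}$ has real-analytic coefficients): the set of points of the connected surface $\Sigma_{1}$ near which $\Sigma_{1}$ and $\Sigma_{2}$ coincide is open, nonempty by the above, and closed by analytic continuation, hence all of $\Sigma_{1}$; by the symmetric argument $\Sigma_{2}\subset\Sigma_{1}$ as well, so $\Sigma_{1}=\Sigma_{2}$.

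I expect the main obstacle to be the bookkeeping in the boundary case~(b): checking that the hypothesis $\partial\Sigma_{i}\subset\Pi$, together with the one-sided contact, really puts the two graphs over domains whose boundary through $x$ is smooth and satisfies an interior ball condition, so that the Hopf boundary point lemma applies, and treating separately the degenerate configuration $T=\Pi$, in which one cannot graph over $\Pi$ and a different argument is needed. Everything else --- the graph form of the translator equation, its uniform ellipticity at a contact point, the linearization, and the analytic continuation --- is routine.
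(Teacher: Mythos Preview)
Your argument is correct, and you even mention the paper's own route in your opening parenthetical. The paper proceeds entirely by that alternative: it observes that translators are minimal hypersurfaces for the real-analytic conformal metric $\operatorname{G}_p=e^{\frac{2}{m}\langle p,\v\rangle}\langle\cdot,\cdot\rangle$ on $\real{m+1}$, deduces real-analyticity from this, and then invokes Eschenburg's interior and boundary tangency principles for minimal hypersurfaces in a Riemannian manifold as a black box. Your approach instead unpacks the mechanism directly---writing both surfaces as graphs over the common tangent plane, linearizing the difference of the translator operators, and applying Hopf's strong maximum principle and boundary point lemma---which is exactly how Eschenburg's result is proved. The paper's route is shorter and sidesteps the bookkeeping you flag in case~(b) (the domain geometry near a boundary contact, the degenerate configuration $T=\Pi$), at the cost of relying on an external reference; your route is self-contained and makes transparent why no zeroth-order term appears in $Lw$. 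One remark: you correctly add the one-sided hypothesis (``$\Sigma_1$ locally on one side of $\Sigma_2$''), which the theorem statement omits but which is needed for the maximum principle to bite---the paper's citation of Eschenburg carries the same implicit assumption, and it is precisely what holds in the Alexandrov reflection application.
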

\begin{proof}
It is well known that translating solitons can be considered as minimal hypersurfaces in the conformally changed metric
$$\operatorname{G}_p:=e^{\frac{2}{m}\langle p,\v\rangle}\langle\cdot\,,\cdot\rangle$$
on $\real{m+1}$. Therefore, translators are real analytic hypersurfaces. Consequently, if two translators coincide in an open
neighborhood they should coincide everywhere. The theorem follows now from the interior and boundary tangency principle for embedded
minimal hypersurfaces in a Riemannian manifold. For a nice exposition of these principles we recommend the beautiful paper of Eschenburg
 \cite[Theorem 1 and Theorem 1a]{eschenburg}.
\end{proof}

\subsection{Global characterizations} We shall conclude this section with some characterizations of translators.

\begin{mythm}
Let $f:M^m\to\real{m+1}$ be a translating soliton which is not a minimal hypersurface.
Then $f(M^m)$ is a grim hyperplane if and only if the function $|A|^2H^{-2}$ attains a local maximum
on $M^m-\{H=0\}$.
\end{mythm}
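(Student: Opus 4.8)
The plan is to prove the two implications separately, the ``only if'' direction being elementary and the ``if'' direction resting on a strong maximum principle for the function $\log\bigl(|A|^2H^{-2}\bigr)$. First, if $f(M^m)$ is a grim hyperplane then $f$ has a single non-zero principal curvature $\kappa$, so $H=\pm\kappa\neq 0$ everywhere and $|A|^2H^{-2}\equiv 1$; a constant function trivially attains a local maximum on $M^m-\{H=0\}=M^m$. For the converse, let $p\in M^m-\{H=0\}$ be a point where $|A|^2H^{-2}$ has a local maximum. Since $H(p)\neq 0$, after reversing the orientation of $f$ if necessary we may assume $H>0$ on a connected neighbourhood $W$ of $p$; there $|A|^2\ge H^2/m>0$ by Cauchy--Schwarz, so
\begin{equation*}
\psi:=\log\bigl(|A|^2H^{-2}\bigr)
\end{equation*}
is a well-defined real-analytic function on $W$ (recall from the proof of Theorem \ref{tangency} that translators are real analytic), and it has a local maximum at $p$.

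The heart of the argument is to show that $\psi$ is a subsolution of a linear elliptic operator near $p$. Using Lemma \ref{lemm rel} (f) for $\Delta H$ and the Simons-type identity of Lemma \ref{lemm rel} (h) for $\Delta|A|^2$, a direct computation yields the exact identity
\begin{equation*}
\Delta\psi+\langle\nabla\psi,\nabla u\rangle=\frac{2|\nabla A|^2}{|A|^2}-\frac{\bigl|\nabla|A|^2\bigr|^2}{|A|^4}+\frac{2|\nabla H|^2}{H^2}.
\end{equation*}
To handle the right-hand side I would split $\nabla A$ pointwise: for an orthonormal frame $\{e_1,\dots,e_m\}$ write $\nabla_{e_k}A=\mu_k A+B_k$ with $\langle A,B_k\rangle=0$, so that $\nabla_k|A|^2=2\mu_k|A|^2$, $|\nabla A|^2=|A|^2|\mu|^2+|B|^2$, and (taking traces) $\nabla_k H=\mu_k H+\operatorname{tr}B_k$. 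Substituting these in, and noting that $\nabla_k\psi=-2H^{-1}\operatorname{tr}B_k$, the right-hand side collapses to
\begin{equation*}
\frac{2|B|^2}{|A|^2}-\frac{1}{|A|^2}\bigl\langle\nabla|A|^2,\nabla\psi\bigr\rangle+\frac12|\nabla\psi|^2,
\end{equation*}
so that on $W$
\begin{equation*}
\Delta\psi+\Bigl\langle\nabla\psi,\ \nabla u+\tfrac{1}{|A|^2}\nabla|A|^2\Bigr\rangle=\frac{2|B|^2}{|A|^2}+\frac12|\nabla\psi|^2\ \ge\ 0.
\end{equation*}
Hence $\psi$ satisfies $L\psi\ge 0$ for a second-order elliptic operator $L$ with smooth coefficients on $W$, and it attains an interior maximum at $p$.

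By E.~Hopf's strong maximum principle $\psi$ is constant near $p$, hence constant on the whole connected component $U$ of $p$ in $M^m-\{H=0\}$ by real analyticity. Feeding $\nabla\psi\equiv 0$ back into the identity gives $B\equiv 0$ on $U$, i.e.\ $\nabla A=\mu\otimes A$, and taking traces $\mu=\nabla(\log H)$, so $\mu$ is a gradient; the Ricci identity then gives $[\nabla_k,\nabla_l]A=0$, and together with the Gauss equation this forces $\kappa_i\kappa_j(\kappa_i-\kappa_j)=0$ for any two principal curvatures. Thus at every point of $U$ all non-zero principal curvatures agree; calling their common value $\lambda$ and its multiplicity $r$ we get $|A|^2H^{-2}=r^{-1}$, so $r$ is a fixed integer. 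If $r\ge 2$, the Codazzi identity (total symmetry of $\nabla A$) reads $\mu_iA_{jk}=\mu_jA_{ik}$, which applied to $A=\lambda P$ with $P$ a rank-$r$ orthogonal projection forces $\mu\equiv 0$; then $H$ is constant on $U$ and Lemma \ref{lemm rel} (f) degenerates to $H|A|^2\equiv 0$, impossible since $H>0$ and $|A|^2>0$ on $U$. Therefore $r=1$: $A=H\,\nu\otimes\nu$ for a local unit field $\nu$ spanning $\operatorname{Im}A$, and differentiating this using $\nabla A=\mu\otimes A$ and $\nabla H=\mu H$ gives $\nabla\nu=0$. Consequently $\operatorname{span}(\nu)$ and its orthogonal complement $\mathcal D$ are parallel distributions, so by the de Rham splitting theorem $U$ locally factors as $U_1^1\times U_2^{m-1}$ with $U_2$ tangent to $\mathcal D$; since $A$ vanishes on $\mathcal D$, the factor $U_2$ is flat and totally geodesic in $\real{m+1}$, $df(\mathcal D)$ is a parallel subbundle, and hence $f$ maps $U$ onto an open piece of a product $\gamma\times\real{m-1}$ of a planar curve with an affine $(m-1)$-plane. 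The equation $\H=\v^\perp$ restricts to the planar translator equation for $\gamma$, so $\gamma$ is a grim reaper, and by analyticity of $f$ we conclude that $f(M^m)$ is a grim hyperplane.

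The main obstacle is the elliptic estimate of the second paragraph: one has to recognize that, after subtracting the natural first-order term, the remainder is exactly the manifestly nonnegative quantity $\tfrac{2|B|^2}{|A|^2}+\tfrac12|\nabla\psi|^2$, and in particular that the right object to feed into the maximum principle is $\log(|A|^2H^{-2})$ rather than $|A|^2H^{-2}$ itself --- for the latter the analogous identity carries a spurious gradient term of the wrong sign. Once $\psi$ is known to be constant, the remainder is a fairly standard ``pinching forces splitting'' argument, driven by the recurrence relation $\nabla A=\mu\otimes A$ together with the Gauss and Codazzi equations.
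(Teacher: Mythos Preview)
Your argument is correct and runs parallel to the paper's, with a few organizational differences. Both proofs feed a function of $|A|^2H^{-2}$ into the strong maximum principle to conclude $H\nabla A=\nabla H\otimes A$ (your $B\equiv 0$), then analyze this condition. One correction to your commentary: the paper works directly with $h=|A|^2H^{-2}$ and obtains
\[
\Delta h+\bigl\langle\nabla h,\ \nabla u+2H^{-1}\nabla H\bigr\rangle=2H^{-4}\bigl|H\nabla A-\nabla H\otimes A\bigr|^2\ge 0,
\]
so passing to the logarithm is not necessary; the drift $\nabla u+2H^{-1}\nabla H$ is smooth on $\{H\neq 0\}$ and the sign is already right. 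Your $\log$-version is of course equally valid.

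The real divergence is in the structural step. The paper splits into the case $H$ constant, where $\nabla A\equiv 0$ and Lawson's rigidity theorem is invoked to reach a contradiction, and the case $\nabla H\not\equiv 0$, where choosing $e_1=\nabla H/|\nabla H|$ in the Codazzi relation shows directly that $A$ has rank one, followed by a nullity-distribution/ODE argument to prove $\mathscr D^\perp$ is autoparallel. Your route is more self-contained: from $\mu=d\log H$ being closed you get $[\nabla_k,\nabla_l]A=0$, and the Gauss equation then forces $\kappa_i\kappa_j(\kappa_i-\kappa_j)=0$; when the rank $r\ge 2$, Codazzi $\mu_iA_{jk}=\mu_jA_{ik}$ gives $\mu\equiv 0$, hence $H$ constant and Lemma~\ref{lemm rel}(f) collapses to $H|A|^2=0$ --- a direct contradiction that avoids Lawson's theorem altogether. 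For $r=1$ your computation $\nabla\nu=0$ replaces the paper's ODE argument and immediately yields parallel distributions for the de Rham splitting. The only place your write-up is thinner than the paper's is the passage from the intrinsic product to the extrinsic one: that $f(U)\subset\gamma\times\real{m-1}$ inside $\real{m+1}$ uses that the nullity leaves are totally geodesic in $\real{m+1}$ with constant Gauss map (hence parallel affine $(m-1)$-planes), which the paper makes explicit via the Ferus reference and you leave implicit.
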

\begin{proof}
Inspired by ideas developed by Huisken \cite{huisken}, consider the smooth function $h:U\to\real{}$, $U:=M-\{ H=0 \}$,
given by $h=|A|^2H^{-2}.$ Notice at first that
$$\nabla h=\frac{\nabla|A|^2}{H^2}-2h\frac{\nabla H}{H},$$
provided $H\neq 0$.
Let us now relate the Laplacian of the function $h$ with the quantities $H$, $A$ and $u$. We have,
\begin{eqnarray*}
\Delta h&=&H^{-2}\Delta|A|^2+|A|^2\Delta H^{-2}+2\langle\nabla|A|^2,\nabla H^{-2}\rangle\\
&=&H^{-2}\Delta|A|^2-2H^{-3}|A|^2\Delta H\\
&&+6H^{-4}|A|^2|\nabla H|^2-4H^{-3}\langle\nabla|A|^2,\nabla H\rangle.
\end{eqnarray*}
 By virtue of Lemma \ref{lemm rel}, we get
\begin{equation*}
\Delta h+H^{-1}\langle\nabla h,H\nabla u+2\nabla H\rangle-2H^{-4}Q^2=0,
\end{equation*}
where
\begin{eqnarray*}
Q^2:&=&H^2|\nabla A|^2+|A|^2|\nabla H|^2-H\langle\nabla H,\nabla|A|^2\rangle \nonumber\\
&=&\big|H\nabla A-\nabla H\otimes A\big|^2.
\end{eqnarray*}

Since $h$ attains a local maximum, by the strong maximum principle we obtain now that $h$ is constant
and $Q^2$ is identically zero. Thus,  for any triple of indices $i,j,k$, it holds
\begin{equation}\label{q}
H(\nabla_{e_i}A)(e_j,e_k)-e_i(H)A(e_j,e_k)=0,
\end{equation}
where here $\{e_1,\dots,e_m\}$ is a local orthonormal frame in the tangent bundle of the hypersurface.
From the last identity and the Codazzi equation we see that
\begin{equation}\label{q2}
e_{i}(H)A(e_j,e_k)-e_j(H)A(e_i,e_k)=0.
\end{equation}
for any triple of indices $i,j,k.$

{\bf Case 1:} Assume at first that $H$ is constant. Since by assumption $f$ is not minimal, this
constant is non-zero. Then, from equation (\ref{q}) it follows that
$$|\nabla A|=0.$$
Then, all the principal curvatures of $f$ are constant. Due to a theorem of Lawson \cite[Theorem 4]{lawson}, it follows that
$f(M^m)$ is locally isometric to a round sphere or to a product of a round sphere with a euclidean factor. However, none
of these examples are translators and consequently this situation cannot happen.

{\bf Case 2:} Assume now that there is a simply connected neighborhood $V$ where $|\nabla H|$ is not zero.
In the neighborhood $V$, choose the frame field $\{e_1,\dots,e_m\}$ such that
$$e_1=\frac{\nabla H}{|\nabla H|}.$$
Then, from equation (\ref{q2}), we obtain
$A(e_j,e_k)=0,$
for any $k\ge 1$ and $j\ge 2$. Therefore, $f$ has only one non-zero principal curvature. Denote now by
$\mathscr{D}:M^m\to TM^m$,
$$\mathscr{D}(x):=\{v\in T_xM^m:A(v,\cdot\,)=0\},$$
the nullity distribution and by
$$\mathscr{D}^{\perp}:=\operatorname{span}\{e_1\},$$
its orthogonal complement. It is well known that the distribution $\mathscr{D}$ is smooth
and integrable. Moreover, $\mathscr{D}$ is an autoparallel distribution, that is for any $X,Y\in\mathscr{D}$ it follows that
$\nabla_{X}Y\in\mathscr{D}$. The integral submanifolds of $\mathscr{D}$ are totally geodesic in $M^m$ and their images
via the immersion $f$ are totally geodesic submanifolds of $\real{m+1}$. Furthermore, the Gau{\ss} map of the immersion
$f$ is constant along the leaves of $\mathscr{D}$. A classical reference for the proofs of these facts is the paper of Ferus
\cite[Lemma 2, p. 311]{ferus}.

We claim now that $\mathscr{D}^{\perp}$ is also autoparallel and its integral curves are geodesics of $M$. Indeed,
fix a point $x_0$ and set $e=e_1(x_0)$. Extend the vector $e$ by parallel transport to a vector field along
the integral curve
$\gamma:(-\varepsilon,\varepsilon)\to M^m$ of $e_1$ passing through $x_0$. Let $B:TM^m\to TM^m$ be the Weingarten
operator associated to $A$. From the equation (\ref{q}), we obtain that
$$(\nabla_{e_1}B)v=e_1(H)v,$$
for any $v$ in $TM^m$.

A straightforward computation yields
\begin{eqnarray*}
\dt\left(|(B-H\cdot\operatorname{Id})e|^2\right)&=&2\langle(\nabla_{e_1}B)e-e_1(H)e,Be-He\rangle\\
&=&2\langle e_1(H)e-e_1(H)e,Be-He\rangle\\
&=&0.
\end{eqnarray*}
Therefore $e=e_1$ is a parallel vector field along the integral curves of $e_1$.
Thus,
$$\nabla_{e_1}e_1=0$$
and so the orthogonal complement $\mathscr{D}^{\perp}$ of $\mathscr{D}$ is autoparallel in $TM^m$ .

Observe now that
$$TM^m=\mathscr{D}\oplus\mathscr{D}^{\perp}.$$
We claim now that both the above distributions are parallel. Indeed, let $X,Y\in\mathscr{D}$ and $Z\in\mathscr{D}^{\perp}$.
Then,
$$0=X\langle Y,Z\rangle=\langle\nabla_{X}Y,Z\rangle+\langle Y,\nabla_XZ\rangle.$$
Hence, for any $X\in TM^m$ and $Z\in\mathscr{D}^{\perp}$ we have that $\nabla_XZ\in\mathscr{D}^{\perp}$.
This means that the distribution $\mathscr{D}^{\perp}$ is actually parallel. Similarly, one can show that the
nullity distribution $\mathscr{D}$ is also parallel. Hence, form the de Rham decomposition theorem, the hypersurface $f(M^m)$ splits as the Cartesian product
of a plane curve $\Gamma$ with a euclidean factor $\real{m-1}$.  Obviously,
this planar curve $\Gamma$ must be a grim reaper. Consequently, $f(M^m)$ contains an open neighborhood that is
part of a grim hyperplane. Because of the real analyticity, according to the analytic continuation theorem
\cite[Theorem1, p.213]{sampson} we deduce that $f(M^m)$ should coincide everywhere with a grim
hyperplane. This completes the proof.
\end{proof}

\begin{corollary}
Let $f:M^m\to\real{m+1}$ be a translating soliton with zero scalar curvature. Then either
$f(M^m)$ is a grim hyperplane or $f(M^m)$ is a totally geodesic hyperplane tangent to $\v$.
\end{corollary}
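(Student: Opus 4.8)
The plan is to deduce the corollary directly from Theorem~A. The first step is to rewrite the curvature hypothesis: by the Gau\ss\ equation, the scalar curvature of the hypersurface $f:M^m\to\real{m+1}$ equals $H^2-|A|^2$, so ``zero scalar curvature'' amounts to the pointwise identity $|A|^2=H^2$ on all of $M^m$. Once this is noted, I would split into two cases according to whether or not $f$ is minimal, since Theorem~A requires non-minimality in its hypothesis.

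In the minimal case one has $H\equiv 0$, and then the identity $|A|^2=H^2$ forces $|A|^2\equiv 0$; hence $f$ is totally geodesic and $f(M^m)$ is a hyperplane. To see that this hyperplane is tangent to $\v$, I would use the soliton equation~(\ref{trans 1}) itself: minimality means $\H=0$, so $\v^\perp=0$, i.e.\ $\v=\v^\top$ is everywhere tangential to $f$. This is the second alternative in the statement.

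In the non-minimal case, $H$ does not vanish identically, so by continuity $U:=M^m\smi\{H=0\}$ is a nonempty open set. On $U$ the function $h:=|A|^2H^{-2}$ is well defined and, by the identity from the first step, is identically equal to $1$; being constant, it attains a local maximum at every point of $U$. Theorem~A then yields that $f(M^m)$ is a grim hyperplane, which is the first alternative.

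I do not anticipate a genuine obstacle: the corollary is a short consequence of Theorem~A. The only points deserving an explicit word are the passage from vanishing scalar curvature to $|A|^2=H^2$ via the Gau\ss\ equation, the remark that $U$ is nonempty precisely when $f$ is not minimal (so that Theorem~A is applicable), and the trivial fact that a constant function attains a local maximum; the minimal case is then settled straight from the soliton equation, with no appeal to Theorem~A.
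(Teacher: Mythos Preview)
Your proposal is correct and follows essentially the same route as the paper's own proof: use the Gau\ss\ equation to convert zero scalar curvature into the identity $|A|^2=H^2$, dispose of the minimal case as totally geodesic and tangent to $\v$, and in the non-minimal case observe that $|A|^2H^{-2}\equiv 1$ on the (nonempty) open set where $H\neq 0$, so Theorem~A applies. Your write-up is in fact slightly more explicit than the paper's about why the hyperplane in the minimal case must be tangent to $\v$.
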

\begin{proof}
Note that under our assumptions,
$$|A|^2=H^2-\operatorname{scal}=H^2.$$
Hence, if $H$ is identically zero then $|A|^2$ is identically zero and $f(M^m)$ must be a totally geodesic hypersurface
tangent to $\v$.
Suppose now that there is a point where $H$ is not zero. In this case, there is an open neighborhood
where  the function $|A|^2H^{-2}$ is well-defined and equals $1$.
Consequently, the above theorem implies that $f(M^m)$ coincides with a grim
hyperplane.
\end{proof}
Similarly we can prove the following:
\begin{corollary}
Let $f:M^m\to\real{m+1}$ be a translating soliton with $H>0$ and $\operatorname{scal}\ge 0$. Then either
$f(M^m)$ is a grim hyperplane or $\operatorname{scal}>0$ everywhere.
\end{corollary}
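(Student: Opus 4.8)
The plan is to argue exactly as in the proof of the previous corollary. By the Gau\ss{} equation the scalar curvature of a hypersurface of $\real{m+1}$ satisfies $\operatorname{scal}=H^2-|A|^2$, so the hypothesis $\operatorname{scal}\ge 0$ says precisely that $|A|^2\le H^2$ everywhere on $M^m$. Since $H>0$ on all of $M^m$, we have $M^m-\{H=0\}=M^m$, and the function $h:=|A|^2H^{-2}$ --- the same one used in the proof of Theorem A --- is smooth on the whole manifold and bounded above by $1$.

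Next I would split into two cases. If $\operatorname{scal}(x_0)=0$ for some $x_0\in M^m$, then $h(x_0)=1$, which is the global maximum value of $h$; in particular $h$ attains a local maximum on $M^m-\{H=0\}=M^m$. Since $H>0$ implies that $f$ is not a minimal hypersurface, Theorem A applies and forces $f(M^m)$ to be a grim hyperplane. If instead $\operatorname{scal}$ has no zero on $M^m$, then by hypothesis $\operatorname{scal}>0$ everywhere, which is the other alternative in the statement.

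There is essentially no obstacle here: the only point to keep track of is that $\operatorname{scal}\ge 0$ is used solely to produce the one-sided estimate $h\le 1$, so that any zero of $\operatorname{scal}$ automatically becomes a (global, hence local) maximum of $h$, and the entire analytic work --- the Bochner-type identity for $h$ and the application of the strong maximum principle --- is already subsumed in Theorem A. One should only be mildly careful that, because $H>0$ on all of $M^m$, the set $M^m-\{H=0\}$ occurring in the hypothesis of Theorem A is the whole manifold, so the local maximum of $h$ indeed lies in the admissible region where the theorem is applicable.
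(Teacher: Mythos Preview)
Your proof is correct and follows exactly the approach the paper intends: use $\operatorname{scal}=H^2-|A|^2$ to bound $h=|A|^2H^{-2}\le 1$, note that a zero of $\operatorname{scal}$ forces $h$ to attain its maximum value $1$, and then invoke Theorem~A. The paper in fact writes only ``Similarly we can prove the following'' in lieu of a proof, so your argument is precisely the intended one.
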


\begin{theorem}
Let $f:M^m\to\real{m+1}$ be a weakly-convex translator. If there is a point
where the Gau{\ss}-Kronecker curvature vanishes, then the Gau{\ss}-Kronecker curvature vanishes everywhere.
\end{theorem}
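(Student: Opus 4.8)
The plan is to derive a clean elliptic equation for the full scalar second fundamental form $A$ and then to combine it with the sign condition coming from weak convexity by means of a strong maximum principle for symmetric $2$--tensors. I would start from Simons' identity \cite{simons} for a hypersurface of $\real{m+1}$, which in the notation of the previous subsection reads $\lap A=\nabla^2H+HA^{[2]}-|A|^2A$, and insert the expression for $\nabla^2H$ from Lemma \ref{lemm rel}(f), namely $\nabla^2H=-\nabla_{\nabla u}A-HA^{[2]}$. The two cubic terms cancel and one is left with
$$\lap A+\nabla_{\nabla u}A+|A|^2A=0,$$
that is, $A$ solves $\mathcal{L}A=-|A|^2A$ for the drift Laplacian $\mathcal{L}:=\lap+\langle\nabla u,\nabla\,\cdot\,\rangle$. (As a consistency check, taking the trace of this identity reproduces Lemma \ref{lemm rel}(f) and contracting it with $A$ reproduces Lemma \ref{lemm rel}(h).)

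By weak convexity we may orient $f$ so that $A\ge 0$ on all of $M^m$; then the Gau\ss--Kronecker curvature vanishes at a point precisely when the lowest eigenvalue $\mu:=\lambda_{\min}(A)\ge 0$ vanishes there, i.e.\ when $\ker A\ne\{0\}$ there. The decisive structural feature of the equation for $A$ is the null--eigenvector condition: the reaction term satisfies $(-|A|^2A)(v,v)=0$ for every $v\in\ker A$. I would then run the standard barrier argument. Fix $p\in M^m$, let $v$ be a unit eigenvector of $A(p)$ for the eigenvalue $\mu(p)$, extend it to a local unit vector field $\bar v$ by parallel transport along the geodesics emanating from $p$ (so that $\nabla\bar v(p)=0$), and set $\phi:=A(\bar v,\bar v)$, so that $\phi\ge\mu\ge 0$ with $\phi(p)=\mu(p)$. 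A short computation using $\nabla\bar v(p)=0$ gives $\mathcal{L}\phi(p)=(\mathcal{L}A)(p)(v,v)=-|A|^2(p)\,\phi(p)$. Comparing $\phi$ with any $C^2$ function touching $\mu$ from below at $p$, one concludes that $\mu$ is a viscosity supersolution of $\mathcal{L}w+|A|^2w\le 0$; since $\mu\ge 0$ and $|A|^2\ge 0$ this in turn gives $\mathcal{L}\mu\le 0$ in the viscosity sense. (Alternatively one may invoke directly Bony's strong maximum principle for nonnegative symmetric $2$--tensors, in the form in which it is applied to minimal and constant mean curvature hypersurfaces.)

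Now the set $Z:=\{\det A=0\}=\{\mu=0\}$ is closed and, by hypothesis, nonempty. Since $\mu\ge 0$ is a continuous supersolution of the locally uniformly elliptic operator $\mathcal{L}$ on the connected manifold $M^m$ and attains its minimum value $0$ at an interior point, the strong minimum principle forces $\mu$ to vanish on an entire neighborhood of that point, so $Z$ is also open. Connectedness of $M^m$ then gives $Z=M^m$, i.e.\ the Gau\ss--Kronecker curvature vanishes identically.

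The part I expect to require the most care is not the derivation of the equation for $A$ — which falls out of Simons' identity and Lemma \ref{lemm rel}(f) — but the passage from the pointwise identity $\mathcal{L}\phi(p)=-|A|^2(p)\phi(p)$ to the global differential inequality for $\mu$: one has to keep track of the cross terms generated by $\nabla\bar v$ away from $p$ (which are harmless because $A\ge 0$), to note that $\mu$ is only Lipschitz so that the maximum principle must be used in the viscosity/barrier formulation, and to observe that the zeroth--order coefficient $|A|^2$ has the ``wrong'' sign for the maximum principle, which however is immaterial precisely because $\mu\ge 0$.
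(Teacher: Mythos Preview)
Your proposal is correct and follows essentially the same route as the paper: derive the drift--Laplacian equation $\Delta A+\nabla_{\nabla u}A+|A|^2A=0$ and then invoke the strong maximum principle for nonnegative symmetric $2$--tensors (the paper simply cites Hamilton \cite{hamilton} and \cite{savas} for this, while you unpack the barrier/viscosity argument explicitly). One small remark: the Hessian identity $\nabla^2H=-\nabla_{\nabla u}A-HA^{[2]}$ that you attribute to Lemma \ref{lemm rel}(f) is actually the intermediate formula appearing in the \emph{proof} of that item rather than its statement, so you should cite it accordingly.
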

\begin{proof}
We have that
$$\Delta A+\langle \nabla A,\nabla u\rangle+|A|^2A=0.$$
By the assumptions $A$ is a non-negative symmetric $2$-tensor. If there is a point where the smallest
principal curvature vanishes, from the strong elliptic maximum principle for tensors (see for example \cite{hamilton}
or  \cite[Section 2]{savas}), we get that the smallest principal curvature of $f$ vanishes everywhere.
\end{proof}

\section{Uniqueness of the translating paraboloid}\label{alexandrov}

The aim of this section is to show that a complete embedded translating soliton of the mean curvature
flow with a single end which is asymptotic to the rotationally symmetric translating paraboloid, must be
a translating paraboloid. The proof exploits the method of moving planes which was first introduced by
Alexandrov \cite{alexandrov} for the investigation of compact hypersurfaces with constant mean curvature
in a euclidean space. However, our approach follows ideas developed by Schoen \cite{schoen} where he
applied the method of moving planes to minimal hypersurfaces of the euclidean space.

\subsection{A uniqueness theorem}
Before stating and proving the main result of this section we have to introduce some notation and definitions.
Denote by $\pg:\real{m+1}\to\Pi$ the orthogonal projection
to the plane
$$\Pi:=\{(x_1,\dots,x_{m+1})\in\real{m+1}:x_1=0\},$$
that is $$\pg(x_1,\dots,x_{m+1}):=(0,x_2,\dots,x_{m+1}).$$
\begin{definition}\label{setbigger}
Let $A$ and $B$ be two arbitrary subsets of $\real{m+1}$. We say that the set $A$ is on the \textit{right hand side} of $B$
and write $A\ge B$ if and only if for every point $x\in\Pi$ for which
$$\pg^{-1}(x)\cap A\neq\emptyset\quad\text{ and }\quad\pg^{-1}(x)\cap B\neq\emptyset,$$
we have that
\begin{equation*}
\inf\big[x_1\big\{\pg^{-1}(x)\cap A\big\}\big]\ge\sup\big[x_1\big\{\pg^{-1}(x)\cap B\big\}\big],
\end{equation*}
where here $x_1\{P\}$ denotes the $x_1$-component of the point $P\in\real{m+1}$.
\end{definition}
Note that \textquotedblleft\,$\ge"$ is not a well ordered relation.
Observe also that there are subsets of $\real{m+1}$ that cannot be related to each other. In the matter of fact this relation is neither reflexive nor transitive.

Consider now the family of planes $\{\Pi(t)\}_{t\ge 0}$ given by
$$\Pi(t):=\{(x_1,\dots,x_{m+1})\in\real{m+1}:x_1=t\}.$$
Given a subset $A$ of $\real{m+1}$ let us also define the following subsets:
\begin{eqnarray*}
\delta_t(A):&=&\{(x_1,\dots,x_{m+1})\in A\; : \; x_1=t \}=A\cap\Pi(t),\\
A_+(t):&=&\{ (x_1,\dots,x_{m+1}) \in A \; : \; x_1 \geq t \},\\
A_-(t):&=&\{ (x_1,\dots,x_{m+1}) \in A \; : \; x_1 \leq t \},\\
A_+^\ast(t):&=&\{ (2 t-x_1,\dots,x_{m+1})\in\real{m+1}:  (x_1,\dots,x_{m+1}) \in A_+(t) \},\\
A_-^\ast(t):&=&\{ (2 t-x_1,\dots,x_{m+1})\in\real{m+1}:  (x_1,\dots,x_{m+1}) \in A_{-}(t) \},\\
Z_{t}:&=&\{ (x_1,\dots,x_{m+1}) \in\real{m+1}: x_{m+1} > t \}.
\end{eqnarray*}
Note that $A_{+}(t)$ are elements of $A$ that are on the right hand side of the plane $\Pi(t)$
and $A_{-}(t)$ are those elements of $A$ that belong to the left hand side of $\Pi(t)$. The
subset $A^{*}_{+}(t)$ is the reflection of $A_{+}(t)$ with respect to the plane $\Pi(t)$
while $A^{*}_{-}(t)$ stands for the reflection of $A_{-}(t)$ with respect to $\Pi(t)$.

\begin{mythm}\label{reflection}
Let $f:M^m\to\real{m+1}$ be a complete embedded translating soliton of the mean curvature flow
with finite genus and a single end that is smoothly asymptotic to a translating paraboloid. Then
$M=f(M^m)$ is a translating paraboloid.
\end{mythm}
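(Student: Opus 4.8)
The plan is to run Alexandrov's moving plane method \cite{alexandrov}, in the form used by Schoen \cite{schoen}, with the family of hyperplanes $\Pi(t)=\{x_1=t\}$ and the associated reflections $R_t(x_1,x_2,\dots,x_{m+1})=(2t-x_1,x_2,\dots,x_{m+1})$ introduced above. The structural observation that makes this work is that each $\Pi(t)$ is parallel to the translating direction $\v=\operatorname{e}_{m+1}$; hence $R_t$ fixes $\v$ and therefore maps translating solitons to translating solitons with the same $\v$. In particular every reflected cap $M_+^\ast(t)$ is again a translator, so the interior and boundary tangency principles of Theorem \ref{tangency} apply to the pair $\{M_-(t),M_+^\ast(t)\}$, whose boundaries lie in $\Pi(t)$.

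First I fix the geometry. After a horizontal translation we may assume that the translating paraboloid $\mathcal{P}$ to which the end of $M$ is asymptotic has axis of rotation $L:=\R\v\subset\Pi(0)$. Since no compact translator exists (see the remark after Lemma \ref{lemm rel}), $M$ is connected; being of finite genus with a single end, it has finite topology, so outside a compact set it is a single graphical end that is $C^2$-close to $\mathcal{P}$. Because $\mathcal{P}$ is convex and rotationally symmetric about $L$, an elementary computation shows that for every $t>0$ the reflected cap satisfies $\mathcal{P}_+^\ast(t)\ge\mathcal{P}_-(t)$ in the sense of Definition \ref{setbigger}, with equality only along $\Pi(t)$. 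By the smooth asymptotics the same holds for $M$ once the plane is far enough to the right: there is $T>0$ such that $M_+^\ast(t)\ge M_-(t)$ with $M_+^\ast(t)\cap M_-(t)\subset\Pi(t)$ for all $t\ge T$. Moreover, since the inequality for $\mathcal{P}$ is strict near the end, the smooth convergence of $M$ to $\mathcal{P}$ there confines any first contact of $M_+^\ast(t)$ with $M_-(t)$ to a compact region as $t$ decreases, and guarantees that the infima and suprema appearing in Definition \ref{setbigger} are attained.

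Now decrease $t$ and let $t_0\ge 0$ be the infimum of those $\tau$ for which $M_+^\ast(t)\ge M_-(t)$ and $M_+^\ast(t)\cap M_-(t)\subset\Pi(t)$ for all $t\ge\tau$. Suppose $t_0>0$. By continuity $M_+^\ast(t_0)\ge M_-(t_0)$ still holds, and the failure of strictness at $t_0$ yields a first contact point $p$: either an interior point of $M$ where $M_+^\ast(t_0)$ and $M_-(t_0)$ touch, or a point $p\in\delta_{t_0}(M)=M\cap\Pi(t_0)$ with $\operatorname{e}_1\in T_pM$, in which case $M_-(t_0)$ and $M_+^\ast(t_0)$ are translators with boundary in $\Pi(t_0)$ meeting at the boundary point $p$; in both cases the two surfaces share the tangent space at $p$. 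Theorem \ref{tangency} then forces $M_+^\ast(t_0)$ and $M_-(t_0)$ to coincide, so (by analyticity of translators) $R_{t_0}(M)=M$. But then the end of $M$, hence $\mathcal{P}$, would be symmetric about $\Pi(t_0)$, which is impossible since the axis $L$ of $\mathcal{P}$ lies in $\Pi(0)$ and $t_0\neq 0$. Hence no contact occurs for $t>0$, so $t_0=0$ and $M_+^\ast(0)\ge M_-(0)$. Running the same argument with the planes approached from $t=-\infty$ (reflecting the left caps $M_-(t)$) gives $M_+(0)\ge M_-^\ast(0)$, and applying $R_0$ to this relation yields $M_-(0)\ge M_+^\ast(0)$. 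Combining the two, $M_+^\ast(0)=M_-(0)$, i.e. $M$ is invariant under the reflection $R_0$ across $\{x_1=0\}$.

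The direction $\operatorname{e}_1$ was arbitrary among unit vectors orthogonal to $\v$, so the same argument shows that $M$ is invariant under reflection across every hyperplane $\{\langle x,w\rangle=0\}$ with $w\perp\v$ a unit vector; all such hyperplanes contain $L$, hence $M$ is rotationally symmetric about $L$. By the classification of complete rotationally symmetric translating solitons of Clutterbuck, Schn\"urer and Schulze \cite{css}, $M$ is, up to translation, either the translating paraboloid or one of the translating catenoids; since the translating catenoids have two ends while $M$ has a single end, $M$ must be a translating paraboloid. The main obstacle is quantitative rather than conceptual: using the smooth asymptotics both to start the moving planes at $t=T$ and to rule out a first contact escaping to infinity; once the reflected caps are recognized as genuine translators, the application of the tangency principle and the concluding symmetrization are routine.
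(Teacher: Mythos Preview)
Your proof is correct and follows essentially the same approach as the paper: Alexandrov's moving-plane method in the form of Schoen \cite{schoen}, using the tangency principle for translators (Theorem \ref{tangency}) together with the asymptotic behavior of the end both to start the procedure at large $t$ and to confine possible first contacts to a compact region. The paper organizes the argument into four claims and additionally tracks the graphical property of $M_+(t)$ over $\Pi$ as part of the admissible set $\mathcal{A}$, carrying out the asymptotic comparison (your starting step) and the openness argument (your ``failure of strictness at $t_0$'') in more explicit detail than your compressed treatment.
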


\begin{proof}
For the sake of intuition we will present the proof for $m=2$.  The arguments for the general case are analog. From our assumptions it follows that there exists a positive real number $r$ such that $M-B(0,r)$ can
be written as the graph of a function
\begin{equation}\label{eq:g}
g(x_1,x_2)=\frac{1}{2}\big(x^2_1+x^2_2\big)-\frac{1}{2}\log\big(x^2_1+x^2_2\big)+O\left(\frac{1}{\sqrt{x^2_1+x^2_2}}\right),
\end{equation}
where here $B(0,r)$ stands for the open euclidean ball of $\real{3}$ which is centered at the origin and has radius $r$.
Consider now the vectors
$$v_{\vartheta}:=(\cos\vartheta,\sin\vartheta,0),$$
where here $\vartheta\in[0,2\pi)$. Our goal is to show
that for any angle $\vartheta$ the translator $M$ is symmetric with respect to the plane perpendicular to
$v_\vartheta$ passing through the origin of $\real{3}$. Since the rotations around the $x_3$-axis preserve the property of being a translating soliton, we deduce that it suffices to prove the
symmetry only along the plane $\Pi$. To this end, consider the set
$$\mathcal{A}:=\big\{t\in[0,\infty):M_+(t)\text{ is a graph over $\Pi$ and }M^*_{+}(t)\ge M_{-}(t) \big\}.$$
The proof will be finished if we can show that $0$ is contained in the set $\mathcal{A}$. This will be achieved by proving that $\mathcal{A}$ is a non-empty open and
closed subset of the interval $[0,+\infty)$. The proof of this fact will be concluded by several claims.
\begin{figure}[h]\includegraphics[scale=.08]{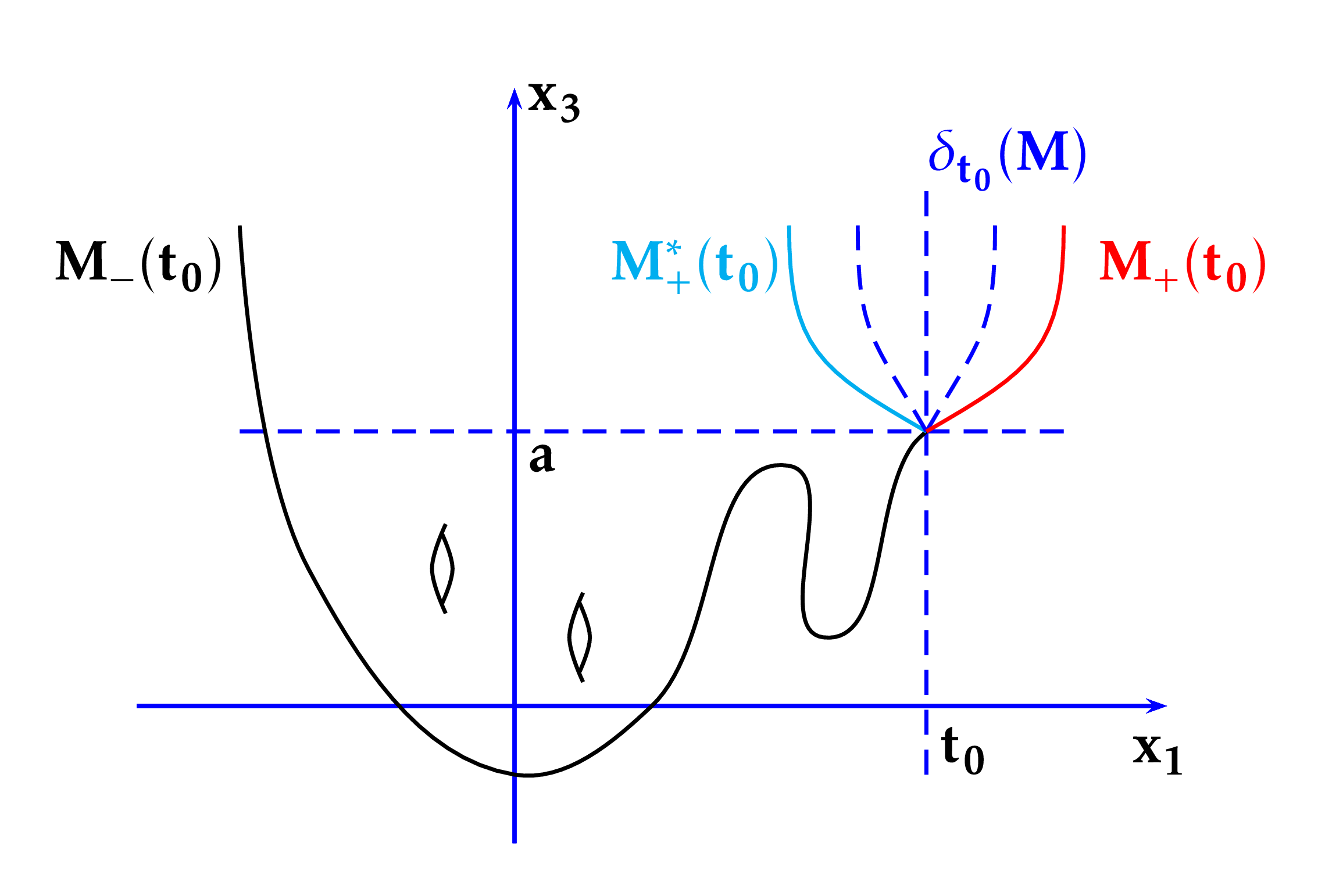}\label{reflection2}\end{figure}

{\bf Claim 1.} {\it The set $\mathcal{A}$ is not empty. In particular, there exists a positive number $t_1$ such that $[t_1,+\infty)\subset\mathcal{A}$.}

We will exploit the asymptotic behavior of the end of the translator to prove the existence of $t_1$. Indeed, choose the radius $r$ sufficiently large.
In fact, one can choose $r$ so large such that the part of the translator $M\cap Z_{r}$ is a horizontal graph (i.e. a graph over the $x_1x_2$-plane)
and that $M_+(r)\cap Z_r, M_-(r)\cap Z_{r}$ are both vertical graphs over $\Pi(r)$.

Choose a number $t_2>r$. Then, the set $M_{+}(t)$ sits outside the ball $B(0,r)$ for any $t\ge t_2$. Because the function
$$\varphi(s):=\frac{1}{2}s^2-\log(s)+O\left(\frac{1}{s}\right)$$
is strictly increasing for sufficiently large values of $s$, we deduce that for sufficiently large $r$
the set $M_{+}(t)$ can be represented as a graph over the plane $\Pi$
for any $t\ge t_2$.

Take $t_1:=2t_2$ and fix a $t\in[t_1,\infty)$. We claim that $M^*_{+}(t)\ge M_{-}(t)$. To prove
this let us represent $M^{*}_{+}(t)$ as the horizontal graph of the function $g_{t}$ given by the expression
\begin{eqnarray*}
g_t(x_1,x_2)&=&\frac{1}{2}\big\{(2t-x_1)^2+x^2_2\big\}\\
&&-\frac{1}{2}\log\big\{(2t-x_1)^2+x^2_2\big\}+O\left(\frac{1}{\sqrt{(2t-x_1)^2+x^2_2}}\right).
\end{eqnarray*}
Comparing the functions $g_t$ and $g$ we get that
\begin{eqnarray*}
g_t(x_1,x_2)&-&g(x_1,x_2)\\
&=&2t(t-x_1)-\frac{1}{2}\log\left\{\frac{4t(t-x_1)}{x^2_1+x^2_2}+1\right\}\\
&&+O\left(\frac{1}{\sqrt{(2t-x_1)^2+x^2_2}}\right)-O\left(\frac{1}{\sqrt{x^2_1+x^2_2}}\right)\\
&\ge&2t(t-x_1)-\frac{1}{2}\log\left\{\frac{4t(t-x_1)}{x^2_1+x^2_2}+1\right\}\\
&&-\frac{C}{\sqrt{(2t-x_1)^2+x^2_2}}-\frac{C}{\sqrt{x^2_1+x^2_2}},
\end{eqnarray*}
where $C$ is a positive constant. Recall that the above relation holds for
$$x^2_1+x^2_2>r^2.$$
In the case where $x_1<t$ and $t\ge t_1>r$ we have that
$$\log\left\{\frac{4t(t-x_1)}{x^2_1+x^2_2}+1\right\}\ge\frac{4t(t-x_1)}{x^2_1+x^2_2},$$
and
\begin{eqnarray*}
(2t-x_1)^2+x^2_2&=&(t+t-x_1)^2+x^2_2\\
&>&t^2+(t-x_1)^2+x^2_2\\
&>&r^2. 
\end{eqnarray*}
Therefore,
\begin{eqnarray*}
g_t(x_1,x_2)-g(x_1,x_2)&>&\frac{2t(t-x_1)(x^2_1+x^2_2-1)}{x^2_1+x^2_2}-\frac{2C}{r}\nonumber\\
&>&\frac{2r(t-x_1)(r^2-1)-2rC}{r^2}.
\end{eqnarray*}
Our aim is to compare $M^{*}_{+}(t)$ with $M_{-}(t)$ following the definition given in relation (\ref{setbigger}).
Observe now that there exists a positive constant $a$, not depending on $r$, such that $t-x_1>a$. Thus, we get that
\begin{equation}\label{asinto}
g_t(x_1,x_2)-g(x_1,x_2)>\frac{2ra(r^2-1)-2rC}{r^2}.
\end{equation}
Therefore, for large $r$, the right hand side of the above inequality is strictly positive. Hence,
\begin{eqnarray*}
&&\Big[M^{\ast}_{+}(t)\cap\big\{(x_1,x_2,x_3)\in\real{3}:x_1\le t_2\big\}\Big]\\
&&\quad\quad\quad\,\,\ge\Big[M_{-}(t)\cap\big\{(x_1,x_2,x_3)\in\real{3}:x_1\le t_2\big\}\Big].
\end{eqnarray*}
Moreover, from the fact that $M_{+}(t_2)$ is a graph over $\Pi$ we deduce that
\begin{eqnarray*}
&&\Big[M^{\ast}_{+}(t)\cap\big\{(x_1,x_2,x_3)\in\real{3}:t_2\le x_1\le t\big\}\Big]\\
&&\quad\quad\quad\,\,\ge\Big[M_{-}(t)\cap\big\{(x_1,x_2,x_3)\in\real{3}:t_2\le x_1\le t\big\}\Big].
\end{eqnarray*}
Hence, $[t_1,+\infty)\subset\mathcal{A}$ and this concludes the proof of the claim.

{\bf Claim 2.} {\it If $s \in \mathcal{A}$ then $[s, +\infty) \subset \mathcal{A}$.}

Suppose that $s\in\mathcal{A}$. This means that $M_{+}(s)$ can be represented as the graph of the height function
$u_s=x_1-s$
that is defined in the region $\Omega_s$. Notice that $\partial\Omega_s=\delta_{s}(M)$. The function $u_s$ cannot attain a local maximum
in the interior of $\Omega_s$, since otherwise we would have a contact point of $M$ with a plane parallel to $\Pi$. But then,
according to the interior tangency principle, $M$ is a plane and this contradicts our assumptions. Thus, the set $\Omega_s$ does not contain
compact connected components. In the matter of fact, because by assumption the end is asymptotic to a paraboloid we deduce that the set
$\Omega_s$ is a connected unbounded closed domain of the plane $\Pi(s)$. Let now $\tilde{s}>s$. Obviously the set $M_{+}(\tilde{s})$ is a
graph over $\Pi(s)$ since $M_{+}(\tilde{s})$ is a part of $M_{+}(s)$. Furthermore, because of the graphical condition of $M_{+}(s)$ we deduce that
$$\min \big[x_3\{\delta_s(M)\}\big]<\min\big[x_3\{\delta_{\tilde{s}}(M)\}\big],$$
where here $x_3\{P\}$ denotes the $x_3$-component of the point $P\in\real{3}$. Moreover, because $\Omega_{\tilde{s}}\subset\Omega_{s}$
and since $M^{\ast}_{+}(s)\ge M_{-}(s)$, we deduce that
$M^{\ast}_{+}(\tilde{s})\ge M_{-}(\tilde{s}).$
Consequently, $\tilde{s}\in\mathcal{A}$ for any $\tilde{s}\ge s$ and so $[s,\infty)\subset\mathcal{A}$.

{\bf Claim 3.} {\it The set $\mathcal{A}$ is a closed subset of the interval $[0,+\infty)$.}

Suppose that $\{t_n\}_{n\in\natural{}}$ is a sequence of points in $\mathcal{A}$ converging to  $t_0$. We have to
prove that $t_0$ is contained in $\mathcal{A}$. Indeed, at first notice that according to Claim 3 we have that
$(t_0,\infty)\subset\mathcal{A}$. Our goal is to prove that  $M_{+}(t_0)$ can be written as the graph of the
height function
$u_{t_0}=x_1-t_0$
which is defined in the region $\Omega_{t_0}$ of $\Pi(t_0)$ and that $M^{\ast}_{+}(t_0)\ge M_{-}(t_0)$.
Let us suppose to the contrary that there are points $P=(x_1,x_2,x_3)$ and $Q=(\tilde{x}_1,x_2,x_3)$ in $M_{+}(t_0)$ such that
$\tilde{x}_1>x_1$. Then we must have $x_1=t_0$ since $s\in\mathcal{A}$ for any $s>t_0$. We distinguish two cases now:
either $P$ belongs in the interior of $\Omega_{t_0}$ or $P$ belongs to the boundary $\delta_{t_0}(M)$ (see Figure 5 below).

\begin{figure}[h]\includegraphics[scale=.08]{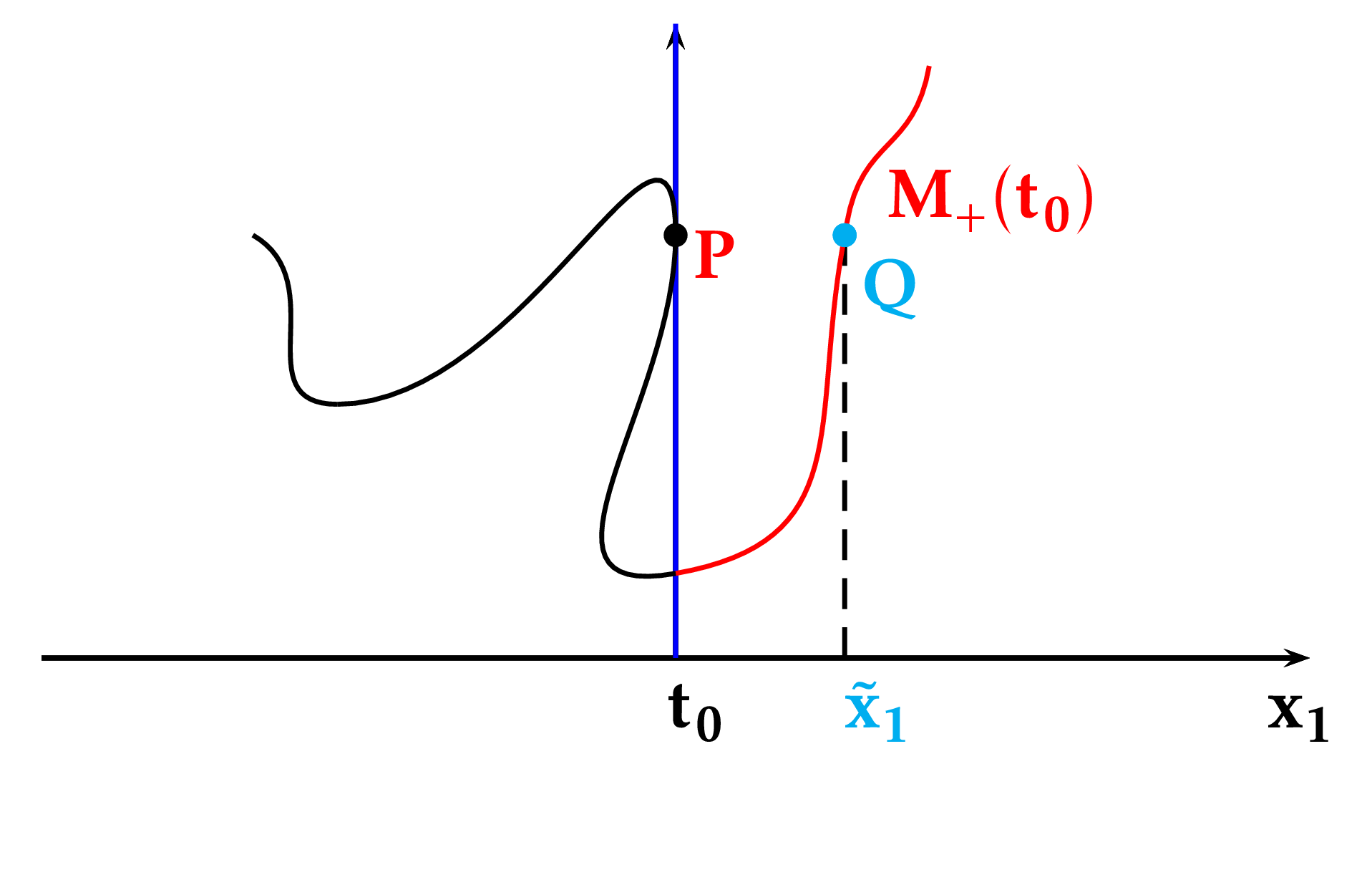}\includegraphics[scale=.08]{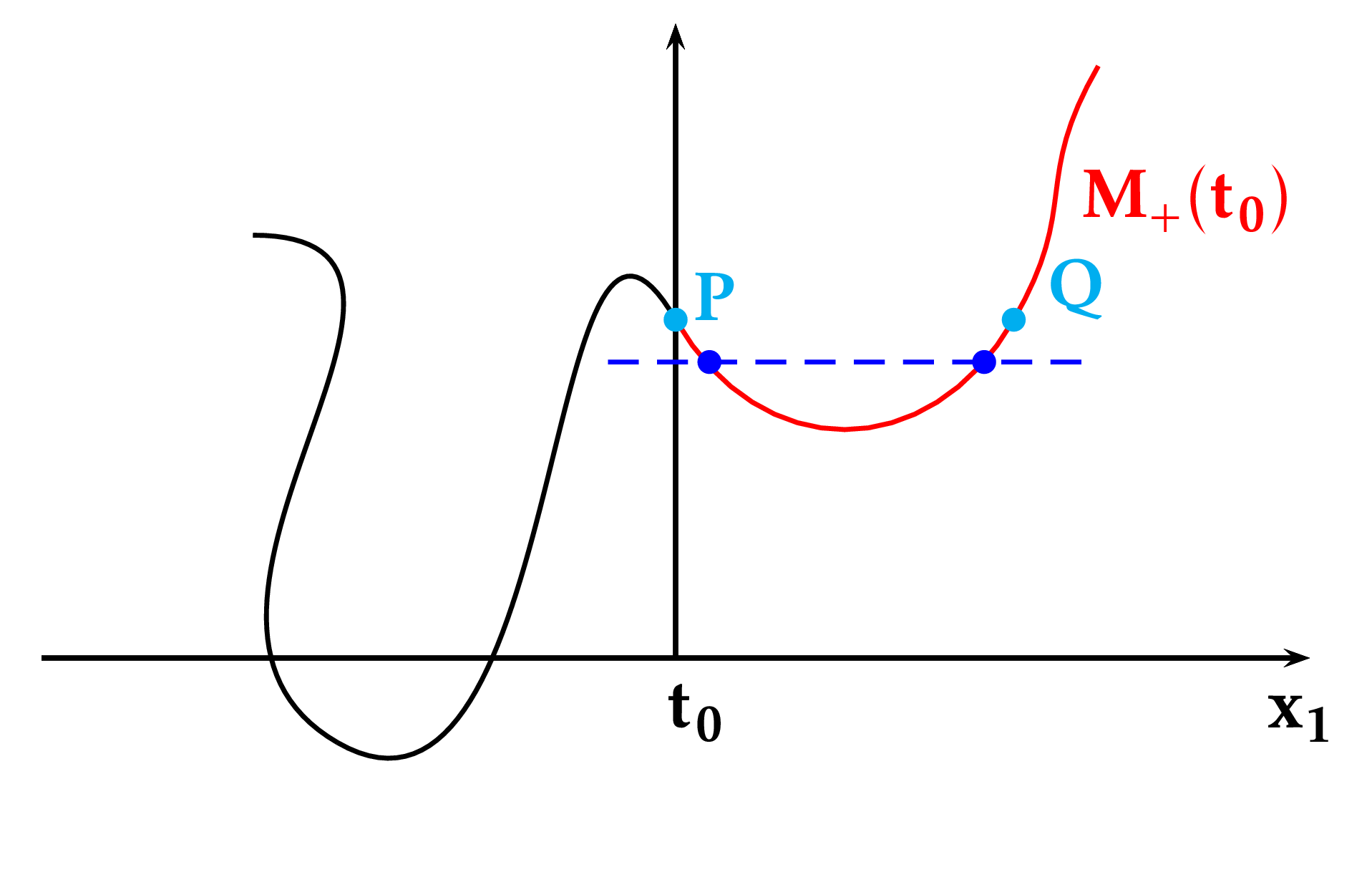}\caption{}\label{reflection1}\end{figure}
We shall show now that neither of the those alternatives can happen. Indeed:

{\it First Case:} Let $\varepsilon$ be a positive number 
such that
$$\tilde{x}_1>x_1+2\varepsilon=t_0+2\varepsilon.$$
From the last inequality we deduce that
$$2(t_0+\varepsilon)-\tilde{x}_1<t_0=x_1,$$
which contradicts the fact $M^{*}_{+}(t_0+\varepsilon)\ge M_{-}(t_0+\varepsilon)$.

{\it Second Case:} Consider the cylindrical solid
$$S:=\big\{(t;z):t\ge t_0,\, z\in \Omega_{t_0}\big\}.$$
Then $S$ intersects $M_{+}(t_0)$ at points outside the plane $\Pi(t_0)$. But this fact contradicts again the graphical property of the family of surfaces
$\{M_{+}(t_0+\varepsilon)\}_{\varepsilon>0}$.

Consequently, the set $M_{+}(t_0)$ can be represented as a graph of the height function over the plane $\Pi$. Moreover, because of
the continuity
$$M^{\ast}_{+}(t_0)\ge M_{-}(t_0).$$
Hence, $t_0\in\mathcal{A}$ and this completes the proof of the claim.

{\bf Claim 4.} {\it The minimum of the set $\mathcal{A}$ is $0$. In particular, $\mathcal{A}=[0,\infty)$.}

We argue again in this proof by contradiction. Suppose to the contrary that  $t_0:=\min\mathcal{A}>0$.
Then we will show that there exists a positive number $\varepsilon$ such that $t_0-\varepsilon\in\mathcal{A}$,
which will be the contradiction.

We will show at first that there exists a positive constant $\varepsilon_1<t_0$ such that $M_{+}(t_0-\varepsilon_1)$ is a graph.
Indeed, from the asymptotic behavior of the end we deduce that there exists positive number $a$ which is
sufficiently bigger than $r$ and such that
\begin{equation} \label{eq:N}
{\rm dist}_{\R^3} \Big[\xi \big\{M_+(t_0) \cap Z_{a} \big\},  \Pi \Big] >0,
\end{equation}
where $\xi:M \rightarrow \mathbb{S}^2$ stands for the Gauss map of $M$.
Note that there is $\varepsilon_0>0$ such that
$M_+(t_0-\varepsilon_0)\cap Z_a$
can be represented as a graph over the plane $\Pi$ and furthermore
\begin{equation} \label{eq:infinity} M_+^*(t_0-\varepsilon_0) 
\cap Z_a \geq M_-(t_0-\varepsilon_0) 
\cap Z_a.
\end{equation}
Consider now the compact set
$$\mathcal{K}:=M \cap \{(x_1,x_2,x_3)\in\real{3}:x_3\le a\}.$$
Taking into account that $t_0 \in \mathcal{A}$, we 
deduce that $\mathcal{K}_+(t_0)$ is a graph over the plane $\Pi$. 
At first notice that there is no point in $\mathcal{K}_+(t_0)-\delta_{t_0}(M)$ with normal vector included in the plane $\Pi$.
Indeed, if this was true then from the tangency principle at the boundary we would get that $M$ is symmetric around an
axis that is not passing through the origin which contradict the assumption on the end of $M$.
Moreover, the same argument yields that there is no point in $\mathcal{K}_+(t_0)\cap\delta_{t_0}(M)$ whose normal vector is lying in the plane $\Pi$. Consequently, 
$$\xi\big\{\mathcal{K}_+ (t_0)\big\} \cap \Pi=\emptyset.$$
Because the set $\mathcal{K}_+(t_0)$ is compact, 
there exists $\varepsilon_1 \in (0,\varepsilon_0]$ small enough such that, for all $t \in [t_0-\varepsilon_1,t_0],$
$$\xi\big\{\mathcal{K}_+(t)\big\} \cap \Pi=\emptyset,$$
Because of this fact and because of the compactness we deduce that the set $\mathcal{K}_+(t)$ can be represented
as graph $\Pi$ for every $t \in [t_0- \varepsilon_1,t_0]$. Consequently, $M_+(t)$ is a graph over the plane $\Pi$,
for all $t \geq t_0-\varepsilon_1$. Hence, the first step of the plan is finished.

Now we will conclude the plan by proving that there exist a positive constant $\varepsilon_2<\varepsilon_1$
such that $M^{\ast}_{+}(t_0-\varepsilon_2)\ge M_{-}(t_0-\varepsilon_2)$. Indeed, at first notice that
$$M_+^*(t)\cap M_-(t)\cap \mathcal{K} \subset \mathcal{K}_-(t_0-\varepsilon_0),$$
for all $t \geq t_0- \varepsilon_1.$
Secondly, because $M^{\ast}_{+}(t_0)\ge M_{-}(t_0)$, we have that
$$M_+^*(t_0) \cap M_-(t_0)=\delta_{t_0}(M).$$
Indeed this holds true, since otherwise any point lying in the set
$$\big\{M_+^*(t_0) \cap M_-(t_0)\big\}-\delta_{t_0}(M)$$
must be an interior point of contact  between $M_+^\ast(t_0)$ and $ M_-(t_0)$.
But then, from the interior tangency principle, we would have that
$$M_+^\ast(t_0) = M_-(t_0).$$
This is  leads to a contradiction because $t_0>0$ and  from (\ref{asinto}) we get that
$M_+^\ast(t_0)$ and $ M_-(t_0)$ are not asymptotic at infinity. Using now the fact that
$$M_+^*(t_0) \cap M_-(t_0)=\delta_{t_0}(M)$$
and that  $\mathcal{K}$ is compact, it follows that there exists a positive number 
$\varepsilon_2 \in (0,\varepsilon_1]$ such that 
$$M_+^*(t)\cap M_-(t)\cap \mathcal{K}= \delta_t(M) \cap \mathcal{K},$$
for all $t \geq t_0-\varepsilon_2$. Thus,
$$\big\{M_+^*(t)\cap M_-(t)\big\}-\delta_t(M) \subset M \cap Z_a,$$
and from  \eqref{eq:infinity} we get that
$$M_+^*(t)\cap M_-(t)=\delta_t(M),$$
for all $t \geq t_0-\varepsilon_2$. Then a continuity argument implies that
$$M_+^\ast(t) \geq M_-(t),$$
for all $t \geq t_0-\varepsilon_2$ and the second and final step of the plan is completed.

Hence, there exists a positive constant $\varepsilon<t_0$ such that $t_0-\varepsilon \in \mathcal{A}$.
This contradicts the assumption that $t_0$ is a minimum. Consequently, $t_0=0$
and the proof is finished.

From Claim 4 we get that $M_+^\ast (0) \geq M_-(0).$ A symmetric argument yields that
$M_-^\ast (0) \geq M_+(0)$. Therefore
$$M^{\ast}_{+}(0)=M_{-}(0)$$
and so $M$ is symmetric with respect to the plane $\Pi$. This completes the proof of the theorem.
\end{proof}

\begin{remark}
Using similar ideas to those applied in Theorem \ref{reflection} we can deduce some results about 
the asymptotic behavior of entire graphical translating solitons in $\real{m+1}$.
\begin{enumerate}[(a)]
\item
Let $M$ an entire graphical translating soliton satisfying the growth condition
$$|u(x)| \leq C \, |x|^\alpha, $$
for all $x \in \R^m-B(0,r)$.
Then $\alpha \geq 2.$
In order to prove this fact, we proceed again by contradiction. Suppose to the contrary
that there exists an entire graphical translator in the euclidean space $\real{m+1}$ satisfying the above growth
condition with $\alpha<2$. 
Let $X$ be the translating paraboloid of Example 2.2 (d)
and translate it vertically until the surfaces $X$ and $M$ do not intersect. Note that this is possible because
we are assuming $\alpha<2$ and $X$ has the asymptotic behavior described in \eqref{eq:g}.
Then, the paraboloid $X$ will move vertically downwards until there is a first point of contact
with the surface $M$. This first contact can not occur at infinity, because we are assuming $\alpha <2$.
Then, the tangency principle implies that $M$ should coincide with a translated copy of $X$. But
this is absurd, because $X$ is asymptotic to the graph over
\begin{equation*}
g(x)=\frac{1}{2}|x|^2-\log|x|+O\big(|x|^{-1}\big)
\end{equation*}
at infinity.
\medskip
\item
Exchanging the role of $M$ and $X$ in the above argument, 
then we can prove that: Suppose that $M$ is an entire graphical translator satisfying the following growth condition
$$|u(x)| \geq C \, |x|^\alpha,$$
for all $x\in \real{m}-B(0,r)$. Then, $\alpha \leq 2.$
\medskip
\item
Using again the tangency maximum principle we can show that there are no complete
and embedded translators that are contained in the solid half-cylinder
$$\quad\quad\mathscr{C}:= \{(x_1, \ldots,x_{m+1}) \in \mathbb{R}^{m+1} \; : \; x_1^2+ \cdots +x_m^2 \leq r^2, \; x_{m+1}>0 \}.$$
\item
The reason that the mean curvature flow of compact hypersurfaces in $\real{m+1}$ form singularities is the following comparison
principle: {\it Let $M_1$ and $M_2$ be two compact Riemannian manifolds of dimension $m$ and $f:M_1\to\real{m+1}$, $g:M_2\to\real{m+1}$
disjoint isometric immersions. Then also the solutions $f_t$ and $g_t$ of the mean curvature flow
remains disjoint}. The compactness assumption cannot be relaxed with that of completeness. Indeed, take as $f:M_1\to\real{3}$
be the unit euclidean sphere and as $g:M_2\to\real{3}$ a complete minimal surface lying inside the unit ball. Such examples were first
constructed by Nadirashvili \cite{nadirashvili}. Obviously $f$ and $g$ do not have intersection points. However, under the mean curvature flow, $f$
shrinks to a point in finite time while $g$ remains stationary.
\end{enumerate}
\end{remark}

\section{The two-dimensional case}
In this section we will investigate $2$-dimensional translating solitons lying in the two dimensional case. We will study the
Gau{\ss} image of such surfaces and will obtain several classification results as well as topological obstructions for the
existence of translating solitons of the mean curvature flow in the euclidean space $\real{3}$.

\subsection{Gau{\ss} image of punctured Riemann surfaces}
It is a well known fact that any smooth oriented compact surface is diffeomorphic
either to a sphere or to a torus with $g$ holes. The Euler characteristic of
such compact surface $\Sigma_g$ depends only on the genus $g$ and is given by the formula
$$\mathcal{X}(\Sigma_g)=2-2g.$$
This classification can be extended also to compact surfaces $\Sigma$ with
boundary. Note that from compactness,  the boundary $\partial \Sigma$ has a finite number $k$ of components.
Moreover, each boundary component of $\Sigma$ is a connected compact $1$-manifold, that is a circle. Now, if we take
$k$ closed discs and glue the boundary of the $i$-th disc to the $i$-th component of the boundary of $\Sigma$, we
obtain a smooth compact surface which we will denote with the letter $\Sigma^*$. It turns out that the topological type of
a compact surface with boundary $\Sigma$ depends only on the number $k$ of its boundary components and the topological
type of the surface $\Sigma^*$. More precisely, $\Sigma$ is diffeomorphic to $\Sigma_{g,k}$,
where
$$\Sigma_{g,k}=\Sigma_g\smi\{p_1,\dots,p_k\}$$
is a punctured Riemann surface given by a closed Riemann surface $\Sigma_g$ of genus $g$ with $k$ points $p_1,\dots,p_k\in\Sigma_g$ removed.

The Euler characteristic of a compact surface with boundary is defined exactly in the same way as in the case of a compact
surface without boundary. It follows that,
$$\mathcal{X}(\Sigma_{g,k})=\mathcal{X}(\Sigma)=\mathcal{X}(\Sigma_g)-k=2-2g-k.$$
The {\it genus  of a compact surface $\Sigma$ with boundary}  is defined to be the genus of the compact surface
$\Sigma_g=\Sigma^*$.

From now we will focus on Riemann surfaces of the form
$$\Sigma_{g,k}:=\Sigma_g\smi\{p_1,\dots,p_k\},$$
where $\Sigma_g$ is a compact Riemann surface of genus $g$ and $p_1,\dots,p_k\in\Sigma_g$ are $k$ distinct punctures. Recall that an {\it end} of the
surface $\Sigma_{g,k}$ is a diffeomorphism $\tilde\varphi_j:D\to\Sigma_g$ of the closed unit disc $D$  to $\Sigma_g$ such that $\tilde\varphi_j(0)=p_j$
for one index $j\in\{1,\dots,k\}$ and
$\tilde\varphi_j(D)\cap\{p_1,\dots,p_k\}=\{p_j\}.$
We will denote by
$\varphi_j:D^{^*}\to\Sigma_{g,k}$ the maps $\varphi_j:=\bigl(\tilde\varphi_j\bigr)_{|D^{^*}},$
where
$D^{^*}:=\{x\in\real{2}:0<|x|\le 1\}$
denotes the punctured unit disc. The surface
$\Sigma_{g,k}$ will be called a {\it planar domain}, if $g=0$ and $k\ge 1$, that is if $\Sigma_{g,k}$ is a punctured sphere.

In the following lemma we present a result which will allow in the rest of the paper a surgery argument for cylindrical ends.

\begin{lemma}[Spherical cap lemma]\label{spherical}
Let $\alpha_{t}:\mathbb{S}^1\to\real{2}$, $t\in[0,1]$ be a smooth family of closed embedded curves and let $\operatorname{d}_0$ denote the diameter of $\alpha_0$.
Define the cylinder $\Theta:\mathbb{S}^1\times[0,1]\to\real{3}$ given by
$$\Theta(s,t):=(\alpha_t(s),t).$$
Then for any $\sigma>0$ there exists an $\varepsilon\in(0,\sigma)$ and
a smooth embedding $C:D\to\real{3}$ of the closed unit disc $D$ such that
\begin{enumerate}[\rm(a)]
\item for all $x\in D$ with $|x|\in[1-\varepsilon,1]$, it holds
$$C(x)=\Theta(x/|x|,1-|x|).$$
\item The height function $u$ given by
$u=\langle C,\operatorname{e}_3\rangle$
satisfies $0\le u\le\sigma$.
Moreover $u(0)=\sigma$ and the critical set of $u$ is $\operatorname{Crit}(u)=\{0\}$.
\smallskip
\item The diameter $\operatorname{d}_C$ of the embedded disc satisfies
$$\operatorname{d}_C\le 2\sigma+\operatorname{d}_0.$$
\item The Gau\ss\ curvature $K$ at $C(0)$ is positive.
\smallskip
\end{enumerate}
\begin{figure}[h]
\includegraphics[scale=.05]{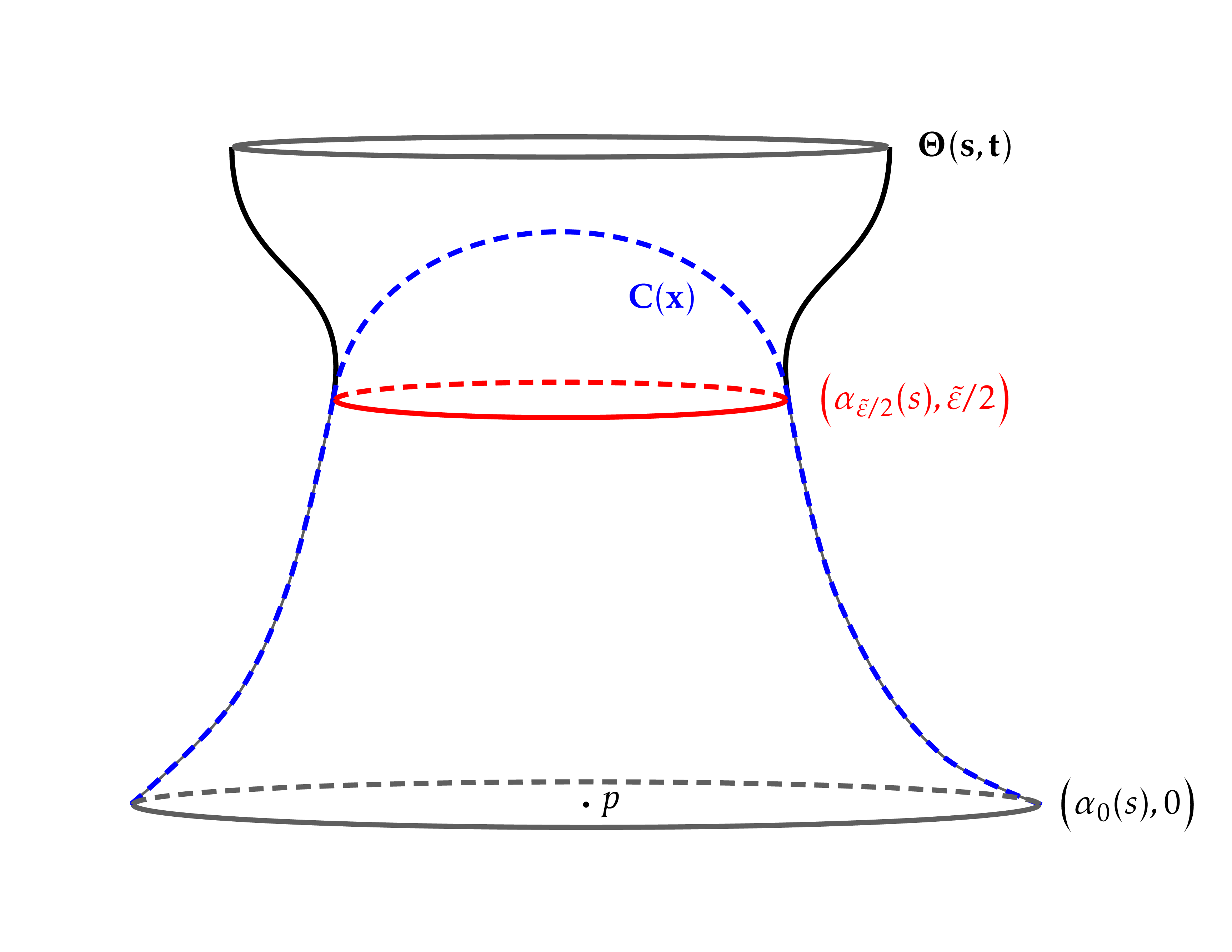}\caption{Adding a spherical cap to a cylinder}\label{gluing2}
\end{figure}
\end{lemma}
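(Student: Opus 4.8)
The plan is to realise the cap as a ``movie'' of horizontal slices, so that away from one point the height function $u$ is simply the slice parameter; then $u$ has no critical points except where the slices degenerate to a point, and there one can prescribe the surface to look like an elliptic paraboloid, which automatically gives $(b)$ and $(d)$. I would first observe that $\Theta$ is automatically a smooth embedding: it is injective because the last coordinate of $\Theta(s,t)$ equals $t$ and each $\alpha_t$ is injective, and an immersion because $\partial_t\Theta=(\partial_t\alpha_t,1)$ and $\partial_s\Theta=(\partial_s\alpha_t,0)$ are everywhere independent; an injective immersion of the compact manifold $\mathbb{S}^1\times[0,1]$ is an embedding. Consequently $\Phi(x):=\Theta\bigl(x/|x|,\,1-|x|\bigr)$ is a smooth embedding of the punctured disc $\{0<|x|\le1\}$ with image in $\{0\le x_3\le1\}$, and its image is nowhere tangent to a horizontal plane (since $\partial_t\Theta$ has vertical component $1$), so $1-|x|$ has nonvanishing differential there. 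Everything then comes down to modifying $\Phi$ near $x=0$.

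I would fix $\delta:=\sigma/4$ and, using that $\alpha_t\to\alpha_0$ smoothly as $t\to0$ (so $\operatorname{diam}\alpha_t\to\operatorname{d}_0$, and the plane regions bounded by the $\alpha_t$ converge to the one bounded by $\alpha_0$ in Hausdorff distance), choose $\varepsilon'\in(0,\sigma/5)$ so small that for all $t\le4\varepsilon'$ the curve $\alpha_t$ has diameter $\le\operatorname{d}_0+\delta$ and bounds a region contained in the $\delta$-neighbourhood of the region bounded by $\alpha_0$. I would then pick any $\varepsilon\in(0,\varepsilon')$ -- this is the number the lemma asks for -- and simply set $C:=\Phi$ on $\{1-3\varepsilon'\le|x|\le1\}$. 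This yields $(a)$ at once because $1-\varepsilon>1-3\varepsilon'$; the heights here lie in $[0,3\varepsilon']\subset[0,\sigma)$, and $u$ has no critical point on this annulus.

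It remains to cap $\{|x|\le1-3\varepsilon'\}$, i.e.\ to construct a smooth family $\beta_c\colon\mathbb{S}^1\to\real{2}$, $c\in[3\varepsilon',\sigma]$, of embedded loops with $\beta_c=\alpha_c$ near $c=3\varepsilon'$, degenerating to a point as $c\uparrow\sigma$, and agreeing near $c=\sigma$ with the level sets of a fixed elliptic paraboloid $x_3=\sigma-\langle M(y-p),y-p\rangle$ ($M$ symmetric positive definite); the cap will be the image of $(s,c)\mapsto(\beta_c(s),c)$, reparametrised near the vertex $(p,\sigma)$. I would build $\{\beta_c\}$ in three overlapping stages: (i) on $[3\varepsilon',3\varepsilon'+\eta]$ (with $\eta\le\varepsilon'$) put $\beta_c:=\alpha_{\nu(c)}$ with $\nu$ smooth, equal to the identity near $3\varepsilon'$ (so near $c=3\varepsilon'$ the cap is literally $\Phi$ and glues $C^\infty$) and constant near $3\varepsilon'+\eta$, so that there $\beta_c$ is a fixed loop $\gamma=\alpha_{t_\ast}$ ($t_\ast\le4\varepsilon'$) with vanishing $c$-jet; (ii) by the smooth Schoenflies theorem pick a diffeomorphism $\Psi$ of $\real{2}$ with $\Psi(\mathbb{S}^1)=\gamma$, affine near the origin, and $\Psi(\overline{D})$ equal to the region bounded by $\gamma$, and on $[3\varepsilon'+\eta,\sigma]$ put $\beta_c:=\Psi\bigl(\rho(c)\,\mathbb{S}^1\bigr)$ with $\rho$ smooth, decreasing, identically $1$ near $3\varepsilon'+\eta$, positive on $[3\varepsilon'+\eta,\sigma)$, and $\rho(c)^2=\sigma-c$ near $c=\sigma$; (iii) since $\Psi$ is affine near $0$, near the vertex the surface is exactly the paraboloid above, with $M=(L^{-1})^{T}L^{-1}$ and $L$ the linear part of $\Psi$, which is smooth and has positive Gauss curvature at $(p,\sigma)$.

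Each $\beta_c$ with $c<\sigma$ is then an embedded loop and distinct $c$ sit at distinct heights, so the cap -- and hence the surface $\Sigma$ obtained by joining it to $\Phi(\{1-3\varepsilon'\le|x|\le1\})$ along the circle at height $3\varepsilon'$, where both equal $\alpha_{3\varepsilon'}$ -- is embedded; it is $C^\infty$ by the flat cut-offs in $\nu$ and $\rho$ and the affineness of $\Psi$, and it is diffeomorphic to a closed disc. Taking a diffeomorphism $C\colon D\to\Sigma$ that equals $x\mapsto\Theta(x/|x|,1-|x|)$ on $\{1-\varepsilon\le|x|\le1\}$ and sends $0$ to $(p,\sigma)$, property $(a)$ holds by construction; $u=x_3$ takes values in $[0,\sigma]$, equals $\sigma$ only at the vertex, and the tangent plane of $\Sigma$ is horizontal only there (the slicing has vertical speed $1$ elsewhere), so $0\le u\le\sigma$, $u(0)=\sigma$ and $\operatorname{Crit}(u)=\{0\}$, giving $(b)$; every loop in the construction lies in the $\delta$-neighbourhood of the region bounded by $\alpha_0$, a set of diameter $\le\operatorname{d}_0+2\delta$, so the horizontal projection of $\Sigma$ has diameter $\le\operatorname{d}_0+2\delta$, and since the vertical extent is $\le\sigma$ we get $\operatorname{d}_C\le(\operatorname{d}_0+2\delta)+\sigma\le\operatorname{d}_0+2\sigma$, which is $(c)$; finally $(d)$ was recorded in stage (iii). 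The only step that is not routine is the interplay in (ii)--(iii): choosing $\Psi$ affine near its centre and $\rho$ with the exact square-root profile at $c=\sigma$ so that the slices fit together into a $C^\infty$ embedded disc with a single nondegenerate interior maximum of $u$ of positive Gauss curvature, while keeping every slice inside the prescribed neighbourhood of $\alpha_0$ so that the diameter bound survives; the rest of the argument is formal.
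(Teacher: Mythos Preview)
Your argument is correct, and the overall architecture---building the cap as a movie of horizontal slices $\beta_c$ with a single degenerate slice at the top---is the same as the paper's. The difference lies in how the slices are produced and what model the top of the cap follows. The paper invokes Grayson's curve shortening theorem: starting from $\gamma_0:=\alpha_{\tilde\varepsilon/2}$, the (rescaled) curve shortening flow gives a smooth isotopy through embedded curves that stay inside a ball of radius $\sigma+\operatorname{d}_0/2$ and become round at the end; a height profile $\phi$ equal to $\sqrt{\sigma^2-(1-t)^2}$ near $t=1$ then makes the top a portion of a genuine round sphere. You instead use the smooth two-dimensional Schoenflies theorem to pick a diffeomorphism $\Psi$ with $\Psi(\mathbb{S}^1)=\gamma$ and $\Psi$ affine near $0$, shrink via $\beta_c=\Psi(\rho(c)\,\mathbb{S}^1)$, and cap with an elliptic paraboloid.

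Your route is arguably more elementary: Grayson's theorem is a substantial PDE result, whereas smooth Schoenflies in the plane is classical, and the diameter control in your version is immediate from $\Psi(\rho(c)\overline D)\subset\Psi(\overline D)=\overline{\operatorname{int}(\gamma)}$. The paper's route has the minor advantage that the diameter bound comes for free from the monotonicity of curve shortening and that the spherical model at the top needs no linear-algebra bookkeeping. Both yield positive Gau\ss\ curvature at the vertex and a single nondegenerate maximum of $u$, so the downstream degree-theory argument is unaffected. The one place your write-up is slightly compressed is the reparametrisation of $C$ near $x=0$ so that the map extends smoothly across the vertex; choosing the radial reparametrisation $\tau$ with $\tau(r)=\sigma-r^{2}$ near $r=0$ (equivalently $\rho(\tau(r))=r$) makes $C(x)=(p+Lx,\sigma-|x|^2)$ there, which settles smoothness, the immersion condition, and $K(C(0))>0$ in one stroke.
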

\begin{proof}
At first let us state some basic facts that will be used in the proof of the lemma.

{\bf Fact 1.} 
Let $p\in\real{2}$ be an arbitrary point in the interior of the curve $\alpha_0$ such that
$$|\alpha_0(s)-p|\le\frac{\operatorname{d}_0}{2}$$
for all $s\in\mathbb{S}^1$.
For $\sigma>0$ we can find an $\tilde\varepsilon\in(0,\min\{1,\sigma\})$ such that for all $t\in[0,\tilde\varepsilon]$
it holds
$$|\alpha_t(s)-p|\le\sigma+\frac{\operatorname{d}_0}{2}.$$
{\bf Fact 2.} According to a well-known theorem of Grayson \cite{grayson}, the
curve shortening flow provides a smooth isotopy of a closed embedded curve $\gamma_0$ to a single point $q\in\real{2}$
in the interior of $\gamma_0$ by smooth embedded curves. Moreover, all evolved
curves satisfy the inequality
$$\max_{\mathbb{S}^1}|\gamma_\tau-q|\le\max_{\mathbb{S}^1}|\gamma_0-q|.$$
Furthermore, the above inequality is still valid, if one blows up the solutions homothetically around $q$ by a time
dependent factor so that the length of the evolving curve is fixed. Under this rescaling the curves become
circular in the limit.

Starting with the curve $\gamma_0:=\alpha_{\tilde\varepsilon/2}$, from Fact 1 and Fact 2 we see that there exists a smooth
isotopy  $\beta_{t}:\mathbb{S}^1\to\real{}$, $t\in[0,1]$, 
such that:
\begin{itemize}
\item
$\beta_t=\alpha_t$ for $t\in[0,\tilde\varepsilon/2]$,
\smallskip
\item $\beta_t(s)=p+(1-t)e^{is}$, for $(s,t)\in\mathbb{S}^1\times[1-\tilde\varepsilon/2,1],$
\smallskip
\item $\beta_t:\mathbb{S}^1\to\real{2}$ is a closed embedded curve for all $t\in[0,1)$ with
$$\max_{\mathbb{S}^1}|\beta_t-p|\le \sigma+\operatorname{d}_0/2$$
for all $t\in[0,1]$.
\end{itemize}
Choose a smooth function $\phi:[0,1]\to\real{}$ with the following properties
\begin{figure}[h]
\includegraphics[scale=.08]{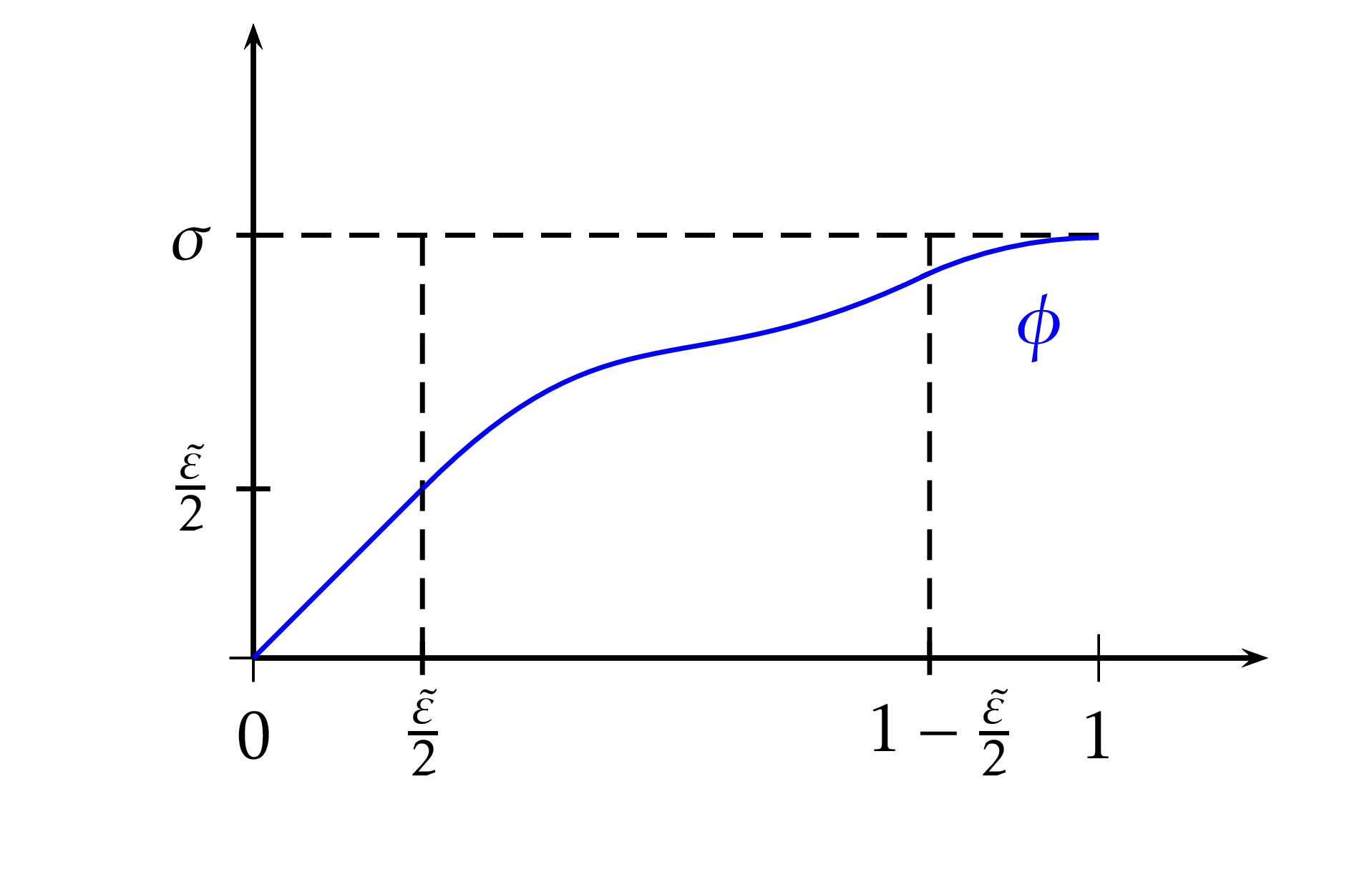}\caption{Smoothing function}\label{smoothing}
\end{figure}
\begin{itemize}
\item $\phi(t)=t$ for $t\in[0,\tilde\varepsilon/2]$,
\smallskip
\item  $\phi(t)=\sqrt{\sigma^2-(1-t)^2}$ for $t\in[1-\tilde\varepsilon/2,1]$,
\smallskip
\item  $\phi'(t)>0$ for $t\in[0,1)$.
\end{itemize}
Let us now define the map $C:D\to\real{2}$ by
$$C(x)=\bigl(\beta_{1-|x|}(x/|x|),\phi(1-|x|)\bigr).$$
If we set $\varepsilon:=\tilde\varepsilon/2$, then for any $x\in D$ with $|x|\in[1-\varepsilon,1]$ we get
\begin{eqnarray*}
C(x)&=&\bigl(\beta_{1-|x|}(x/|x|),\phi(1-|x|)\bigr)\\
&=&\bigl(\alpha_{1-|x|}(x/|x|),1-|x|\bigr)\\
&=&\Theta\bigl(x/|x|,1-|x|\bigr).
\end{eqnarray*}
The last equality proves assertion {\rm (a)} of the Lemma. Since the height function is given by
$$u(x)=\langle C(x),\operatorname{e}_3\rangle=\phi(1-|x|),$$
from the properties of $\phi$ we immediately get {\rm (b)}. Finally, from the construction of the curves $\beta_t$,
we obtain that $C$ is contained in the ball of radius $\sigma+\operatorname{d}_0/2$ with center at $p$. Thus the
diameter $\operatorname{d}_C$ of the cap is bounded by $2\sigma+\operatorname{d}_0$, which
implies assertion {\rm (c)} of the lemma. That the Gau\ss\ curvature at the top is strictly positive, follows from
the fact that $C$ coincides with the portion of a round sphere close to the top. This proves assertion {\rm(d)} and
completes the proof of the lemma.
\end{proof}

In the following theorem we present a general theorem concerning the Gau{\ss} image of a complete surface of the
euclidean space $\real{3}$.

\begin{theorem}\label{gauss map}
Let $f:\Sigma_{g,k}=\Sigma_g\smi\{p_1,\dots,p_k\}\to\real{3}$ be a complete immersion
of a punctured Riemann surface. Suppose $\v\in\mathbb{S}^2$ is a fixed unit vector and let
$u:=\langle f,\v\rangle$ denote the height function of the surface
with respect to the direction $\v$. Suppose that for each puncture $p_j$, $1\le j\le k$, the following two conditions holds:
\begin{enumerate}[\rm(a)]
\item there exists a constant $\varepsilon>0$ such that
$\liminf_{x\to p_j}|\nabla u(x)|\ge\varepsilon.$
\medskip
\item the interval $\bigl(\liminf_{x\to p_j}u(x),\limsup_{x\to p_j}u(x)\bigr)$
does not coincide with the real line.
\end{enumerate}
Then either the image of the Gau\ss\ map $\xi:\Sigma_{g,k}\to\mathbb{S}^2$ contains the pair
$\{\v,-\v\}$ or the genus satisfies $g\le1$. If in addition $f$ is an embedding, then either the image of $\xi$ contains $\{\v,-\v\}$
or $g=0$ and thus $\Sigma_{g,k}$ is a planar domain. Moreover, in all cases we have
$$\limsup_{x\to p_j}|u(x)|=\infty$$
for all punctures $p_j$.
\end{theorem}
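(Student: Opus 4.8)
The plan is to reformulate the Gauss‑map hypothesis in terms of the height function $u:=\langle f,\v\rangle$, to use (a), (b) and completeness to show that each end is a tame half‑cylinder on which $u$ is proper, to cap these ends off with Lemma~\ref{spherical}, and to read off the genus from the degree of the Gauss map of the resulting closed surface. Since $\nabla u=\v^{\top}$ one has $|\nabla u|^{2}=1-\langle\v,\xi\rangle^{2}\le 1$, so $u$ is $1$‑Lipschitz and $\nabla u(x)=0$ precisely when $\xi(x)\in\{\v,-\v\}$. If both $\v$ and $-\v$ lie in $\xi(\Sigma_{g,k})$ there is nothing to prove, so we may assume, after exchanging $\v$ and $-\v$ if needed, that $\v\notin\xi(\Sigma_{g,k})$; then $u$ has no critical point over $\v$. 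By (a) there are $\varepsilon_{0}>0$ and, for each $j$, a punctured coordinate neighbourhood $E_{j}=\varphi_{j}(D^{\ast})$ of $p_{j}$ with $|\nabla u|\ge\varepsilon_{0}$ on $E_{j}$.

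Suppose first that $u$ is bounded on some $E_{j}$. Running the gradient flow of $u$ reparametrised so that $\dot u\equiv 1$ produces curves of speed $1/|\nabla u|\le 1/\varepsilon_{0}$, each of which traverses the bounded range of $u|_{E_{j}}$ within a length independent of its starting point; hence every gradient line through $E_{j}$ exits $E_{j}$ across the compact curve $\partial E_{j}$ within bounded length, so $E_{j}$ lies in a bounded tubular neighbourhood of $\partial E_{j}$, contradicting completeness at $p_{j}$. This proves $\limsup_{x\to p_{j}}|u|=\infty$ --- the last assertion of the theorem, which uses neither (b) nor the hypothesis on $\xi$. Combining this with (b), exactly one of $\liminf_{x\to p_{j}}u$, $\limsup_{x\to p_{j}}u$ is finite; say $\liminf_{x\to p_{j}}u=c_{j}\in\mathbb{R}$. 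A second use of the flow argument excludes even this: if $x_{n}\to p_{j}$ with $u(x_{n})\to c_{j}$, the backward gradient line from $x_{n}$ must leave $E_{j}$ within length $\le\varepsilon_{0}^{-1}\bigl(u(x_{n})-c_{j}\bigr)\to 0$, whereas $\operatorname{dist}(x_{n},\partial E_{j})\to\infty$ by completeness (Arzel\`a--Ascoli and Hopf--Rinow). Hence at every puncture $u\to+\infty$ or $u\to-\infty$; call $p_{j}$ a bottom or a top puncture accordingly. In particular $u|_{E_{j}}$ is a proper submersion onto a half‑line, so by Ehresmann's theorem $E_{j}\cong\mathbb{S}^{1}\times[a_{j},\infty)$ with $u$ the second coordinate, and the circle $\gamma_{j}:=u^{-1}(a_{j})\cap E_{j}$ generates $\pi_{1}(E_{j})$, hence is non‑contractible in $\Sigma_{g,k}$.

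Now delete the open half‑cylinders $\{u>a_{j}\}\cap E_{j}$ and glue a spherical cap onto each $\gamma_{j}$. As the last Euclidean coordinate of $f$ equals $u$, a collar of $\gamma_{j}$ has the form $(s,t)\mapsto(\alpha_{t}(s),a_{j}+t)$, so when $f$ is an embedding the $\alpha_{t}$ are embedded closed curves and Lemma~\ref{spherical} applies directly; by Lemma~\ref{spherical}(c) the caps may be taken disjoint from one another and, if $|a_{j}|$ is large enough, from $f(\Sigma_{g,k})$. For a bottom puncture choose the cap so that its height is strictly decreasing in $|x|$ from the summit value (and take the mirror image for a top puncture); then the cap's unit normal equals $\pm\v$ only at its summit, where it equals $\v$ (resp.\ $-\v$) and, by Lemma~\ref{spherical}(d), the Gauss curvature is positive. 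One obtains a closed surface $\widetilde\Sigma$, immersed in $\mathbb{R}^{3}$ (embedded if $f$ is), diffeomorphic to $\Sigma_{g}$, together with a smooth extension of $u$ whose only new critical points are the $k$ non‑degenerate summits.

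Let $\widetilde\xi:\widetilde\Sigma\to\mathbb{S}^{2}$ be the Gauss map of $\widetilde\Sigma$, so that $\deg\widetilde\xi=\tfrac12\mathcal{X}(\widetilde\Sigma)=1-g$. Using $\v$ as a regular value: no point of $f(\Sigma_{g,k})$ lies over $\v$, a top‑puncture cap never has normal $\v$, and each bottom‑puncture cap contributes exactly its summit with local degree $+1$; therefore $1-g=\deg\widetilde\xi=\#\{\text{bottom punctures}\}\ge 0$, i.e.\ $g\le 1$. If in addition $f$ is an embedding, $\widetilde\Sigma$ is a closed embedded surface, so its outward unit normal $\nu$ is surjective onto $\mathbb{S}^{2}$ and $\deg\nu=1-g$; since $\nu=\pm\xi$ on $f(\Sigma_{g,k})$ and $\xi$ omits $\v$, one of the values $\pm\v$, say $w_{0}$, is not attained by $\nu$ on $f(\Sigma_{g,k})$, and by surjectivity $\nu^{-1}(w_{0})$ is then a nonempty set of cap summits, each of local degree $+1$, so $1-g=\deg\nu=\#\nu^{-1}(w_{0})\ge 1$; hence $g=0$ and $\Sigma_{g,k}$ is a planar domain. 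The heart of the proof is the construction of the second and third paragraphs --- extracting properness of $u$ and the half‑cylinder structure of the ends from (a), (b) and completeness, and carrying out the controlled gluing; it is precisely the failure of this control for a merely immersed $f$ (Lemma~\ref{spherical} requires embedded level curves, and the Gauss map of an immersed closed surface need not be surjective) that prevents ruling out $g=1$ in general.
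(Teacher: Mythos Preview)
Your argument follows the paper's strategy closely: analyse the ends via the gradient flow of $u$, truncate and glue caps from Lemma~\ref{spherical}, then read off the genus bound from the degree of the Gauss map of the resulting closed surface. The main variation is that you push the end analysis further, proving $u\to\pm\infty$ at each puncture and then invoking Ehresmann to obtain a half-cylinder structure directly, whereas the paper stops at $\limsup|u|=\infty$ and instead locates one closed level curve near each puncture by a four-case analysis (ruling out (C2)--(C4)), then appeals to Morse theory to see that nearby level curves are isotopic. Both routes feed into the same capping construction and degree computation; for the embedded case you use surjectivity of the outward normal where the paper uses a supporting-plane argument, which is the same idea.

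Two small imprecisions are worth noting. First, in your second flow argument the stated bound $\varepsilon_0^{-1}(u(x_n)-c_j)\to 0$ tacitly assumes $u\ge c_j$ near $p_j$, which $\liminf u=c_j$ does not give; the correct bound is $\varepsilon_0^{-1}(u(x_n)-\inf_{E_j}u)$, which is merely bounded---but boundedness already contradicts $\operatorname{dist}(x_n,\partial E_j)\to\infty$, so the conclusion stands. Second, your claim that the cap summit has normal exactly $\v$ (resp.\ $-\v$) according to whether the puncture is ``bottom'' or ``top'' presupposes an orientation of the cap that need not match the one inherited from $f$; all one can say is that each summit has $\tilde\xi\in\{\v,-\v\}$ with $K>0$, so $\tilde\xi^{-1}(\v)$ is some subset of the summits and $1-g=\#\tilde\xi^{-1}(\v)\ge 0$. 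The paper phrases it this way and avoids the issue. Finally, as you yourself note at the end, Lemma~\ref{spherical} is stated for embedded planar curves, so the capping step for merely immersed $f$ is not fully justified by that lemma; the paper glosses over this point in the same way.
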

\begin{proof}
The idea of the proof is to glue spherical caps along each end and to apply degree theory to obtain informations about the image
of the Gau{\ss} map. We divide the proof into several steps:

{\bf Step 1.} To do the surgery, at first we need some information about the nature and the behavior of $u$ along the ends.

{\bf Claim:} {\it For each puncture $p\in\{p_1,\dots,p_k\}$
of $\Sigma_{g,k}$ there exists $\varepsilon>0$ and
a closed embedded curve $\alpha\subset\Sigma_{g,k}$ such that:
\begin{enumerate}[\rm(i)]
\item
The coordinate function $u$ is constant along the curve $\alpha$,
\medskip
\item
The closure of one of the connected components $V$ of $\Sigma_{g,k}\smi\, \alpha$ within $\Sigma_g$
is diffeomorphic to the closed unit disc $D\subset\real{2}$ such that
$\overline V\cap\{p_1,\dots,p_k\}=\{p\}$
and $\inf_V|\nabla u|\ge\varepsilon$.
\end{enumerate}}
Indeed,  from the assumption {\rm (a)} of the theorem it is clear that {\rm(ii)} can be fulfilled. That is,
there exists an  open set $U\in\Sigma_{g,k}$ such that:
\begin{itemize}
\item  $\overline U$ is diffeomorphic to $D$,
\smallskip
\item $\overline U\cap\{p_1,\dots,p_k\}=\{p\}$,
\smallskip
\item $\inf_{U}|\nabla u|\ge\varepsilon$ for a positive constant $\varepsilon>0$.
\end{itemize}
Hence it suffices to prove that $U$ contains a closed level set curve 
of $u$ which encloses $p$ in its interior, i.e. a curve like $\alpha_1$ in Figure \ref{levelsets}.
Let us denote by $\gamma_0$ the boundary of $U$, that is $\gamma_0:=\partial U$, and set
$$u_0^+:=\max_{x\in\gamma_0}u(x),\quad u_0^-:=\min_{x\in\gamma_0}u(x),\quad \delta_0:=u_0^+-u_0^-.$$
Let $x$ be a point in $U$ and
$$\operatorname{d}_x:=\operatorname{dist}(x,\gamma_0)$$
its distance from the boundary curve $\gamma_0$.
Denote by $\beta_x$ the flow line of $\nabla u/|\nabla u|$ with $\beta_x(0)=x$. Since the
vector field $\nabla u/|\nabla u|$ is well defined in $U$ and because $\operatorname{dist}(x,\gamma_0)=\operatorname{d}_x$, the
flow line can at least be parametrized over the interval $[0,\operatorname{d}_x]$ until it reaches the boundary curve $\gamma_0$ (if it reaches $\gamma_0$ at all).
We compute
\begin{eqnarray*}
\left|u(\beta_x(\operatorname{d}_x))-u(x)\right|
&=&\left|\int_0^{\operatorname{d}_x}\dt(u(\beta_x(t))dt\right|\\
&=&\left|\int_0^{\operatorname{d}_x}\langle\nabla u\circ\beta_x(t),\dt\beta_x(t)\rangle dt\right|\\
&=&\left|\int_0^{\operatorname{d}_x}|\nabla u\circ\beta_x(t)|dt\right|\\
&=&\int_0^{\operatorname{d}_x}|\nabla u\circ\beta_x(t)|dt\\
&\ge&\varepsilon\operatorname{d}_x.
\end{eqnarray*}
Since the surface $\Sigma_{g,k}$ is complete, the Hopf-Rinow Theorem implies that
there exists a point $q\in U$ such that $\operatorname{d}_x$ is arbitrarily large. So,
the last inequality shows
$$\limsup_{x\to p}|u(x)|=\infty.$$
In particular, if we choose $x\in U$
such that $\operatorname{d}_x>\delta_0/\varepsilon$,
then we see that there must exist another point $q\in U$ with
$u(q)\not\in[u_0^-,u_0^+]$. Let $\alpha$ be the connected component of the level set $u^{-1}(u(q))\cap (U\cup\gamma_0)$.
Since $\nabla u\neq 0$ on $U\cup\gamma_0$, $\alpha$ must be an embedded regular curve. Thus, one of the following cases
holds (cf. Figure \ref{levelsets}).
\begin{figure}[h]
\includegraphics[scale=.1]{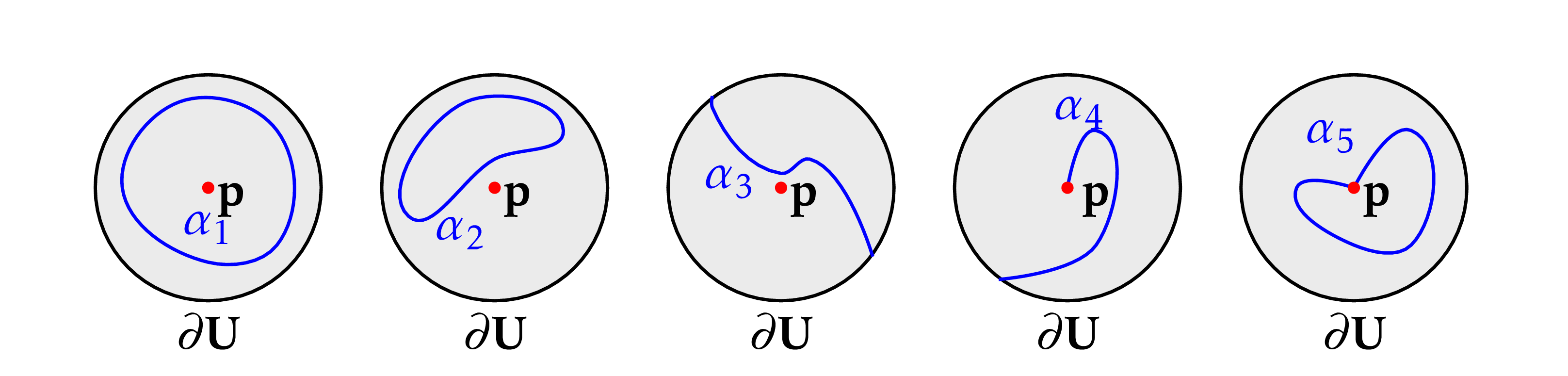}\caption{Possible types of level sets.}\label{levelsets}
\end{figure}
\begin{enumerate}
\item[(C1)] $\alpha\subset U$ and $p$ lies in the interior of $\alpha$.
\item[(C2)] $\alpha\subset U$ and $p$ lies in the exterior of $\alpha$.
\item[(C3)] $\alpha\cap\gamma_0\neq\emptyset$.
\item[(C4)] Both ends of $\alpha$ connect to $p$.
\end{enumerate}
The case (C2) cannot occur, because then the function $u$ would admit a local extremum in the interior of
$\alpha$ which in particular implies $\nabla u=0$ there. Case {\rm(C3)} is impossible since
$$u_{|\alpha}\equiv u(q)\not\in[u_0^-,u_0^+].$$
So we only need to exclude the last case {\rm(C4)}.
Choose a closed curve $\gamma_1\subset U$ such that $q\in\gamma_1$ and $\alpha\smi\{q\}\in\operatorname{int}(\gamma_1)$ (cf. Figure \ref{puncture}). Similar as above define
$$u_1^+:=\max_{x\in\gamma_1}u(x),\quad u_1^-:=\min_{x\in\gamma_1}u(x),\quad \delta_1:=u_1^+-u_1^-.$$ Applying once more the Hopf-Rinow Theorem, we can find a point
$\tilde q\in\alpha$ with $\operatorname{dist}(\tilde q,\gamma_1)>\delta_1/\epsilon$. Computing
as above we obtain for all points $q'$ on the flow line $\beta_{\tilde q}$ of $\nabla u/|\nabla u|$
the estimate
$$|u(q')-u(q)|=|u(q')-u(\tilde q)|\ge\epsilon\operatorname{dist}(\tilde q,\gamma_1)>\delta_1.$$
From this estimate we deduce that the flow line $\beta_{\tilde q}$ can never intersect the curve $\gamma_1$. Thus
$\beta_{\tilde q}\subset\operatorname{int}(\gamma_1)$ and since $|\nabla u|\ge \varepsilon$ 
this implies 
$$\liminf_{\beta_{\tilde q}}u=-\infty\quad\text{and}\quad\limsup_{\beta_{\tilde q}}u=+\infty$$
which by assumption {\rm (b)} is impossible.
\begin{figure}[h]
\includegraphics[scale=.06]{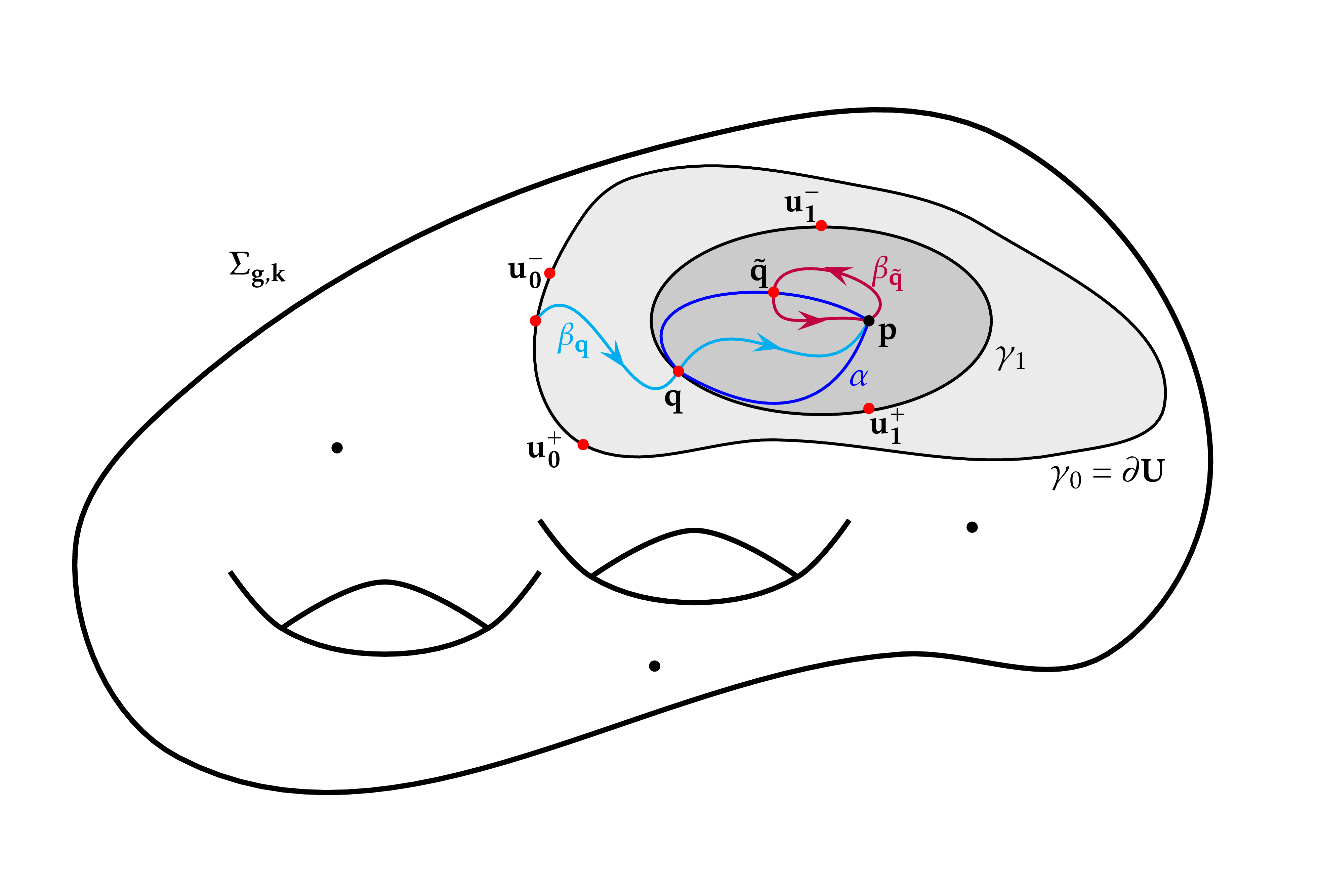}\caption{The level set $\alpha$ cannot tend to the puncture $p$, if it contains a point $q$ that is too far away from $\gamma_0$.}\label{puncture}
\end{figure}
Hence $\alpha$ is a simple closed level curve enclosing $p$. Now let 
$V:=\operatorname{int}(\gamma_1)$, this proves the claim.

{\bf Step 2.} We will use the above mentioned results to proceed with the gluing of spherical caps along the ends
of our surface. For each puncture $p_j\in\{p_1,\dots,p_k\}$ choose a closed level curve $\alpha_j$ and an
open set $V_j$ as in Step 1. From Morse Theory (see for example \cite{m}) it follows that all level curves $\tilde\alpha_j$
contained in $V_j$ are isotopic. Since for each puncture exactly one of the conditions
$$\limsup_{x\to p_j}u(x)=\infty\quad\text{ or}\quad \liminf_{x\to p_j}u(x)=-\infty$$
holds, we can without loss of generality assume that 
$\lim_{x\to p_j}u(x)=\infty$
for any index $j\in\{1,\dots,l\}$ and
$\liminf_{x\to p_j}u(x)=-\infty$
for any index $j\in\{l+1,\dots,k\}$, where $l\in\{0,\dots,k\}$, and that
$$\bigcup_{1\le j\le k}\alpha_j=|u|^{-1}(L)$$
for a large positive constant $L$. Let $E\subset\real{3}$ be the $2$-dimensional subspace perpendicular
to $\v$ and denote by
$$P_{\pm L}:=E\pm L\v$$
the affine planes parallel to $E$ at distance $L$. Then
\begin{equation}\label{sort}
\bigcup_{1\le j\le l}\alpha_j\subset P_{L}\quad\text{and}\quad\bigcup_{l+1\le j\le k}\alpha_j\subset P_{-L}.
\end{equation}
For each curve $\alpha_1,\dots,\alpha_l$ let us define
$$a_j:=\max_{p\in\alpha_j}|p-L\v|$$
and for the curves $\alpha_{l+1},\dots,\alpha_k$ we set
$$b_j:=\max_{p\in\alpha_j}|p+L\v|.$$
It is then even possible to sort the curves in such a way that $\alpha_l$ is the outermost and $\alpha_1$ the innermost curve in $P_L$, measured from the point $L\v$, i.e. so that
$a_1\le \dots \le a_l$.
In the same way one can sort
the curves $\alpha_{l+1}\le\dots\le\alpha_k$, so that $b_{l+1}\le\dots\le b_k$.
If $f$ is an embedding, the curves $\alpha_j$ do not intersect each other.
This implies that the interior of the curve $\alpha_j$
cannot contain any of the curves $\alpha_{j'}$ for $1\le j<j'\le l$ and for $l+1\le j<j'\le k$.
Using Morse Theory again we can replace $(\alpha_j,V_j)$ by the level curve $(\tilde\alpha_j,\tilde V_j)$
for the value $L+j-1$,
$j=1,\dots,l$ and by the value $-L-j+(l+1)$ for $l+1\le j\le k$.
By Lemma \ref{spherical} we can now do the following surgery. 
\begin{figure}[h]\includegraphics[scale=.05]{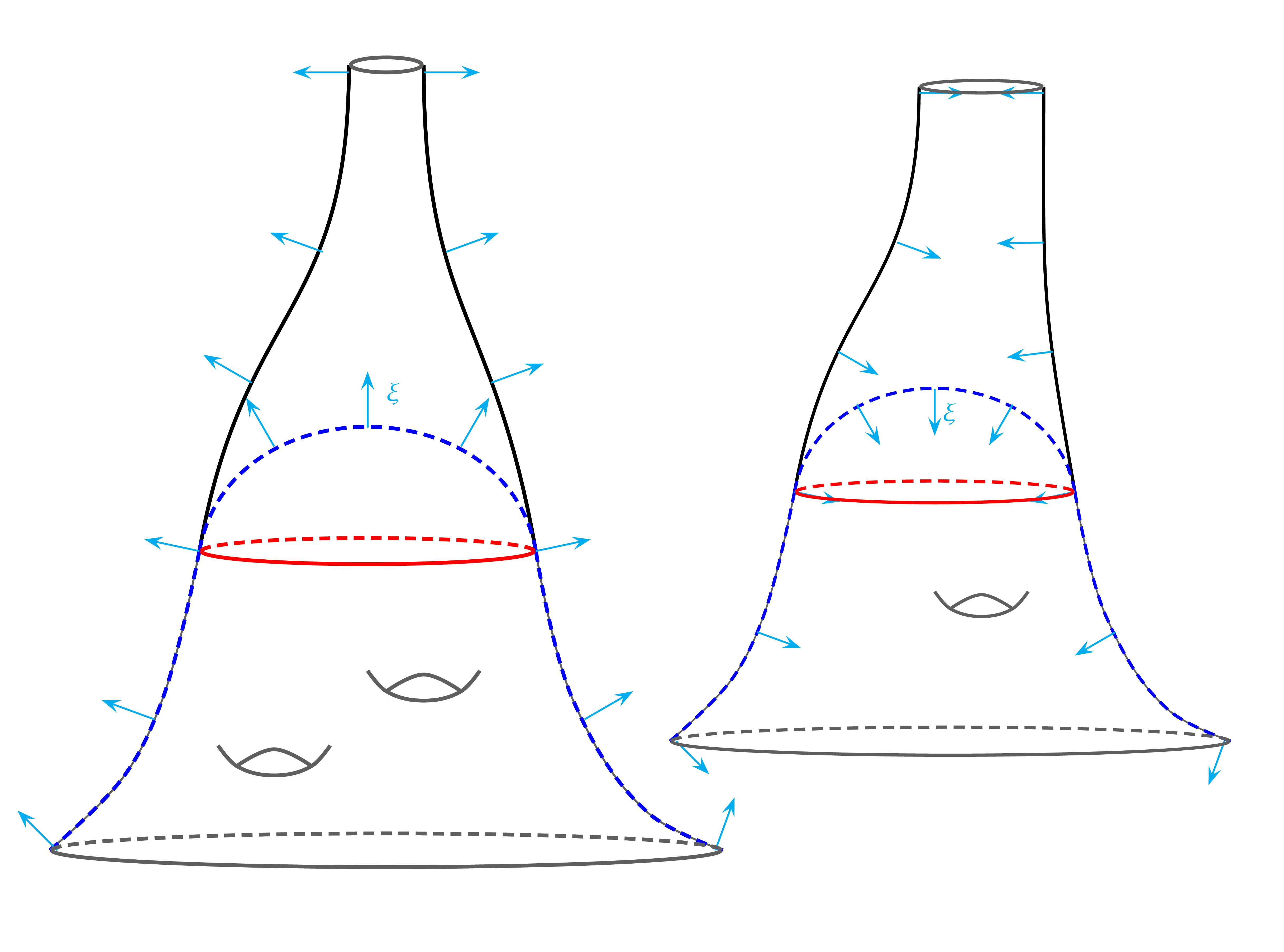}\caption{Gluing spherical caps to truncated ends.}\label{gluing}
\end{figure}
First we remove all ends, that is consider the set
$$M:=f\left(\Sigma_{g,k}\smi\bigcup_{1\le j\le k}\tilde V_j\right).$$
To each end in the upper half space, i.e. for $1\le j\le l$, we smoothly glue an upper spherical cap
to $M$ as described in Lemma \ref{spherical} with some $\sigma>0$. To each end in the lower half space,
i.e. for $l+1\le j\le k$, we smoothly glue a lower spherical cap with the same $\sigma$. Choosing
$\sigma$ sufficiently small we can guarantee that the result is still embedded, if that was the
case for $f$. We obtain a smooth immersion (resp. an embedding if $f$ is one) 
$F:\Sigma_g\to\real{3}$ such that
$$F_{|\left(\Sigma_{g,k}-\bigcup_{1\le j\le k}\tilde V_j\right)}=f.$$
{\bf Step 3}. We use now degree theory to investigate the Gau{\ss} image. Suppose the Gau\ss\ map $\xi$ of $f:\Sigma_{g,k}\to\real{3}$ does not contain $\v$ or
$-\v$. Without loss of generality we assume that $\v$ is not attained, since the case that the Gau\ss\ map does not attain $-\v$ can be treated in the same way.

Denote by $\tilde\xi$ the Gau\ss\ map of $F$. Since $\xi^{-1}(\v)=\emptyset$ we see that the
new Gau\ss\ map $\tilde\xi$ can attain the value $\v$ only at the poles of the added caps.
By Lemma \ref{spherical} the Gau\ss\ curvature at the poles is strictly positive so that
$\v$ is a regular value of $\tilde\xi$. \\[12pt]
It is well known (see for example \cite{hopf}) that the degree of the Gau{\ss} map $\tilde\xi$ of the compact surface $\Sigma_g$ is equal to
$$\operatorname{deg}\tilde\xi=1-g.$$
On the other hand
$$\operatorname{deg}\tilde\xi
=\sum_{{\p}\in\tilde\xi^{-1}({q})}\operatorname{sign}\operatorname{det}\operatorname{d}\hspace{-2pt}{\tilde\xi}({\p})
=\sum_{{\p}\in\tilde\xi^{-1}({q})}\operatorname{sign}K({\p}),$$
where ${q}$ is an arbitrary regular value of $\tilde\xi$ (see for example \cite{milnor}). Take as ${ q}$ the value ${\v}$
of $\mathbb{S}^2$. Note that $\tilde\xi^{-1}({\v})$ consists of points at the poles of the spherical
caps that we added. Thus
\begin{eqnarray*}\label{south}
1-g&=&\{\text{number of poles where }\tilde\xi=\v\}\\
&\ge& 0.
\end{eqnarray*}
Thus, the first assertion of the theorem is true. Let us now investigate the case where $f$ is an embedding. 
Since in case of embedded surfaces the unit normal vector field $\tilde\xi$
can be chosen to be outward pointing, we can move a plane perpendicular to $\v$ from infinity by parallel transport until it touches the
surface from above. So in this case there exists at least (and at most) one pole, where $\tilde\xi=\v$.
Consequently,
\begin{eqnarray*}\label{south2}
1-g&=&\{\text{number of poles where }\tilde\xi=\v\} \\
&\ge& 1,
\end{eqnarray*}
which yields $g=0$. This completes the proof of the theorem.
\end{proof}

\subsection{Translating surfaces}
Suppose $f:M^2\to\real{3}$ is an immersion of an oriented manifold $M$ as a translating surface in direction of $\v$, where $\v$
shall denote the north pole of $\mathbb{S}^2$. Since the Gau\ss\ map $\xi:M^2\to\mathbb{S}^2$ satisfies
$$H=-\langle\xi,\v\rangle,$$
we have $H\in[-1,1]$  and we immediately observe that:
\begin{enumerate}[\rm(a)]
\item The mean curvature $H$ is strictly positive if and only if around each point of $M^2$ the hypersurface is a graph over
a portion of the plane with normal $\v$.
\smallskip
\item The mean curvature $H$ is strictly bigger than $-1$ if and only if the vector $\v$ is
not contained in the Gau{\ss} image of $f$.
\end{enumerate}
Note, that the grim hyperplane and the translating paraboloid are both mean convex, that is they satisfy the above condition (a). Moreover,
the translating catenoid satisfies condition (b), that is its Gau\ss\ map omits the
north pole. In the next theorem we prove that a translating soliton for which the Gau\ss\ map omits the north pole must be even strictly mean
convex, if it is already mean convex outside some compact subset. Before, stating and proving this result, let us give the following useful lemma.

\begin{lemma}\label{meanconvex}
Let $\mathscr{C}$ denote the class of translators
$f:M^2\to\real{3}$ for which exists a constant $\varepsilon>0$ such that
its Gau{\ss} curvature satisfies $K\ge\varepsilon$ on the set
$$\{x\in M^2:H(x)=-1\},$$
where $\{x\in M^2:H(x)=-1\}=\emptyset$ is allowed. If $f\in\mathscr{C}$
satisfies $H\ge 0$ outside a compact subset $C$ of $M^2$, then either $H\equiv 0$
and $M=f(M^2)$ is isometric to a plane or $H>0$ on all of $M^2$. 
\end{lemma}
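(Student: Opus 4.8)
\emph{Strategy.} Everything is driven by the elliptic equation for the scalar mean curvature. Writing $\Delta_f:=\Delta+\langle\nabla u,\nabla\,\cdot\,\rangle$ for the drift Laplacian associated with the height function $u$, Lemma~\ref{lemm rel}(f) reads $\Delta_f H+|A|^2H=0$. Since $|A|^2\ge 0$, the function $H$ is a $\Delta_f$-supersolution on the open set $\{H\ge 0\}$ and a $\Delta_f$-subsolution on $\{H\le 0\}$, and translators are real analytic (see the proof of Theorem~\ref{tangency}), so $H$ cannot vanish on a nonempty open subset of the connected surface $M^2$ without vanishing identically. My plan is to reduce the statement to the case $H\ge 0$ on all of $M^2$. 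In that case $\Delta_f H=-|A|^2H\le 0$ with $H\ge 0$, so by the strong maximum principle either $H>0$ everywhere or $H$ vanishes at an interior point and hence $H\equiv 0$; and if $H\equiv 0$ then $|\nabla u|^2=1-H^2\equiv 1$ and $\nabla^2u=HA\equiv 0$ by Lemma~\ref{lemm rel}(b),(c), so $\nabla u$ is a parallel unit vector field, $M$ is flat, and, being also minimal, totally geodesic --- i.e. $M=f(M^2)$ is isometric to a plane (necessarily tangent to $\v$).

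\emph{The exceptional locus.} Before the reduction I would record where the hypothesis $f\in\mathscr{C}$ is used. At a point $x$ with $H(x)=-1$ one has $\nabla u(x)=0$ by Lemma~\ref{lemm rel}(b) and $\nabla^2u(x)=-A(x)$ by Lemma~\ref{lemm rel}(c); since $\operatorname{tr}A(x)=-1<0$ while $\det A(x)=K(x)\ge\varepsilon>0$, the endomorphism $A(x)$ is negative definite, so $\nabla^2u(x)$ is positive definite and $u$ has a nondegenerate critical point at $x$. Hence $\{H=-1\}\subseteq\operatorname{Crit}(u)$ is discrete, and being contained in the relatively compact set $\{H<0\}\subseteq C$ it is in fact finite.

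\emph{The main step and the obstacle.} It remains to prove that $H$ is nowhere negative (granting $H\not\equiv 0$). The set $\Omega:=\{H<0\}$ is contained in $C$, hence relatively compact, with $\partial\Omega\subseteq\{H=0\}$, and $\Delta_f H=-|A|^2H\ge 0$ on $\Omega$. This is exactly where the real difficulty sits: the zeroth order coefficient in $\Delta_f H+|A|^2H=0$ has the \emph{wrong} sign, so a negative interior minimum of $H$ on $\overline\Omega$ cannot be excluded from this scalar equation alone (already $\phi''+\phi=0$ admits sign-changing solutions). To close this gap I would argue by comparison, through the tangency principle (Theorem~\ref{tangency}): at a point $x_0$ where $H$ attains its negative minimum on $\overline\Omega$ one controls the unit normal and, via $\nabla H(x_0)=0$ and Lemma~\ref{lemm rel}, the second fundamental form (with the finitely many points of $\{H=-1\}$ handled by the nondegeneracy above), and placing a suitably chosen model translator in first contact with $M$ at $x_0$ should force $M$ either to coincide with a model surface that is incompatible with ``$H\ge 0$ outside the compact set $C$'' --- for instance a translating catenoid, whose ends fail to be mean convex --- or to realize a contact that is geometrically impossible. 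Either way $\Omega=\emptyset$, which brings us back to the case settled above. Everything apart from this comparison argument is a routine combination of the strong maximum principle with the real analyticity of translators.
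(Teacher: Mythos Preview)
Your reduction to the case $H\ge 0$ and the subsequent application of the strong maximum principle to $\Delta_fH=-|A|^2H$ are fine, and your diagnosis of the obstacle --- the wrong sign of the zeroth-order term in $\Delta_fH+|A|^2H=0$ --- is exactly right. The gap is in what you call ``the main step'': the proposed comparison via the tangency principle is not actually an argument. The tangency principle (Theorem~\ref{tangency}) says that two embedded translators touching at a point with common tangent plane must coincide; to invoke it you must \emph{produce} such a configuration. You have not explained which model translator to take, how to position it so that it touches $M$ precisely at the putative minimum point $x_0$ from one side, or why such a first contact exists at all (note that no embeddedness of $f$ is assumed in this lemma). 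Saying that the resulting coincidence would be ``incompatible with $H\ge 0$ outside $C$'' presupposes a global identification that you have not established. As written, this step is a wish rather than a proof.

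The paper takes a completely different, purely analytic route that directly repairs the sign problem you identified. One considers, for a parameter $\lambda\in(0,1)$ to be chosen, the weighted function $g:=He^{\lambda u}$. If $\mu:=\inf_M g<0$, compactness of $\{H<0\}\subset C$ forces the infimum to be attained at some $x_0$ with $H(x_0)<0$. At $x_0$ one has $\nabla H=-\lambda H\nabla u$ and, using Lemma~\ref{lemm rel}(d),(f) and $|A|^2=H^2-2K$,
\[
0\le e^{-\lambda u}\Delta g=(1-\lambda)H\bigl(\lambda|\nabla u|^2-H^2\bigr)+2HK .
\]
Combining $\nabla H=-\lambda H\nabla u$ with the identity $K|\nabla u|^2=-|\nabla H|^2-H\langle\nabla H,\nabla u\rangle$ of Lemma~\ref{lemm rel}(g), one finds $K=\lambda(1-\lambda)H^2$ whenever $|\nabla u|(x_0)\neq 0$, and substituting back gives $0\le(1-\lambda)H\{(2\lambda-1)H^2+\lambda|\nabla u|^2\}<0$ for any $\lambda\in[1/2,1)$, a contradiction. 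If instead $|\nabla u|(x_0)=0$ then $H(x_0)=-1$, and the inequality above reads $\lambda-1+2K(x_0)\le 0$; the hypothesis $K\ge\varepsilon$ on $\{H=-1\}$ then gives a contradiction for $\lambda\in(1-2\varepsilon,1)$. A single $\lambda\in(\max\{1/2,1-2\varepsilon\},1)$ handles both cases, so $H\ge 0$ everywhere and your endgame applies. This is the missing idea: the exponential weight $e^{\lambda u}$ effectively shifts the zeroth-order coefficient just enough to restore the maximum principle, and the class $\mathscr{C}$ hypothesis enters precisely to close the degenerate case $|\nabla u|(x_0)=0$.
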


\begin{proof}
Let $\lambda>0$ be a constant to be chosen later and define the function $g:M^2\to\real{}$ given by
$g:=He^{\lambda u}$. A straightforward computation yields
\begin{eqnarray*}
\nabla g&=&e^{\lambda u}(\nabla H+\lambda H\nabla u),\\
\Delta g&=&e^{\lambda u}\big\{\Delta H+2\lambda\langle\nabla H,\nabla u\rangle+\lambda^2H|\nabla u|^2+\lambda H\Delta u\big\}.
\end{eqnarray*}
Now let $\mu:=\inf_{M^2}g(x)$. We claim that $\mu\ge 0$. Suppose to the contrary that $\mu$ is negative. Since $C$ is compact and $g$ is non-negative
on $M^2\smi\, C$, the infimum is attained at some point $x_0\in C$. Thus, $H(x_0)<0$. Moreover at the point $x_0$ we have $\nabla g=0$ and $\Delta g\ge 0$,
that is
$$\nabla H=-\lambda H\nabla u$$
and
\begin{eqnarray}\label{delta}
0&\le&\Delta H+2\lambda\langle\nabla H,\nabla u\rangle+\lambda^2H|\nabla u|^2+\lambda H\Delta u\nonumber\\
&=&-H^3+2HK-\lambda(2\lambda-1)H|\nabla u|^2+\lambda^2H|\nabla u|^2+\lambda H^3\nonumber\\
&=&-(1-\lambda)H^3+\lambda(1-\lambda)H|\nabla u|^2+2HK\nonumber\\
&=&(1-\lambda)H\big\{\lambda|\nabla u|^2-H^2\big\}+2HK.
\end{eqnarray}
On the other hand, since always
$$K|\nabla u|^2=-|\nabla H|^2-H\langle \nabla H,\nabla u\rangle,$$
we get that at the point $x_0$ it holds
$$K|\nabla u|^2=-\lambda^2H^2|\nabla u|^2+\lambda H^2|\nabla u|^2=\lambda(1-\lambda)H^2|\nabla u|^2.$$
We distinguish now two cases:

{\bf Case 1:} Suppose that $|\nabla u|(x_0)>0$.
Choose the parameter $\lambda$ to take values in the interval $[1/2,1)$. Then the Gau{\ss} curvature $K$ must be positive
at $x_0$ and in particular
$$K=\lambda(1-\lambda)H^2.$$
Substituting the above expression of $K$ into the inequality (\ref{delta}) we deduce that
\begin{eqnarray*}
0&\le&(1-\lambda)H(-H^2+\lambda|\nabla u|^2)+2HK\\
&=&(1-\lambda)H\big\{(2\lambda-1)H^2+\lambda|\nabla u|^2\big\}\\
&<&0
\end{eqnarray*}
which leads to a contradiction. Consequently, $\mu\ge 0$ and thus $H\ge 0$ on all of $M$. But then the strong elliptic maximum principle
applied to Lemma \ref{lemm rel} (f) gives either $H\equiv 0$ or $H>0$ on  $M$.

{\bf Case 2:} Suppose now that $|\nabla u|(x_0)=0$. Then, at the point $x_0$ we have
$H=-1$. From the inequality (\ref{delta}) we obtain that at the point $x_0$, the following inequality
is valid. 
$$\lambda-1+2K\le 0.$$
Using the fact that $K\ge\varepsilon$ at $x_0$ we deduce that
\begin{equation}\label{delta2}
\lambda-1+2\varepsilon\le 0.
\end{equation}
On the other hand, notice that at $x_0$ we have
$$\varepsilon\le K\le {H^2}/{4}={1}/{4}.$$
Therefore, for $\lambda\in(1-2\varepsilon, 1)$ equation (\ref{delta2}) leads to a contradiction. Thus $H\ge 0$
everywhere and again by the strong maximum principle it follows that either $H\equiv 0$ or $H>0$ everywhere.

It is obvious that it is possible to choose a parameter $\lambda$ which works simultaneously for both cases.
In the case where $M$ is complete and properly embedded it turns out that it must be represented as a
graph over a region of the plane $E$ that is perpendicular to the translating direction $\v$. Consequently, in
this case the genus of $M$ is zero.
\end{proof}

As an immediate consequence of the above lemma we get the following:

\begin{mythm}\label{thm liminf}
Let $f:M^2\to\real{3}$ be a translating soliton whose mean curvature satisfies $H>-1$.
Suppose that $H\ge 0$ outside a compact subset of $M^2$. Then either $H\equiv 0$
and $M=f(M^2)$ is isometric to a plane or $H>0$ on all of $M^2$. If, additionally, $M$ is properly
embedded then it is a graph and so it has genus zero.
\end{mythm}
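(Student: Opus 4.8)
The plan is to derive the statement as an immediate specialization of Lemma~\ref{meanconvex} to the case in which the set $\{x\in M^2:H(x)=-1\}$ is empty, so the only thing to check is that $f$ lies in the class $\mathscr{C}$. For this I would simply observe that the hypothesis $H>-1$ on all of $M^2$ forces $\{x\in M^2:H(x)=-1\}=\emptyset$; hence the requirement defining $\mathscr{C}$ --- the existence of a constant $\varepsilon>0$ with Gau\ss\ curvature $K\ge\varepsilon$ on $\{H=-1\}$ --- holds vacuously, this being exactly the case $\{H=-1\}=\emptyset$ that is explicitly permitted there. Thus $f\in\mathscr{C}$, and since by assumption $H\ge 0$ outside a compact subset of $M^2$, Lemma~\ref{meanconvex} applies verbatim and yields the dichotomy: either $H\equiv 0$ and $M=f(M^2)$ is isometric to a plane, or $H>0$ everywhere on $M^2$.

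For the last assertion assume in addition that $M$ is properly embedded. If $H\equiv 0$ then $M$ is a plane and in particular has genus zero. If $H>0$ everywhere, then by observation~(a) at the start of this subsection $M$ is, near each of its points, a graph over a portion of the plane $E$ perpendicular to $\v$; equivalently, the orthogonal projection $\pi$ of $M$ onto $E$ is a local diffeomorphism whose transversal normal component has a fixed sign. As in the final paragraph of the proof of Lemma~\ref{meanconvex} (which is where completeness enters), properness then makes $\pi$ a covering map onto an open subset of $E\cong\real{2}$, hence --- by simple connectedness of $E$ --- a diffeomorphism onto a domain $\Omega\subset E$; so $M$ is the graph of a function over $\Omega$, is diffeomorphic to the planar domain $\Omega$, and therefore has genus zero.

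I do not expect any genuine obstacle here, since all the analytic content is already contained in Lemma~\ref{meanconvex}: the maximum-principle argument for the test function $g=He^{\lambda u}$, the separate treatment of the interior minimum point according to whether $|\nabla u|>0$ or $|\nabla u|=0$ there (the latter being where $H=-1$ and the Gau\ss-curvature hypothesis would be invoked), and the choice of $\lambda$ making both cases close simultaneously. The present theorem merely removes the second case by making $\{H=-1\}$ empty. The one place that merits a careful sentence is the passage from ``locally a graph'' to ``globally a graph'' in the embedded case: $H>0$ rules out critical points of $\pi$ and fixes the orientation of the unit normal, and properness (together with completeness) promotes the resulting local diffeomorphism to a global diffeomorphism onto a planar domain.
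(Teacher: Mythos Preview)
Your proposal is correct and follows exactly the paper's approach: the paper introduces Theorem~C with the single sentence ``As an immediate consequence of the above lemma we get the following,'' so deducing it from Lemma~\ref{meanconvex} via the vacuous case $\{H=-1\}=\emptyset$ is precisely what is intended.

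One small caution on your elaboration of the graph assertion: the clause ``by simple connectedness of $E$'' does not do the work you want, since the image $\Omega=\pi(M)$ need not be simply connected a priori, and it is $\pi_1(\Omega)$, not $\pi_1(E)$, that controls whether a covering $\pi:M\to\Omega$ is trivial. The cleaner route---and the one implicit in the paper's unproved assertion at the end of Lemma~\ref{meanconvex}---is to argue injectivity of $\pi$ directly: a properly embedded orientable hypersurface separates $\real{3}$, and since $\langle\xi,\v\rangle=-H<0$ has constant sign, every vertical line crosses $M$ in the same direction at each intersection, which forces at most one intersection. This bypasses the covering-space step entirely.
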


Following the same strategy as in the proof of Lemma \ref{meanconvex} we can show the following:

\begin{lemma}
Let $f:M^2\to\real{3}$ be a translating soliton of the mean curvature flow. Suppose that
\begin{enumerate}[(a)]
\item
there exists a positive constant $\varepsilon$ such that
the Gau{\ss} curvature of $f$ satisfies $K\ge\varepsilon$ on the set
$\{x\in M^2:H(x)=-1\},$
\smallskip
\item
there exists a constant $\lambda\in (1-2\varepsilon,1)$ such that the function $e^{2\lambda u}H^2$ is bounded.
\smallskip
\item
the sub-level sets $M_c:=\{x\in M^2:u(x)\le c\}$, $c\in\real{}$, are compact.
\end{enumerate}
Then, $H>0$ on all of $M^2$.
\end{lemma}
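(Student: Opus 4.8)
The plan is to follow the strategy of the proof of Lemma \ref{meanconvex}; the new point is that hypothesis (b) will play the role that ``$H\ge0$ outside a compact set'' played there, guaranteeing that a certain infimum is attained, while hypothesis (c) supplies the compactness for this and a quick exclusion of the flat case. First I would record that, by (c), the height function $u$ is proper and bounded from below, it attains its infimum at an interior point of $M$, and $u(x_{n})\to+\infty$ whenever $x_{n}$ leaves every compact subset of $M$. Next, let $\lambda_{0}\in(1-2\varepsilon,1)$ be the exponent given by (b), fix some $\lambda\in(1-2\varepsilon,\lambda_{0})$, and set $g:=He^{\lambda u}$, as in Lemma \ref{meanconvex}. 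Then
\[
g^{2}=e^{2\lambda u}H^{2}=e^{-2(\lambda_{0}-\lambda)u}\cdot e^{2\lambda_{0}u}H^{2}\le B\,e^{-2(\lambda_{0}-\lambda)u},\qquad B:=\sup_{M}e^{2\lambda_{0}u}H^{2}<\infty,
\]
which is bounded (as $u$ is bounded below) and tends to $0$ at every end (as $u\to+\infty$ there); hence, for each $\eta>0$, the set $\{g^{2}\ge\eta\}$ is a closed subset of a compact sub-level set and so is compact. Note also that $\varepsilon\le\tfrac14$, since $K\le H^{2}/4$ wherever $H=-1$; thus $\lambda>1-2\varepsilon\ge\tfrac12$.

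The heart of the argument is the claim $\mu:=\inf_{M}g\ge0$, which is equivalent to $H\ge0$ on $M$. Suppose $\mu<0$. A minimizing sequence $x_{n}$ for $g$ satisfies $g(x_{n})^{2}\to\mu^{2}>0$, so it eventually lies in the compact set $\{g^{2}\ge\mu^{2}/4\}$ and therefore subconverges to a point $x_{0}$ with $g(x_{0})=\mu<0$; in particular $H(x_{0})<0$ and $x_{0}$ is an interior minimum of $g$. At $x_{0}$ we have $\nabla g=0$ and $\Delta g\ge0$, and the pointwise computation in the proof of Lemma \ref{meanconvex} applies verbatim: $\nabla H=-\lambda H\nabla u$ and $K|\nabla u|^{2}=\lambda(1-\lambda)H^{2}|\nabla u|^{2}$ at $x_{0}$, and $\Delta g\ge0$ yields
\[
0\le(1-\lambda)H\{\lambda|\nabla u|^{2}-H^{2}\}+2HK\qquad\text{at }x_{0}.
\]
If $|\nabla u|(x_{0})>0$, then $K(x_{0})=\lambda(1-\lambda)H^{2}>0$ and the right-hand side equals $(1-\lambda)H\{(2\lambda-1)H^{2}+\lambda|\nabla u|^{2}\}<0$ because $H(x_{0})<0$ and $\lambda>\tfrac12$ --- a contradiction. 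If $|\nabla u|(x_{0})=0$, then $H(x_{0})^{2}=1$ by Lemma \ref{lemm rel}\,(b), hence $H(x_{0})=-1$ and $K(x_{0})\ge\varepsilon$ by hypothesis (a); the displayed inequality then reduces to $2K(x_{0})\le1-\lambda$, so $\lambda\le1-2\varepsilon$, contradicting the choice of $\lambda$. Therefore $\mu\ge0$, i.e. $H\ge0$ on all of $M$.

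To finish, since $H\ge0$ and, by Lemma \ref{lemm rel}\,(f), $\Delta H+\langle\nabla H,\nabla u\rangle=-|A|^{2}H\le0$, the strong maximum principle gives $H\equiv0$ or $H>0$ on $M$. The first alternative is impossible: by Lemma \ref{lemm rel}\,(b) it would force $|\nabla u|\equiv1$, so $u$ would have no critical point, contradicting the fact --- noted above, from (c) --- that $u$ attains its minimum at an interior point of $M$. Hence $H>0$ on all of $M^{2}$. The one genuine obstacle beyond transcribing the argument of Lemma \ref{meanconvex} is the non-compactness of $M$: a priori $\inf_{M}g$ could be approached only along an end, and it is precisely to rule this out --- by making $g$ decay at the ends, after the harmless shrinking of $\lambda$ within $(1-2\varepsilon,1)$ --- that hypothesis (b) is used.
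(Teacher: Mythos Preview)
Your proof is correct and follows essentially the same approach as the paper: shrink the exponent from the given $\lambda_0$ to some $\lambda\in(1-2\varepsilon,\lambda_0)$ so that $g=He^{\lambda u}$ decays at the ends (via (b) and (c)), attain the infimum on a compact set, and then repeat verbatim the two-case computation from Lemma~\ref{meanconvex}; your write-up is in fact more detailed than the paper's, which simply refers back to that lemma. One very small quibble: your claim that $\varepsilon\le\tfrac14$ (hence $\lambda>\tfrac12$) is justified only when $\{H=-1\}\neq\emptyset$; if that set is empty, Case~2 never occurs, but you still need $\lambda\ge\tfrac12$ for Case~1, so you should note that one may harmlessly replace $\varepsilon$ by $\min(\varepsilon,\tfrac14)$ without affecting either hypothesis.
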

\begin{proof}
Choose a positive constant $\delta>0$ such that
$$\lambda-\frac{\delta}{2}>1-2\varepsilon.$$
Because by assumption the function $H^2e^{2\lambda u}$ is bounded, we deduce that
there exists a positive constant $C$ such that the following estimate
$$H^2e^{(2\lambda-\delta)u}\le Ce^{-\delta u}$$
holds. Because the height $u$ is unbounded from above we get that
$H^2e^{(2\lambda-\delta)u}$ tends to $0$ as $u$ goes to infinity.
Consider the function $g:M\to\real{}$ given by
$$g:=He^{(\lambda-\delta/2)u}.$$
Suppose that there exists a point $x\in M^2$ where $H< 0$. Proceeding as in the proof of Lemma \ref{meanconvex} we get a contradiction.
Thus, either $H\equiv 0$ and $f(M^2)$ is a plane or $H>0$. The first alternative is impossible because of assumption (c). Hence, $H$ must be positive everywhere.
\end{proof}

\begin{mythm}
Let $f:\Sigma_{g,k}=\Sigma_g\smi\{p_1,\dots,p_k\}\to\real{3}$ be a complete translating soliton that satisfies the following two conditions:
\begin{enumerate}[\rm(a)]
\item each end is either bounded from above or from below,
\smallskip
\item the mean curvature satisfies $H>-1$ everywhere and
$$\limsup_{x\to p_j}H^2(x)\le 1-\varepsilon$$
for some positive constant number $\varepsilon$ and for
any of the punctures $p_j\in\{p_1,\dots,p_k\}$. 
\end{enumerate}
Then $g\le 1$. If $f$ is an embedding, then $g=0$ and thus $\Sigma_{g,k}$ is a planar domain.
\end{mythm}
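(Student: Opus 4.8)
The plan is to obtain this theorem as a direct application of the general Gauß‑image result, Theorem \ref{gauss map}, taking for the fixed unit vector $\v\in\mathbb{S}^2$ the translating direction (the north pole) and for $u=\langle f,\v\rangle$ the usual height function of the soliton. Thus the whole argument consists of three moves: translate the quantitative assumptions (a) and (b) of the present theorem into hypotheses (a) and (b) of Theorem \ref{gauss map}; apply that theorem; and then discard its exceptional alternative using $H>-1$.

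For hypothesis (a) of Theorem \ref{gauss map} I would use Lemma \ref{lemm rel} (b), which gives the pointwise identity $|\nabla u|^2=1-H^2$. Combined with assumption (b) of the present theorem this yields, for each puncture $p_j$,
\[
\liminf_{x\to p_j}|\nabla u(x)|^2=1-\limsup_{x\to p_j}H^2(x)\ge 1-(1-\varepsilon)=\varepsilon ,
\]
so that $\liminf_{x\to p_j}|\nabla u|\ge\sqrt{\varepsilon}>0$, which is precisely hypothesis (a) of Theorem \ref{gauss map} with constant $\sqrt{\varepsilon}$. For hypothesis (b) of Theorem \ref{gauss map} I would invoke assumption (a) of the present theorem: if the $j$‑th end is bounded from above then $\limsup_{x\to p_j}u(x)<\infty$, and if it is bounded from below then $\liminf_{x\to p_j}u(x)>-\infty$; in either case the interval $\bigl(\liminf_{x\to p_j}u,\,\limsup_{x\to p_j}u\bigr)$ is a proper subinterval of $\R$, as required.

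Now Theorem \ref{gauss map} applies and gives the dichotomy: either the Gauß image $\xi(\Sigma_{g,k})$ contains the pair $\{\v,-\v\}$, or $g\le 1$ — and $g=0$ in case $f$ is an embedding. The first alternative is ruled out by the standing assumption $H>-1$: since $H=-\langle\xi,\v\rangle$ with $|\xi|=|\v|=1$, we have $H(x)=-1$ exactly when $\xi(x)=\v$, so $H>-1$ everywhere is equivalent to $\v\notin\xi(\Sigma_{g,k})$ (this is observation (b) at the beginning of the present subsection). In particular $\{\v,-\v\}\not\subset\xi(\Sigma_{g,k})$, so the only surviving conclusion is $g\le 1$, respectively $g=0$ when $f$ is an embedding. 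Finally, a complete translating soliton cannot be compact — its height function has no local maximum by Lemma \ref{lemm rel} (d) — hence $k\ge 1$; so in the embedded case $\Sigma_{g,k}$ is a $k$‑punctured sphere, i.e. a planar domain.

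The serious part of this argument — the spherical‑cap surgery along the ends (Lemma \ref{spherical}) and the degree‑theoretic count of the preimages of $\v$ under the Gauß map of the closed‑up surface — has already been carried out in proving Theorem \ref{gauss map}. Consequently I do not expect a real obstacle here; the only points needing care are the bookkeeping that converts the $\limsup H^2$ bound into a $\liminf|\nabla u|$ bound via $|\nabla u|^2=1-H^2$, and the remark that assumption (a) of the present theorem is exactly what prevents the interval $(\liminf u,\limsup u)$ from being all of $\R$ at each end. If anything at all is delicate it is checking that ``bounded from above or from below'' is read end‑by‑end (a single surface may have some ends going up and others going down), which is precisely the situation Theorem \ref{gauss map} was set up to treat.
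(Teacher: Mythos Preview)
Your proposal is correct and follows essentially the same approach as the paper: verify the hypotheses of Theorem~\ref{gauss map} via the identity $|\nabla u|^2=1-H^2$ and the end-by-end boundedness assumption, then use $H>-1$ to exclude $\v$ from the Gau\ss\ image. The paper's proof is in fact just a two-sentence sketch of exactly this argument; your write-up simply makes the bookkeeping explicit.
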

\begin{proof}
Recall that the height function $u=\langle f,\v\rangle$ satisfies the equation
$$|\nabla u|^2+H^2=1.$$
Since the mean curvature satisfies  $H>-1$, we deduce that $\v$ does not belong to the Gau{\ss} image of $f$. Hence the result follows
as a corollary of Theorem \ref{gauss map}.
\end{proof}

We will conclude this section with some interesting formulas relating the Gau\ss\ curvature $K$ of a translating soliton in $\real{3}$ with its
mean curvature $H$.
\begin{lemma}\label{lemm hk}
On a translating soliton in $\real{3}$ the Gau{\ss} curvature $K$ satisfies
the following equations to which we will refer to as the $(H,K)$-formulas. 
\begin{eqnarray}
K&=&\Delta\log\sqrt{1+H^2}-\frac{2}{(1+H^2)^2}|\nabla H|^2+\frac{H^4}{1+H^2}\label{hk1}\\
&=&\frac{H}{1+H^2}\Delta H-\frac{|\nabla H|^2}{1+H^2}+\frac{H^4}{1+H^2}\label{hk2}
\end{eqnarray}
Moreover, at each point where $H> 0$ we have
\begin{equation}\label{hk3}
K=\frac{H^2}{1+H^2}\Delta\log(e^uH)=\frac{H^2}{1+H^2}\left(\Delta\log H+H^2\right).
\end{equation}
\end{lemma}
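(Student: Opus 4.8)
The plan is to obtain all three identities directly from the equations collected in Lemma~\ref{lemm rel}, exploiting the fact that on a surface the Ricci tensor equals $K$ times the induced metric. As a preliminary I would record the Gau\ss\ equation in the form
\[
|A|^2 = H^2 - 2K,
\]
valid for any surface in $\real{3}$ (write $H=\kappa_1+\kappa_2$, $K=\kappa_1\kappa_2$, $|A|^2=\kappa_1^2+\kappa_2^2$).

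The core step is the derivation of \eqref{hk2}. Since $\operatorname{Ric}=K\gind$ in dimension two, Lemma~\ref{lemm rel}(g) together with Lemma~\ref{lemm rel}(b) gives
\[
K(1-H^2)=K|\nabla u|^2=\operatorname{Ric}(\nabla u,\nabla u)=-|\nabla H|^2-H\langle\nabla H,\nabla u\rangle.
\]
Next I would use Lemma~\ref{lemm rel}(f) to substitute $\langle\nabla H,\nabla u\rangle=-\Delta H-H|A|^2$ and then replace $|A|^2$ by $H^2-2K$. Collecting the terms proportional to $K$ on the left and dividing by $1+H^2$ yields exactly
\[
K=\frac{H\Delta H-|\nabla H|^2+H^4}{1+H^2},
\]
which is \eqref{hk2}.

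Formula \eqref{hk1} then follows from \eqref{hk2} by a routine differentiation. Setting $\phi:=1+H^2$ and using $\Delta\log\phi=\Delta\phi/\phi-|\nabla\phi|^2/\phi^2$ with $\nabla\phi=2H\nabla H$ and $\Delta\phi=2H\Delta H+2|\nabla H|^2$, one finds
\[
\Delta\log\sqrt{1+H^2}=\frac{H\Delta H+|\nabla H|^2}{1+H^2}-\frac{2H^2|\nabla H|^2}{(1+H^2)^2}.
\]
Adding the term $-\tfrac{2|\nabla H|^2}{(1+H^2)^2}$ combines the gradient contributions over the common denominator into $-\tfrac{2|\nabla H|^2}{1+H^2}$, and after adding $\tfrac{H^4}{1+H^2}$ the right-hand side of \eqref{hk1} turns into the right-hand side of \eqref{hk2}; hence it equals $K$. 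For \eqref{hk3} I would first note, from Lemma~\ref{lemm rel}(d) and (b), that $\Delta u=1-|\nabla u|^2=H^2$, so wherever $H>0$,
\[
\Delta\log(e^uH)=\Delta u+\Delta\log H=H^2+\frac{\Delta H}{H}-\frac{|\nabla H|^2}{H^2}.
\]
Multiplying by $H^2/(1+H^2)$ reproduces $\dfrac{H\Delta H-|\nabla H|^2+H^4}{1+H^2}=K$, which simultaneously gives both equalities in \eqref{hk3}.

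There is no genuine difficulty here: the whole lemma is a bookkeeping exercise, and the only point that needs care is the substitution $|A|^2=H^2-2K$, which is precisely what converts the generic soliton identities of Lemma~\ref{lemm rel} into closed formulas for $K$ in terms of $H$ alone. The hypothesis $H>0$ in \eqref{hk3} enters only to make $\log H$ and the divisions by $H$ and $H^2$ meaningful.
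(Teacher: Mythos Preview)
Your proposal is correct and uses the same ingredients as the paper: Lemma~\ref{lemm rel}(b),(d),(f),(g) together with $|A|^2=H^2-2K$. The only cosmetic difference is that the paper computes $\Delta\log\sqrt{1+H^2}$ first and reads off \eqref{hk1}, then notes that \eqref{hk2} is a reformulation, whereas you derive \eqref{hk2} first and then recover \eqref{hk1}; both routes to \eqref{hk3} are identical.
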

\begin{proof}
From Lemma \ref{lemm rel} (g), we have
\begin{equation}\label{ricci}
K|\nabla u|^2=\operatorname{Ric}(\nabla u,\nabla u)=-|\nabla H|^2-H\langle\nabla H,\nabla u\rangle.
\end{equation}
Taking into account the formula for $\Delta H$ in Lemma \ref{lemm rel} (f) we compute
\begin{eqnarray*}
\Delta\log\sqrt{1+H^2}&=&\frac{H}{1+H^2}\Delta H+\frac{1-H^2}{(1+H^2)^2}|\nabla H|^2\\
&=&-\,\frac{H}{1+H^2}\left(H|h|^2+\langle\nabla H,\nabla u\rangle\right)\\
&&+\frac{1-H^2}{(1+H^2)^2}|\nabla H|^2.
\end{eqnarray*}
Combining the last equality with (\ref{ricci}) and $|h|^2=H^2-2K$ we get
\begin{eqnarray*}
\Delta\log\sqrt{1+H^2}&=&-\frac{H^4}{1+H^2}+2K\frac{H^2}{1+H^2}\\
&&+\frac{1}{1+H^2}\left(K|\nabla u|^2+|\nabla H|^2\right)\\
&&+\frac{1-H^2}{(1+H^2)^2}|\nabla H|^2\\
&=&-\frac{H^4}{1+H^2}+K\frac{2H^2+|\nabla u|^2}{1+H^2}\\
&&+\frac{2}{(1+H^2)^2}|\nabla H|^2\\
&=&-\frac{H^4}{1+H^2}+K+\frac{2}{(1+H^2)^2}|\nabla H|^2.
\end{eqnarray*}
This completes the proof of (\ref{hk1}). Then (\ref{hk2}) is just a reformulation of (\ref{hk1}).
Finally, if $H>0$, then $\log H$ is well defined and (\ref{hk3}) follows from $\Delta u=H^2$ and
(\ref{hk2}).
\end{proof}

\begin{corollary}
Outside of the critical set of $u$, the vector field
$$W=-\frac{\nabla H+H\nabla u}{|\nabla u|^2}=-\frac{\nabla(e^uH)}{e^u|\nabla u|^2}$$
is divergence free. Moreover,
$$K=\langle\nabla H,W\rangle=\operatorname{div}(HW).$$
\end{corollary}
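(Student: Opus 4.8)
The plan is to prove the three assertions in the order: (1) the two displayed formulas for $W$ agree and $W$ is smooth off $\operatorname{Crit}(u)$; (2) the algebraic identity $K=\langle\nabla H,W\rangle$; (3) the vanishing $\operatorname{div}W=0$; and then (4) deduce $K=\operatorname{div}(HW)$ from the Leibniz rule. Step (1) is immediate: $\nabla(e^uH)=e^u(\nabla H+H\nabla u)$, and by Lemma \ref{lemm rel}(b) the denominator $|\nabla u|^2=1-H^2$ is strictly positive precisely on $M^2\smi\operatorname{Crit}(u)$, since $\operatorname{Crit}(u)=\{H^2=1\}$.

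For (2) I would use that in dimension two $\operatorname{Ric}(\nabla u,\nabla u)=K|\nabla u|^2$, so Lemma \ref{lemm rel}(g) reads $K|\nabla u|^2=-|\nabla H|^2-H\langle\nabla H,\nabla u\rangle=-\langle\nabla H,\nabla H+H\nabla u\rangle$ (this is exactly relation (\ref{ricci})). Dividing by $|\nabla u|^2$ yields $K=-\langle\nabla H,\nabla H+H\nabla u\rangle/|\nabla u|^2=\langle\nabla H,W\rangle$.

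The computational core is (3). Set $w:=|\nabla u|^2=1-H^2$, so that $\nabla w=-2H\nabla H$ and, by Lemma \ref{lemm rel}(d), $\Delta u=H^2$. Expanding with the Leibniz rule,
$$-\operatorname{div}W=\operatorname{div}(w^{-1}\nabla H)+\operatorname{div}(w^{-1}H\nabla u),$$
the first summand equals $w^{-1}\Delta H+2w^{-2}H|\nabla H|^2$ and the second equals $w^{-1}H^3+w^{-1}\langle\nabla H,\nabla u\rangle+2w^{-2}H^2\langle\nabla H,\nabla u\rangle$. Now substitute Lemma \ref{lemm rel}(f) in the form $\Delta H=-H^3+2HK-\langle\nabla H,\nabla u\rangle$ (using the Gauss equation $|A|^2=H^2-2K$): the terms $w^{-1}\big(\Delta H+\langle\nabla H,\nabla u\rangle\big)+w^{-1}H^3$ collapse to $2w^{-1}HK$, while $2w^{-2}H\big(|\nabla H|^2+H\langle\nabla H,\nabla u\rangle\big)=2w^{-2}H\cdot(-Kw)=-2w^{-1}HK$ by Lemma \ref{lemm rel}(g) again. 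These two contributions cancel, so $\operatorname{div}W=0$.

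Finally (4): $\operatorname{div}(HW)=\langle\nabla H,W\rangle+H\operatorname{div}W=\langle\nabla H,W\rangle=K$. I do not expect a genuine obstacle here; the only delicate point is the bookkeeping of the several $w^{-1}$ and $w^{-2}$ terms, and the single idea that makes it work is to re-express the recurring combination $|\nabla H|^2+H\langle\nabla H,\nabla u\rangle$ as $-K|\nabla u|^2$ via the two-dimensional Ricci identity before collecting terms, after which every term either cancels against its counterpart or telescopes thanks to $w=1-H^2$.
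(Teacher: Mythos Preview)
Your proof is correct in every step, and the bookkeeping in part (3) is carried out cleanly; the key observation that the combination $|\nabla H|^2+H\langle\nabla H,\nabla u\rangle$ collapses to $-K|\nabla u|^2$ via Lemma~\ref{lemm rel}(g) is exactly what makes the cancellation transparent. The paper does not supply a proof for this corollary, but your direct computation from the basic identities of Lemma~\ref{lemm rel} (parts (b), (d), (f), (g)) together with the two--dimensional Gauss relation $|A|^2=H^2-2K$ is precisely the natural argument one would expect.
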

\begin{remark}
Note that for the grim hyperplane with $\min u=0$ we have $e^uH=1$.
\end{remark}
% References
%%%%%%%%%%%%%%%%%%%%%%%%%%%%%%%%%%%%%%%%%%%%%%%%%%%%%%%%%%%%%%%%%%%%%%%%%
\begin{bibdiv}
\begin{biblist}

\bib{alexandrov}{article}{
   author={Alexandrov, A.D.},
   title={Uniqueness theorems for surfaces in the large},
   journal={Vestnik Leningrad Univ. Math.},
   volume={11},
   date={1956},
   %number={1},
   pages={5--17},
}

\bib{aw}{article}{
   author={Altschuler, S.},
   author={Wu, L.-F.},
   title={Translating surfaces of the non-parametric mean curvature flow
   with prescribed contact angle},
   journal={Calc. Var. Partial Differential Equations},
   volume={2},
   date={1994},
   %number={1},
   pages={101--111},
}

\bib{angenent}{article}{
   author={Angenent, S.},
   title={On the formation of singularities in the curve shortening flow},
   journal={J. Differential Geom.},
   volume={33},
   date={1991},
   %number={3},
   pages={601--633},
   %issn={0022-040X},
   %review={\MR{1100205 (92c:58016)}},
}

\bib{css}{article}{
   author={Clutterbuck, J.},
   author={Schn{\"u}rer, O.},
   author={Schulze, F.},
   title={Stability of translating solutions to mean curvature flow},
   journal={Calc. Var. Partial Differential Equations},
   volume={29},
   date={2007},
   %number={3},
   pages={281--293},
}

\bib{eschenburg}{article}{
   author={Eschenburg, J.-H.},
   title={Maximum principle for hypersurfaces},
   journal={Manuscripta Math.},
   volume={64},
   date={1989},
   %number={1},
   pages={55--75},
   %issn={0025-2611},
   %review={\MR{994381 (90c:53134)}},
   %doi={10.1007/BF01182085},
}

\bib{ferus}{article}{
   author={Ferus, D.},
   title={On the type number of hypersurfaces in spaces of constant
   curvature. },
   journal={Math. Ann.},
   volume={187},
   date={1970},
   pages={310--316},
   %issn={0025-5831},
   %review={\MR{0266106 (42 \#1015)}},
}

\bib{grayson}{article}{
   author={Grayson, M.A.},
   title={The heat equation shrinks embedded plane curves to round points},
   journal={J. Differential Geom.},
   volume={26},
   date={1987},
   %number={2},
   pages={285--314},
%   issn={0022-040X},
%   review={\MR{906392 (89b:53005)}},
}

\bib{halldorsson}{article}{
   author={Halldorsson, H.P.},
   title={Helicoidal surfaces rotating/translating under the mean curvature
   flow},
   journal={Geom. Dedicata},
   volume={162},
   date={2013},
   pages={45--65},
   %issn={0046-5755},
   %review={\MR{3009534}},
   %doi={10.1007/s10711-012-9716-2},
}

\bib{hamilton}{article}{
   author={Hamilton, R.},
   title={Four-manifolds with positive curvature operator},
   journal={J. Differential Geom.},
   volume={24},
   date={1986},
   %number={2},
   pages={153--179},
   %issn={0022-040X},
   %review={\MR{862046 (87m:53055)}},
}

\bib{huisken}{article}{
   author={Huisken, G.},
   title={Local and global behaviour of hypersurfaces moving by mean
   curvature},
   conference={
      title={Differential geometry: partial differential equations on
      manifolds (Los Angeles, CA, 1990)},
   },
   book={
      series={Proc. Sympos. Pure Math.},
      volume={54},
      publisher={Amer. Math. Soc.},
      place={Providence, RI},
   },
   date={1993},
   pages={175--191},
   %review={\MR{1216584 (94c:58037)}},
}

\bib{hopf}{book}{
   author={Hopf, H.},
   title={Differential geometry in the large},
   series={Lecture Notes in Mathematics},
   volume={1000},
   %note={Notes taken by Peter Lax and John Gray;
   %With a preface by S. S. Chern},
   publisher={Springer-Verlag},
   place={Berlin},
   date={1983},
   %pages={vii+184},
   %isbn={3-540-12004-1},
   %review={\MR{707850 (85b:53001)}},
}

\bib{ilmanen}{article}{
   author={Ilmanen, T.},
   title={Elliptic regularization and partial regularity for motion by mean
   curvature},
   journal={Mem. Amer. Math. Soc.},
   volume={108},
   date={1994},
   %number={520},
   %pages={x+90},
   %issn={0065-9266},
   %review={\MR{1196160 (95d:49060)}},
   %doi={10.1090/memo/0520},
}

\bib{lawson}{article}{
   author={Lawson, H.B.},
   title={Local rigidity theorems for minimal hypersurfaces},
   journal={Ann. of Math. (2)},
   volume={89},
   date={1969},
   pages={187--197},
   %issn={0003-486X},
   %review={\MR{0238229 (38 \#6505)}},
}

\bib{m}{book}{
   author={Milnor, J.},
   title={Morse theory},
   series={Based on lecture notes by M. Spivak and R. Wells. Annals of
   Mathematics Studies, No. 51},
   publisher={Princeton University Press},
   place={Princeton, N.J.},
   date={1963},
   %pages={vi+153},
}

\bib{milnor}{book}{
   author={Milnor, J.},
   title={Topology from the differentiable viewpoint},
   %series={Based on notes by David W. Weaver},
   publisher={The University Press of Virginia, Charlottesville, Va.},
   date={1965},
   %pages={ix+65},
   %review={\MR{0226651 (37 \#2239)}},
}
%
%\bib{montiel}{book}{
%   author={Montiel, S.},
%   author={Ros, A.},
%   title={Curves and surfaces},
%   series={Graduate Studies in Mathematics},
%   volume={69},
%   edition={2},
%   %note={Translated from the 1998 Spanish original by Montiel and edited by
%   %Donald Babbitt},
%   publisher={American Mathematical Society, Providence, RI; Real Sociedad
%   Matem\'atica Espa\~nola, Madrid},
%   date={2009},
%   %pages={xvi+376},
%   %isbn={978-0-8218-4763-3},
%   %review={\MR{2522595 (2010f:53007)}},
%}

\bib{nadirashvili}{article}{
   author={Nadirashvili, N.},
   title={Hadamard's and Calabi-Yau's conjectures on negatively curved and
   minimal surfaces},
   journal={Invent. Math.},
   volume={126},
   date={1996},
   %number={3},
   pages={457--465},
   %issn={0020-9910},
   %review={\MR{1419004 (98d:53014)}},
   %doi={10.1007/s002220050106},
}

\bib{nguyen}{article}{
   author={Nguyen, X.-H.},
   title={Complete embedded self-translating surfaces under mean curvature
   flow},
   journal={J. Geom. Anal.},
   volume={23},
   date={2013},
   %number={3},
   pages={1379--1426},
   %issn={1050-6926},
   %review={\MR{3078359}},
   %doi={10.1007/s12220-011-9292-y},
}

\bib{nguyen1}{article}{
   author={Nguyen, X.-H.},
   title={Translating tridents},
   journal={Comm. Partial Differential Equations},
   volume={34},
   date={2009},
   %number={1-3},
   pages={257--280},
   %issn={0360-5302},
   %review={\MR{2512861 (2010c:53100)}},
   %doi={10.1080/03605300902768685},
}

\bib{sampson}{article}{
   author={Sampson, J.H.},
   title={Some properties and applications of harmonic mappings},
   journal={Ann. Sci. \'Ecole Norm. Sup. (4)},
   volume={11},
   date={1978},
   %number={2},
   pages={211--228},
   %issn={0012-9593},
   %review={\MR{510549 (80b:58031)}},
}

\bib{savas}{article}{
   author={Savas-Halilaj, A.},
   author={Smoczyk, K.},
   title={Bernstein theorems for length and area decreasing minimal maps},
   journal={Calc. Var. Partial Differential Equations},
   volume={online first},
   date={2013},
   %number={3},
   pages={1--29},
}

\bib{schoen}{article}{ 
    author={Schoen, R.M.},
    title={Uniqueness, symmetry, and embeddedness of minimal surfaces},
    journal={J. Differential Geom.},
    volume={18},
    date={1984},
    %number= {4},
    pages={791--809}
}

\bib{simons}{article}{
   author={Simons, J.},
   title={Minimal varieties in riemannian manifolds},
   journal={Ann. of Math. (2)},
   volume={88},
   date={1968},
   pages={62--105},
   %issn={0003-486X},
   %review={\MR{0233295 (38 \#1617)}},
}

\bib{spivak}{book}{
   author={Spivak, M.},
   title={A comprehensive introduction to differential geometry. Vol. IV},
   edition={2},
   publisher={Publish or Perish, Inc., Wilmington, Del.},
   date={1979},
   %pages={viii+561},
   %isbn={0-914098-83-7},
   %review={\MR{532833 (82g:53003d)}},
}

\bib{w}{article}{
   author={Wang, X.-J.},
   title={Convex solutions to the mean curvature flow},
   journal={Ann. of Math. (2)},
   volume={173},
   date={2011},
   %number={3},
   pages={1185--1239},
}

\end{biblist}
\end{bibdiv}

\end{document}